\tikzset{snaked/.style = {decorate, decoration=snake}}
\tikzset{sdot/.style = {fill, circle, inner sep = 1.5pt}}
\newtheorem{theorem}{Theorem}[section]
\newtheorem{lemma}[theorem]{Lemma}
\newtheorem{prop}[theorem]{Proposition}
\newtheorem{claim}[theorem]{Claim}
\newtheorem*{theorem*}{Theorem}
\newtheorem*{corollary*}{Corollary}
\newtheorem*{lemma*}{Lemma}
\newtheorem*{prop*}{Proposition}
\newtheorem*{fact*}{Fact}
\newtheorem*{claim*}{Claim}
\theoremstyle{definition}
\newtheorem{example}[theorem]{Example}
\newtheorem*{example*}{Example}
\newtheorem*{defn*}{Definition}
\newtheorem*{remark*}{Remark}
\numberwithin{equation}{section}
\newcommand{\floor}[1]{\left\lfloor #1 \right\rfloor}
\newcommand{\ceil}[1]{\left\lceil #1 \right\rceil}
\newcommand{\sfloor}[1]{\lfloor #1 \rfloor}
\newcommand{\sceil}[1]{\lceil #1 \rceil}
\newcommand{\sdbold}[1]{\textbf{\textsf{#1}}}
\newcommand{\abs}[1]{\left\lvert #1 \right\rvert}
\newcommand{\sabs}[1]{\lvert #1 \rvert}
\newcommand{\EE}{\mathbb E}
\newcommand{\NN}{\mathbb N}
\newcommand{\QQ}{\mathbb Q}
\newcommand{\RR}{\mathbb R}
\newcommand{\ZZ}{\mathbb Z}
\newcommand{\cC}{\mathcal C}
\newcommand{\cP}{\mathcal P}
\newcommand{\cR}{\mathcal R}
\newcommand{\cS}{\mathcal S}
\newcommand{\cT}{\mathcal T}
\newcommand{\eps}{\varepsilon}
\newcommand{\case}[2]{\textcolor{MediumPurple3!80}{\sdbold{Case #1} (#2).}}
\newcommand{\casetext}[1]{\textcolor{MediumPurple3!80}{\sdbold{#1}}}
\title{Exponents in the local properties problem for difference sets have a gap at 2}
\author{Sanjana Das}
\date{January 19, 2025}
\address{Department of Mathematics, Massachusetts Institute of Technology, Cambridge, MA 02139}
\email{sanjanad@mit.edu}
\begin{document}

\begin{abstract}
    We study the local properties problem for difference sets: If we have a large set of real numbers and know that every small subset has many distinct differences, to what extent must the \emph{entire} set have many distinct differences? More precisely, we define $g(n, k, \ell)$ to be the minimum number of differences in an $n$-element set with the `local property' that every $k$-element subset has at least $\ell$ differences; we study the asymptotic behavior of $g(n, k, \ell)$ as $k$ and $\ell$ are fixed and $n \to \infty$. 

    The \emph{quadratic threshold} is the smallest $\ell$ (as a function of $k$) for which $g(n, k, \ell) = \Omega(n^2)$; its value is known when $k$ is even. In this paper, we show that for $k$ even, when $\ell$ is one below the quadratic threshold, we have $g(n, k, \ell) = O(n^c)$ for an absolute constant $c < 2$ --- i.e., at the quadratic threshold, the `exponent of $n$ in $g(n, k, \ell)$' jumps by a constant independent of $k$. 
\end{abstract}

\maketitle

\section{Introduction}

\subsection{Background on local properties problems}

There is a long history of studying problems with the following form: If we have a large object and we know that every small piece of it is `unstructured' in some sense, how unstructured must the \emph{entire} object be? In other words, to what extent can we go from a \emph{local} property about lack of structure to a \emph{global} one? 

Erd\H{o}s and Shelah \cite[Section V]{Erd75} initiated the study of a local properties problem for graphs. Here, the large object is an edge-coloring of a complete graph, and the small pieces are constant-sized induced subgraphs; we think of an edge-coloring as unstructured if it contains many distinct colors. To formalize this, we define $f(n, k, \ell)$ to be the minimum number of colors needed to edge-color $K_n$ such that every induced subgraph $K_k$ contains at least $\ell$ colors. This can be viewed as a generalization of Ramsey numbers, which correspond to the case $\ell = 2$ --- the $t$-color Ramsey number $r_t(k)$ is the smallest $n$ for which it is not possible to edge-color $K_n$ with $t$ colors such that every $K_k$ contains at least two colors, i.e., the smallest $n$ for which $f(n, k, 2) > t$. 

When studying $f(n, k, \ell)$, we typically think of $k$ and $\ell$ as constants, and study the asymptotic behavior of $f(n, k, \ell)$ as $n \to \infty$. One direction to approach this problem from is to search for thresholds: If we fix $k$ and increase $\ell$ from $1$ to $\binom{k}{2}$, then $f(n, k, \ell)$ will increase from $1$ to $\binom{n}{2}$, and we can ask for the value of $\ell$ at which $f(n, k, \ell)$ begins to exhibit a certain behavior, which we call the \emph{threshold} for that behavior. In \cite{EG97}, Erd\H{o}s and Gy\'arf\'as introduced the following thresholds.
\begin{itemize}
    \item The \emph{polynomial threshold} is the smallest $\ell$ (as a function of $k$) for which $f(n, k, \ell) = \Omega(n^\eps)$ for some $\eps > 0$ (possibly depending on $k$). 
    \item The \emph{linear threshold} is the smallest $\ell$ for which $f(n, k, \ell) = \Omega(n)$. 
    \item The \emph{superlinear threshold} is the smallest $\ell$ for which $f(n, k, \ell) = \omega(n)$. 
    \item The \emph{quadratic threshold} is the smallest $\ell$ for which $f(n, k, \ell) = \Omega(n^2)$. 
\end{itemize}

Erd\H{o}s and Gy\'arf\'as \cite{EG97} exactly determined the linear and quadratic thresholds --- they showed that the linear threshold is $\binom{k}{2} - k + 3$ and the quadratic threshold is $\binom{k}{2} - \sfloor{k/2} + 2$. They also showed that the polynomial threshold is at most $k$;  Conlon, Fox, Lee, and Sudakov \cite{CFLS15} later proved that this is tight. S\'ark\"ozy and Selkow \cite{SS01} showed that the superlinear threshold is at most $\binom{k}{2} - k + \sceil{\log k} + 3$.

Erd\H{o}s and Gy\'arf\'as \cite{EG97} also proved a general upper bound on $f(n, k, \ell)$ using a random construction; works including \cite{PS19, FPS20, BEHK23} have proved several families of lower bounds, which nearly match this upper bound in many cases. 

Erd\H{o}s \cite{Erd86} also posed a similar local properties problem for distinct distances. Here, the large object is a set of points in $\RR^2$, and we think of a set of points as unstructured if it spans many distinct distances. So we define $\phi(n, k, \ell)$ as the minimum number of distinct distances that $n$ points in $\RR^2$ can span, given that every $k$ points span at least $\ell$ distinct distances. 

We always have $\phi(n, k, \ell) \geq f(n, k, \ell)$ --- we can convert any configuration of points into a graph on those points where the color of an edge represents the distance between its two endpoints --- and in many regimes, this is the best known lower bound for $\phi(n, k, \ell)$. However, there are a few cases where we are able to make use of the geometric structure to prove better lower bounds: Fox, Pach, and Suk \cite{FPS18} proved that $\phi(n, k, \binom{k}{2} - k + 6) = \Omega(n^{8/7 - o(1)})$. (Meanwhile, we only know that $f(n, k, \binom{k}{2} - k + 6) = \Omega(n)$.) 

It is also possible to approach this problem in terms of thresholds (in particular, Fox, Pach, and Suk comment on the linear and quadratic thresholds in \cite[Section 1]{FPS18}), but such thresholds are much farther from being understood than those for $f(n, k, \ell)$, at least for general values of $k$. Erd\H{o}s also posed several problems about determining $\phi(n, k, \ell)$ for small values of $k$ and $\ell$; in particular, in \cite{Erd86} he asked whether $g(n, 4, 5) = \Omega(n^2)$. Recently Tao \cite{Tao24} proved that the answer is no, which means that when $k = 4$, the quadratic threshold is $6$. See \cite[Section 7]{She18} for more about this problem.

\subsection{A local properties problem for difference sets}

We study an arithmetic local properties problem, first described in \cite{PS19}. Here, our large object is a set of numbers, and we think of a set of numbers as unstructured if it contains many distinct differences. More formally, for a set $A \subseteq \RR$, we define the \emph{difference set} of $A$ to be \[A - A = \{\abs{a - b} \mid a, b \in A, \, a \neq b\}.\] We only include positive differences in $A - A$; this is nonstandard but more natural for this problem. 

We define $g(n, k, \ell)$ to be the minimum value of $\abs{A - A}$ over all $n$-element sets $A \subseteq \RR$ with the property that every $k$-element subset $A' \subseteq A$ satisfies $\sabs{A' - A'} \geq \ell$, which we refer to as the \emph{$(k, \ell)$-local property}. We think of $k$ and $\ell$ as constants, and study the asymptotic behavior of $g(n, k, \ell)$ as $n \to \infty$. (We use standard asymptotic notation throughout this paper; all asymptotic notation is as $n \to \infty$, and the implicit constants may depend on $k$, $\ell$, and any other relevant parameters.)

This problem can be viewed as a one-dimensional version of the local properties problem for distinct distances --- in particular, we have \begin{equation}
    g(n, k, \ell) \geq \phi(n, k, \ell) \geq f(n, k, \ell) \label{eqn:g-phi-f}
\end{equation}
(since any collection of real numbers $A \subseteq \RR$ can be viewed as a set of points in $\RR^2$). 

Note that any $m$-element set $A$ satisfies $m - 1 \leq \abs{A - A} \leq \binom{m}{2}$. This means that the problem is only interesting when $k - 1 \leq \ell \leq \binom{k}{2}$ (the $(k, \ell)$-local property is vacuous for smaller values of $\ell$, and impossible to satisfy for larger values of $\ell$). Also, if we increase $\ell$ from $k - 1$ to $\binom{k}{2}$, then $g(n, k, \ell)$ will increase from $n - 1$ to $\binom{n}{2}$ (as long as $k \geq 4$). So the most natural thresholds to consider are the superlinear and quadratic threshold, both introduced by Li \cite{Li22} (defined analogously to the corresponding thresholds for $f$). 
\begin{itemize}
    \item The \emph{superlinear threshold} is the smallest $\ell$ (as a function of $k$) for which $g(n, k, \ell) = \omega(n)$. 
    \item The \emph{quadratic threshold} is the smallest $\ell$ for which $g(n, k, \ell) = \Omega(n^2)$. 
\end{itemize}

\subsection{Previous work}

Because of \eqref{eqn:g-phi-f}, all lower bounds for $f$ or $\phi$ are also lower bounds for $g$. However, it turns out that we know significantly stronger lower bounds for $g$ than for $f$ or $\phi$. (In the opposite direction, apart from a few cases with $k \leq 4$, nearly all the nontrivial \emph{upper} bounds we know on $\phi$ come from upper bounds on $g$, while Erd\H{o}s and Gy\'arf\'as's random construction in \cite{EG97} gives much better upper bounds on $f$.)

The first lower bounds specific to $g$ were proven by Fish, Pohoata, and Sheffer \cite{FPS20}, who showed that for all $r \geq 2$ and $k$ divisible by $2r$, we have
\begin{equation}
    g\left(n, k, \binom{k}{2} - \frac{(r - 1)(r + 2)}{2}\binom{k/r}{2} + 1\right) = \Omega(n^{\frac{r}{r - 1} \cdot \frac{k - 2r}{k}}). \label{eqn:fps-lower}
\end{equation}
For example, when $r = 2$, this states that for $4 \mid k$ we have $g(n, k, k^2/4 + 1) = \Omega(n^{2 - 8/k})$. More generally, if we think of $r$ as fixed and $k$ as reasonably large, \eqref{eqn:fps-lower} gives a family of lower bounds on $g(n, k, \ell)$ for values of $\ell$ roughly between $7k^2/32$ and $k^2/4$. (For comparison, such values of $\ell$ are well below the linear threshold for $f$, which occurs at $\binom{k}{2} - k + 3$.)

Fish, Pohoata, and Sheffer also proved an upper bound for $g(n, k, \ell)$ for small values of $\ell$: They showed that for every $\eps > 0$, there exists $a > 0$ such that for all sufficiently large $k$, we have 
\begin{equation}
    g(n, k, ak(\log k)^{1/4 - \eps}) = n \cdot 2^{O(\sqrt{\log n})} = n^{1 + o(1)}. \label{eqn:fps-3ap-upper}
\end{equation} 
This comes from a result in additive combinatorics that for such values of $\ell$, any $k$-element set with fewer than $\ell$ differences must contain a $3$-AP (in this paper, we use `$k$-AP' as an abbreviation for `$k$-term arithmetic progression'). So any $3$-AP-free set satisfies the $(k, \ell)$-local property; the bound then follows by using the construction of $3$-AP-free sets due to Behrend \cite{Beh46}. 

Fish, Lund, and Sheffer \cite{FLS19} then proved the upper bound \[g\left(n, k, \frac{k^{\log_2 3} - 1}{2}\right) = O(n^{\log_2 3}).\] Their construction was roughly an affine $t$-cube (a set of the form $\{a + \eps_1d_1 + \cdots + \eps_td_t \mid \eps_i \in \{0, 1\}\}$ for fixed $a$, $d_1$, \ldots, $d_t$) with $t \approx \log_2 n$. 

Li \cite{Li22} then introduced and studied the superlinear and quadratic thresholds. She proved that the superlinear threshold is exactly $k$; the only $k$-element subsets that violate the $(k, k)$-local property are $k$-APs, so this result states that any set with only linearly many differences must contain a $k$-AP. 

Li also proved that if $k$ is even, then
\begin{equation}
    g\left(n, k, \frac{3k^2}{8} - \frac{3k}{4} + 2\right) = \Omega(n^2), \label{eqn:li-quadratic}
\end{equation}
which means the quadratic threshold is at most $3k^2/8 - 3k/4 + 2$. The idea of the proof is that a set with few distinct differences must contain a difference repeated many times; then if we take $k/2$ pairs of numbers with that difference, they form a $k$-element subset with at most $3k^2/8 - 3k/4 + 1$ differences, violating the $(k, \ell)$-local property for $\ell = 3k^2/8 - 3k/4 + 2$. 

Li also proved several `intermediate' lower bounds for $g(n, k, \ell)$ --- in particular, she showed that if $k$ is divisible by $8$, then 
\begin{equation}
    g\left(n, k, \frac{9k^2}{32} - \frac{9k}{16} + 5\right) = \Omega(n^{4/3}), \label{eqn:li-4-3-upper}
\end{equation}
and if $k \geq 8$ is a power of $2$, then
\begin{equation}
    g\left(n, k, \frac{k^{\log_2 3} + 1}{2}\right) = \Omega(n^{1 + \frac{2}{k - 2}}). \label{eqn:li-pow-of-2}
\end{equation}
The proofs of both bounds can be interpreted as showing that a set with few distinct differences must contain many congruent affine $t$-cubes, and a $k$-element set consisting of $k/2^t$ congruent affine $t$-cubes violates the $(k, \ell)$-local property for the chosen values of $\ell$. (For \eqref{eqn:li-4-3-upper} we take $t = 2$, and for \eqref{eqn:li-pow-of-2} we take $t = \log_2 k - 1$; we can view the proof of \eqref{eqn:li-quadratic} as the same argument with $t = 1$.) 

Finally, Li also proved an upper bound for small values of $\ell$, using a random construction: She showed that for every $a \geq 2$, for all sufficiently large $k$ we have 
\begin{equation}
    g(n, k, ak + 1) = O(n^{1 + \frac{a^2 + 1}{k}}). \label{eqn:li-upper-small-l}
\end{equation} 

In \cite{Das23}, the author further studied the quadratic threshold. For $k$ even, they determined it exactly --- they showed that we have
\begin{align}
    g\left(n, k, \frac{k^2}{4} + 1\right) &= \Omega(n^2), \label{eqn:das-quadratic-lower} \\
    g\left(n, k, \frac{k^2}{4}\right) &= o(n^2), \label{eqn:das-quadratic-upper}
\end{align}
which means the quadratic threshold is exactly $k^2/4$. For $k$ odd, they determined the quadratic threshold up to a constant-length window --- they showed that
\begin{align}
    g\left(n, k, \frac{(k + 1)^2}{4}\right) &= \Omega(n^2), \label{eqn:das-odd-quadratic-lower} \\
    g\left(n, k, \frac{(k + 1)^2}{4} - 4\right) &= o(n^2). \label{eqn:das-odd-quadratic-upper}
\end{align}
The lower bounds \eqref{eqn:das-quadratic-lower} and \eqref{eqn:das-odd-quadratic-lower} come from showing that a set with few distinct differences must contain a \emph{sum} repeated many times, and a $k$-element set formed by taking many pairs with the same sum violates the $(k, \ell)$-local property. The upper bounds \eqref{eqn:das-quadratic-upper} and \eqref{eqn:das-odd-quadratic-upper} come from a random construction. 

In \cite{Das23}, the author also proved a family of `intermediate' lower and upper bounds --- they showed that 
\begin{equation}
    g\left(n, k, \frac{3^{t - 1}}{4^t}k^2 + \frac{3^{t - 1} + 1}{2}\right) = \Omega(n^{1 + \frac{1}{2^t - 1}}) \label{eqn:das-interm-lower}
\end{equation} 
for all integers $t \geq 1$, and
\begin{equation}
    g\left(n, k, \ceil{\frac{(c - 1)(k - 1)}{c}}^2\right) = o(n^c) \label{eqn:das-interm-upper}
\end{equation}
for all $1 < c \leq 2$. The proof of the lower bound \eqref{eqn:das-interm-lower} combines ideas from Li's proof of \eqref{eqn:li-4-3-upper} and \eqref{eqn:li-pow-of-2} with ones from the proof of \eqref{eqn:das-quadratic-lower} --- a set with few distinct differences must contain $k/2^{t - 1}$ disjoint affine $(t - 1)$-cubes whose centers form pairs with equal sums. The proof of the upper bound \eqref{eqn:das-interm-upper} again comes from a random construction. 

If we think of $t$ and $c$ as fixed and $k$ as reasonably large, the values of $\ell$ in both \eqref{eqn:das-interm-lower} and \eqref{eqn:das-interm-upper} are quadratic in $k$, and the exponents of $n$ are constants greater than $1$. So one interpretation of these bounds is that for all $1 < c \leq 2$, the \emph{threshold for $\Omega(n^c)$} --- i.e., the smallest $\ell$ for which $g(n, k, \ell) = \Omega(n^c)$ --- is `quadratic in $k$.' (More precisely, for each $1 < c \leq 2$, there exist $a_1, a_2 > 0$ such that for all large $k$, the threshold for $\Omega(n^c)$ is between $a_1k^2$ and $a_2k^2$. Furthermore, these constants can be chosen such that as $c \to 1$, we have $a_1, a_2 \to 0$.)

\begin{figure}[ht]
    \centering
    \begin{tikzpicture}[scale = 1.1]
        \draw [Latex-Latex, gray!70] (-0.5, 0) -- (12.5, 0);
        \draw [Latex-Latex, gray!70] (0, -0.5) -- (0, 4.25);
        
        \foreach \i\j in {1/2, 1/4, 3/16, 9/64, 27/256, 2/9} {
            \draw [gray!70] (24*\i/\j, 0.15) -- (24*\i/\j, -0.15) node [anchor = north] {$\frac{\i}{\j}$};
        }
        \foreach \i in {2, 1} {
            \draw [gray!70] (0.15, {3.5*(\i - 1) + 0.25}) -- (-0.15, {3.5*(\i - 1) + 0.25}) node [anchor = east] {$\i$};
        }
        \foreach \i\j in {3/2, 4/3, 8/7} {
            \draw [gray!70] (0.15, {3.5*(\i/\j - 1) + 0.25}) -- (-0.15, {3.5*(\i/\j - 1) + 0.25}) node [anchor = east] {$\frac{\i}{\j}$};
        }
        \draw [gray!50, dashed] (0, 0.25) -- (12, 0.25);
        \draw [gray!50, dashed] (0, 3.75) -- (12, 3.75);
        \draw [gray!50, dashed] (12, 0) -- (12, 3.75);
        \begin{scope}[yshift = 0.25cm, yscale = 3.5, yshift = -1cm, xscale = 24]
            \coordinate (a0) at (1/4, 1.987);
            \coordinate (b0) at (1/4, 3/2);
            \coordinate (a1) at (2/9, 3/2);
            \coordinate (b1) at (2/9, 4/3);
            \foreach \i in {2, 3, ..., 10} {
                \coordinate (a\i) at ({1/3*(3/4)^\i}, {1 + 1/(2^\i - 1)});
                \coordinate (b\i) at ({1/3*(3/4)^\i}, {1 + 1/(2*2^\i - 1)});
            }
            \coordinate (a11) at (0, 1);
            \fill [DarkSlateGray3!50!MediumPurple3!10] (a0) -- (b0) -- (a1) -- (b1) -- (a2) -- (b2) -- (a3) -- (b3) -- (a4) -- (b4) -- (a5) -- (b5) -- (a6) -- (b6) -- (a7) -- (b7) -- (a8) -- (b8) -- (a9) -- (b9) -- (a10) -- (b10) -- (a11) [domain = 1:2, samples = 20] plot({(\x - 1)^2/(\x^2)}, \x);
            \draw [MediumPurple3!80, very thick] (1/2, 1.987) -- (a0);
            \draw [MediumPurple4, very thick] (a0) -- (b0) -- (a1);
            \draw [MediumPurple3!80, very thick] (a1) -- (b1) -- (a2) -- (b2) -- (a3) -- (b3) -- (a4) -- (b4) -- (a5) -- (b5) -- (a6) -- (b6) -- (a7) -- (b7) -- (a8) -- (b8) -- (a9) -- (b9) -- (a10) -- (b10) -- (a11);
            \draw [DarkSlateGray3, very thick] (1/2, 2) -- (1/4, 2) [domain = 2:1, samples = 20] plot({(\x - 1)^2/(\x^2)}, \x);
        \end{scope}
    \end{tikzpicture}
    \caption{A plot of the bounds we know on $g(n, k, \ell)$ in the regime where $\ell$ is quadratic in $k$, where the $x$-axis depicts the coefficient of $k^2$ in $\ell$ and the $y$-axis depicts the exponent of $n$ in the bound --- a point $(a, c)$ means that for $\ell \approx ak^2$ we have a bound of roughly $n^c$. The purple line represents lower bounds --- for $2/9 < a < 1/4$ the best lower bound comes from \eqref{eqn:fps-lower} with $r = 3$, and for other values of $a$ the best lower bound comes from \eqref{eqn:das-interm-lower}. The blue line represents upper bounds, which come from \eqref{eqn:das-interm-upper}.} \label{fig:previous-bounds}
\end{figure}
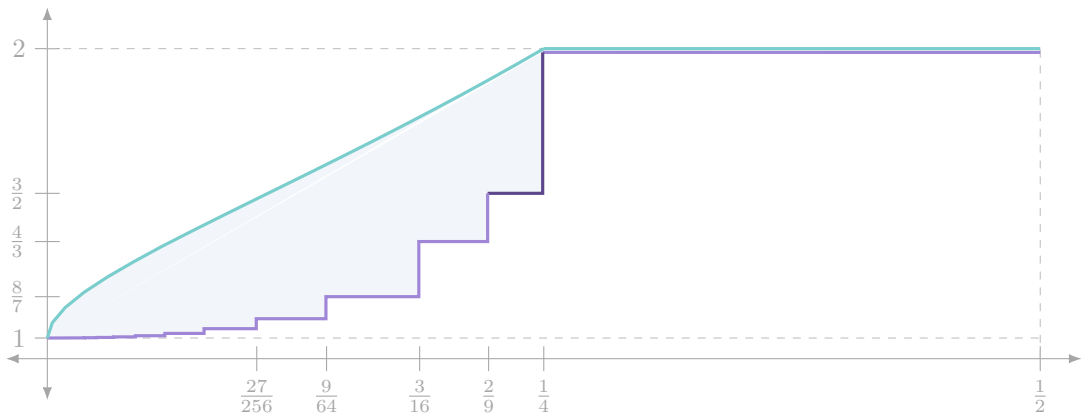

\subsection{Our question and result}

Once we understand a threshold for $g$, it is natural to consider what the transition in the behavior of $g$ `looks like' at that threshold. For example, Li \cite{Li22} showed that the superlinear threshold is $k$, so it is natural to ask how \emph{much} faster than linear $g(n, k, \ell)$ grows once $\ell$ crosses this threshold. Behrend's construction of $3$-AP-free (and therefore $k$-AP-free) sets with $n^{1 + o(1)}$ differences shows that for any $k \geq 3$, we have $g(n, k, k) = n^{1 + o(1)}$. So when we cross the superlinear threshold, $g(n, k, \ell)$ changes from a function which is linear in $n$ to one which is not linear, but still only $n^{1 + o(1)}$. Furthermore, \eqref{eqn:fps-3ap-upper} means that when $k$ is large, $g(n, k, \ell)$ remains $n^{1 + o(1)}$ for a reasonably large range of $\ell$. (This question was also the motivation behind Li's upper bound \eqref{eqn:li-upper-small-l}.)

In this paper, we study this question for the quadratic threshold. When $k$ is even, the quadratic threshold is $k^2/4 + 1$, so it is natural to ask what the behavior of $g(n, k, k^2/4)$ looks like --- we know that it is subquadratic, but \emph{how} far from quadratic is it? So far, the best lower bound we know on $g(n, k, k^2/4)$ (for reasonably large $k$) comes from the bound \eqref{eqn:fps-lower} of Fish, Pohoata, and Sheffer \cite{FPS20} (with $r = 3$), which gives 
\begin{equation}
    g\left(n, k, \frac{k^2}{4}\right) = \Omega(n^{3/2 - 9/k}). \label{eqn:lower-below-qt}
\end{equation}  
Meanwhile, a more careful analysis of the proof of \eqref{eqn:das-quadratic-upper} from \cite{Das23} would give the quantitative bound
\begin{equation}
    g\left(n, k, \frac{k^2}{4}\right) = O(n^{2 - 2/k + o(1)}). \label{eqn:upper-below-qt}
\end{equation}
In fact, this bound can be deduced directly from \eqref{eqn:das-interm-upper} --- if we plug any $c > 2 - 2/k$ into \eqref{eqn:das-interm-upper}, the value of $\ell$ it gives is still $k^2/4$.

For any \emph{fixed} $k$, the exponent of $n$ in \eqref{eqn:upper-below-qt} is bounded away from $2$; so it is not the case that $g(n, k, k^2/4) = n^{2 - o(1)}$. However, the behaviors of the lower and upper bounds \eqref{eqn:lower-below-qt} and \eqref{eqn:upper-below-qt} are qualitatively very different as $k$ becomes large. In \eqref{eqn:lower-below-qt} the exponent of $n$ is at most $3/2$, which is bounded away from $2$; in \eqref{eqn:upper-below-qt} it is at least $2 - 2/k$, which becomes arbitrarily close to $2$ as $k$ grows. So it is natural to ask which behavior is correct. 

In this paper, we show that the qualitative behavior of the lower bound \eqref{eqn:lower-below-qt} is correct --- the exponent of $n$ in $g(n, k, k^2/4)$ is bounded away from $2$ by a constant independent of $k$. (One can formalize the notion of the `exponent of $n$ in $g(n, k, \ell)$' by considering $\liminf_{n \to \infty} \log_n g(n, k, \ell)$ or $\limsup_{n \to \infty} \log_n g(n, k, \ell)$.)

\begin{theorem} \label{thm:main}
    There is an absolute constant $c < 2$ such that for all even $k$, we have \[g\left(n, k, \frac{k^2}{4}\right) = O(n^c).\]   
\end{theorem}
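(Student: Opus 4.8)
The plan is to recast Theorem~\ref{thm:main} as a construction problem and solve it by random deletion, the crux being a structural description of the unavoidable obstructions. Since the quadratic threshold is $k^2/4+1$, an $n$-element set $A$ satisfies the $(k,k^2/4)$-local property exactly when it contains no \emph{deficient} $k$-subset, i.e.\ no $k$-element $A'\subseteq A$ with $\abs{A'-A'}\le k^2/4-1$. It therefore suffices to construct, for every even $k$ at least some absolute threshold $k_0$ (the finitely many smaller $k$ being covered by \eqref{eqn:upper-below-qt}, which already gives $g(n,k,k^2/4)=O(n^{2-2/k})$ for each fixed $k$), a set $A\subseteq\ZZ$ with $\abs A=n$, containing no deficient $k$-subset, and with $\abs{A-A}=O(n^c)$ for an \emph{absolute} $c<2$.

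The heart of the argument is a structural dichotomy for deficient $k$-sets. Refining the extremal analysis behind \eqref{eqn:das-quadratic-lower} from \cite{Das23}, one first shows that a deficient $k$-set which \emph{is} a union of $k/2$ pairs $\{a_i,b_i\}$ sharing a common sum $s$ must have its ``lower half'' $\{a_1,\dots,a_{k/2}\}$ (the $a_i$, taken below $s/2$) either fail to be a Sidon set or admit a \emph{cross-collision} $a_i-a_j=b_p-a_q$, and that either failure is already witnessed by four of the pairs. Hence such a $k$-set contains one of the bounded configurations: (i) eight elements $a_1,\dots,a_4,b_1,\dots,b_4$ with $a_i<s/2<b_i$, all $a_i+b_i=s$, and $a_1+a_2=a_3+a_4$; (i$'$) the six-element analogue with the relation $a_1+a_3=2a_2$; (ii) eight elements with all $a_i+b_i=s$ and a cross-collision relation $a_1-a_2=b_3-a_4$. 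Each of these is cut out by exactly one more linear equation than a generic configuration of the same size, so has $4$, $3$, $4$ degrees of freedom on $8$, $6$, $8$ points. One then shows that every \emph{other} deficient $k$-set --- one that is not a union of $k/2$ common-sum pairs, such as a $k$-AP, or a union of a bounded number of ``aligned'' progression-like or common-sum blocks --- is rigid: being deficient forces so many difference-coincidences that the configuration lies in an algebraic family of dimension at most $\beta k$ for an absolute constant $\beta<1/2$. In sum: every deficient $k$-set contains a copy of (i), (i$'$), or (ii), or else lies in one of finitely many $\le\beta k$-dimensional families.

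Granting this, fix $\alpha$ with $\max(3/7,\beta)<\alpha<1/2$ and take $A_0\subseteq[N]$ random, including each element independently with probability $p=N^{-\alpha}$, so $\EE\abs{A_0}=N^{1-\alpha}$. The expected number of copies of configuration (i) in $A_0$ is $\Theta(N^4p^8)=\Theta(N^{4-8\alpha})$, which is $o(N^{1-\alpha})$ precisely because $\alpha>3/7$ (equivalently $4-8\alpha<1-\alpha$); the same holds for (i$'$), with count $\Theta(N^3p^6)$, and for (ii), with count $\Theta(N^4p^8)$. Each of the finitely many $\le\beta k$-dimensional families contributes expectation $O(N^{\beta k}p^k)=O(N^{(\beta-\alpha)k})=o(1)$. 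So with positive probability $\abs{A_0}\gtrsim N^{1-\alpha}$, the total number of copies of (i), (i$'$), and (ii) is $o(N^{1-\alpha})$, and no rigid family occurs; deleting one point from each such copy produces a set $A$ with $\abs A=\Theta(N^{1-\alpha})$ and no deficient $k$-subset. Since $\abs{A-A}\le N-1$ and $n=\abs A=\Theta(N^{1-\alpha})$, this gives $\abs{A-A}=O\bigl(n^{1/(1-\alpha)}\bigr)$, and $c:=1/(1-\alpha)$ is an absolute constant below $2$ (the bound is far from optimal; e.g.\ $\alpha=15/32$ gives $c=32/17$). Passing to a subset of $A$ fixes the size at exactly $n$.

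The main obstacle is the second step, and within it the rigidity statement: one must rule out a deficient $k$-set that is ``globally deficient but locally generic'' --- that is, any $\Theta(k)$-dimensional family of deficient $k$-sets with dimension-rate at least $\beta$ other than the common-sum families. The mechanism should be that the $\Theta(k^2)$ forced difference-coincidences in a deficient $k$-set either reassemble into the common-sum pattern (yielding a bounded witness among (i), (i$'$), (ii)) or else collapse the number of free parameters well below $k/2$. One must also verify that the ordering constraints $a_i<s/2<b_i$ and the distinctness of the $k$ points introduce no hidden degrees of freedom, and that degenerate (partially collapsed) copies of (i), (i$'$), (ii) remain sufficiently constrained for the deletion step. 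Once the bounded witnesses are secured, the alteration argument in the previous paragraph is routine.
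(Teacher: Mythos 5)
Your overall framework (random subset of $[N]$ at density $N^{-\alpha}$, alteration against bounded witnesses, and a separate count for low-dimensional ``rigid'' families) is sound, and it parallels the paper's construction. But the proof has a genuine gap at exactly the point you flag as ``the main obstacle'': the structural dichotomy --- every deficient $k$-set either contains one of the bounded witnesses or lies in a family of dimension at most $\beta k$ with an \emph{absolute} $\beta<1/2$ --- is asserted, not proved, and it \emph{is} the theorem. Deficiency alone only forces dimension at most $k/2+O(1)$ (this is the content of the paper's baseline bound, Lemma \ref{lem:baseline}); pushing the dimension of non-star-like deficient families below $(1/2-\delta)k$ for an absolute $\delta>0$ is precisely the stability statement that occupies Sections \ref{sec:backbone}--\ref{sec:huge-star} of the paper (Lemmas \ref{lem:stability}, \ref{lem:contained-in-s}, \ref{lem:intersect-r}, \ref{lem:outside-star}, \ref{lem:within-star}), and nothing in your sketch substitutes for that analysis. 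Worse, your version of the statement is in one respect harder than the paper's: the paper first reduces to \emph{valid, collinearity-free, $c$-light} configurations (collinearity is killed deterministically by the Behrend-type set of Lemma \ref{lem:modified-behrend}, and locally heavy patterns by the deletion step), and only then proves the extremality/stability of the star; your rigidity claim must instead hold for \emph{all} deficient real $k$-sets, including those exploiting midpoints, $3$-APs and other collinear or locally dense patterns, which are exactly the degenerate structures the paper's good/bad dichotomy is designed to sweep out before the structural analysis begins.

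A secondary, fixable issue: your witness list is incomplete even for deficient sets that \emph{are} unions of $k/2$ common-sum pairs. The single extra coincidence need not be of type (i), (i$'$) or (ii): for instance $a_1+a_2=b_3+b_4$ (two lower elements summing to two upper elements), or a collinearity $a_1+b_2=2a_3$ involving an upper element, are difference coincidences on at most four pairs that fall outside your three types. Each such pattern still gives a bounded witness with one extra equation (at most $8$ points, dimension at most $4$), so the same parameter choice $\alpha>3/7$ handles an exhaustive list; but the list must actually be exhaustive, and compiling it is essentially the case analysis the paper performs (in configuration language) in Lemma \ref{lem:within-star} and Section \ref{sec:huge-star}. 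So as written the argument establishes the alteration scaffolding but leaves the core structural lemma --- the analogue of Lemma \ref{lem:good-certify} --- unproved.
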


Our proof obtains the constant $c = 2 - 2^{-29}$. (We did not optimize this.)

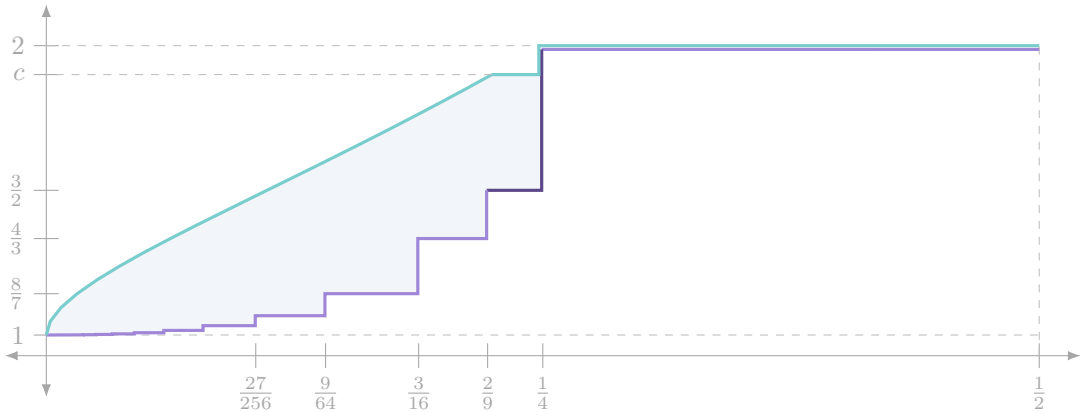
\begin{figure}[ht]
    \centering
    \begin{tikzpicture}[scale = 1.1]
        \draw [Latex-Latex, gray!70] (-0.5, 0) -- (12.5, 0);
        \draw [Latex-Latex, gray!70] (0, -0.5) -- (0, 4.25);
        
        \foreach \i\j in {1/2, 1/4, 3/16, 9/64, 27/256, 2/9} {
            \draw [gray!70] (24*\i/\j, 0.15) -- (24*\i/\j, -0.15) node [anchor = north] {$\frac{\i}{\j}$};
        }
        \foreach \i in {2, 1} {
            \draw [gray!70] (0.15, {3.5*(\i - 1) + 0.25}) -- (-0.15, {3.5*(\i - 1) + 0.25}) node [anchor = east] {$\i$};
        }
        \foreach \i\j in {3/2, 4/3, 8/7} {
            \draw [gray!70] (0.15, {3.5*(\i/\j - 1) + 0.25}) -- (-0.15, {3.5*(\i/\j - 1) + 0.25}) node [anchor = east] {$\frac{\i}{\j}$};
        }
        \draw [gray!70] (0.15, {3.5 * 0.9 + 0.25}) -- (-0.15, {3.5 * 0.9 + 0.25}) node [anchor = east] {$c$};
        \draw [gray!50, dashed] (0, 0.25) -- (12, 0.25);
        \draw [gray!50, dashed] (0, 3.75) -- (12, 3.75);
        \draw [gray!50, dashed] (12, 0) -- (12, 3.75);
        \begin{scope}[yshift = 0.25cm, yscale = 3.5, yshift = -1cm, xscale = 24]
            \coordinate (a0) at (1/4, 1.987);
            \coordinate (b0) at (1/4, 3/2);
            \coordinate (a1) at (2/9, 3/2);
            \coordinate (b1) at (2/9, 4/3);
            \foreach \i in {2, 3, ..., 10} {
                \coordinate (a\i) at ({1/3*(3/4)^\i}, {1 + 1/(2^\i - 1)});
                \coordinate (b\i) at ({1/3*(3/4)^\i}, {1 + 1/(2*2^\i - 1)});
            }
            \coordinate (a11) at (0, 1);
            \fill [DarkSlateGray3!50!MediumPurple3!10] (1/4, 1.9) -- (b0) -- (a1) -- (b1) -- (a2) -- (b2) -- (a3) -- (b3) -- (a4) -- (b4) -- (a5) -- (b5) -- (a6) -- (b6) -- (a7) -- (b7) -- (a8) -- (b8) -- (a9) -- (b9) -- (a10) -- (b10) -- (a11) [domain = 1:1.9, samples = 20] plot({(\x - 1)^2/(\x^2)}, \x) -- (1/4, 1.9);
            \draw [MediumPurple3!80, very thick] (1/2, 1.987) -- (a0);
            \draw [MediumPurple4, very thick] (a0) -- (b0) -- (a1);
            \draw [MediumPurple3!80, very thick] (a1) -- (b1) -- (a2) -- (b2) -- (a3) -- (b3) -- (a4) -- (b4) -- (a5) -- (b5) -- (a6) -- (b6) -- (a7) -- (b7) -- (a8) -- (b8) -- (a9) -- (b9) -- (a10) -- (b10) -- (a11);
            \draw [DarkSlateGray3, very thick] (1/2, 2) -- (0.248, 2) -- (0.248, 1.9) -- (0.81/3.61, 1.9) [domain = 1.9:1, samples = 20] plot({(\x - 1)^2/(\x^2)}, \x);
            \draw [gray!50, dashed] (0.81/3.61, 1.9) -- (0, 1.9);
        \end{scope}
    \end{tikzpicture}
    \caption{A version of Figure \ref{fig:previous-bounds} incorporating Theorem \ref{thm:main} (the value of $c$ is not to scale). There is a `gap' on the $y$-axis between $c$ and $2$ --- the exponent of $n$ in $g(n, k, \ell)$ can never lie in this range when $k$ is even.}
\end{figure}

Interestingly, the question of what happens immediately below the quadratic threshold has also been studied for the \emph{graph} local properties problem, and there the answer is the opposite. The quadratic threshold is $\binom{k}{2} - \sfloor{k/2} + 2$. Immediately below this threshold, Erd\H{o}s and Gy\'arf\'as \cite{EG97} proved an upper bound of \[f\left(n, k, \binom{n}{2} - \floor{\frac{k}{2}} + 1\right) = O(n^{2 - 4/k}),\] while Fish, Pohoata, and Sheffer \cite{FPS20} proved a lower bound of \[f\left(n, k, \binom{k}{2} - \frac{k}{2} + 1\right) = \Omega(n^{2 - 8/k})\] when $8 \mid k$. In both bounds, the exponent of $n$ is less than $2$ for any fixed $k$, but grows arbitrarily close to $2$ as $k$ grows. So the transitions that $g$ and $f$ display at their quadratic thresholds are quite different --- for $g$ there is a constant-sized gap between the exponents of $n$ in $g(n, k, \ell)$ at and below the quadratic threshold, while for $f$ the gap becomes arbitrarily small as $k$ grows (at least when $8 \mid k$). 

For odd $k$, we obtain a similar improvement to the bound \eqref{eqn:das-odd-quadratic-upper} on $g(n, k, (k + 1)^2/4 - 4)$. 

\begin{prop} \label{prop:odd-k}
    There is an absolute constant $c < 2$ such that for all odd $k$, we have \[g\left(n, k, \frac{(k + 1)^2}{4} - 4\right) = O(n^c).\]   
\end{prop}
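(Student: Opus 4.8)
The plan is to prove Proposition~\ref{prop:odd-k} by mirroring the proof of Theorem~\ref{thm:main}: I would use the same construction (possibly after a cosmetic modification), and only the verification of the local property would change. Write $k = 2m + 1$ with $m = (k-1)/2$, so that the goal is the $(k, (k+1)^2/4 - 4)$-local property, i.e.\ that every $k$-subset spans more than $(k+1)^2/4 - 5$ differences. The guiding observation is that for odd $k$ the natural near-extremal $k$-element configuration is a set of the form $B \cup (s - B) \cup \{z\}$ with $|B| = m$, and a direct count shows such a set generically has exactly $2\binom m2 + 3m = m^2 + 2m = (k+1)^2/4 - 1$ differences. So a configuration of this shape violates the desired local property only if at least four of its differences collide --- which is precisely the origin of the ``$-4$'', just as the corresponding count with no unpaired point $z$ produces the even threshold $k^2/4$.

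First I would record a cheap reduction that uses Theorem~\ref{thm:main} purely as a black box. Apply that theorem with the even integer $k - 1$ in place of $k$ to obtain a set $A$ with $|A| = n$, $|A - A| = O(n^c)$ for the same absolute $c = 2 - 2^{-29}$, and the $(k - 1, (k-1)^2/4)$-local property. If some $k$-subset $C \subseteq A$ had $|C - C| < (k-1)^2/4$, then deleting any single element of $C$ would leave a $(k-1)$-subset with still fewer than $(k-1)^2/4$ differences, contradicting the $(k-1, (k-1)^2/4)$-local property. So it remains only to handle $k$-subsets $C$ with $|C - C|$ lying in the narrow window $\bigl[(k-1)^2/4, \, (k+1)^2/4 - 5\bigr]$ (of width $k - 5$) --- that is, $k$-subsets that are nearly as structured as the configuration above.

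For those, I would re-run the structural part of the proof of Theorem~\ref{thm:main}: a $k$-set with so few differences should be forced into (essentially) the shape $B \cup (s - B) \cup \{z\}$, after which the count $m^2 + 2m$ above, combined with the construction's defining features --- the ones that prevent a single difference or a single sum from repeating too efficiently --- should show that at most three of the differences of $C$ can collide, so $|C - C| \ge (k+1)^2/4 - 4$, the desired contradiction. Since the construction is unchanged, the bound $O(n^c)$ with the same $c < 2$ then follows immediately, and finitely many small values of $k$ can be dispatched separately.

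The main obstacle is exactly this last step. I would need the structural lemma behind Theorem~\ref{thm:main} in a form robust enough to tolerate a deficit of four differences: it must not merely say ``few differences forces structure'' but must quantitatively bound how many collisions a near-extremal configuration can sustain, and in particular rule out the four collisions that the single unpaired point $z$ might otherwise conceal among the roughly $k$ new differences it contributes. In the even case the analogous slack is zero, so the odd case has morally \emph{more} room to work with; the real content is checking that the scales and modular obstructions built into the construction genuinely absorb those four collisions, not just the zero collisions the even argument must handle.
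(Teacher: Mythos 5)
Your high-level plan is the same as the paper's: keep the construction of Lemma~\ref{lem:random-constr} unchanged and redo only the verification, showing that the worst admissible $k$-configuration for odd $k$ is a star of size $k-1$ together with one extra point tied in by a single extra equation, which has exactly $(k+1)^2/4-4$ differences (this is Lemma~\ref{lem:good-certify-odd} in the paper). Your preliminary reduction via Theorem~\ref{thm:main} applied to $k-1$ is sound as far as it goes (any $k$-subset with fewer than $(k-1)^2/4$ differences would contain a violating $(k-1)$-subset), but it is not used in the paper and it does not help with the real work, since it leaves open a window of width about $k-4$ that contains the entire difficulty.

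The genuine gap is the step you yourself flag as ``the main obstacle'': you assert, but do not prove, that a near-extremal $k$-subset is forced into the shape $B\cup(s-B)\cup\{z\}$ and that the unpaired point can then ``conceal'' at most three coincidences. Two concrete pieces are missing. First, the stability step (Lemma~\ref{lem:stability}) only produces a star of size at least $(1-17\eps)k$, not exactly $k-1$, so one must also handle configurations with up to $17\eps k$ points outside the star; the paper does this by charging each further outside variable at most $144\eps p\le p/2$ certified pairs via Lemma~\ref{lem:outside-star} and checking the arithmetic $p^2-p+3+(k-2p-1)p/2\le (k-1)(k-3)/4+3$. Second, and more importantly, the bound of three coincidences for the distinguished extra point is not a property of ``scales and modular obstructions built into the construction'' --- the construction is a random subset of a Behrend-type set and contributes nothing here beyond $c$-goodness; it is a purely combinatorial statement about $c$-good configurations, and it needs a new lemma. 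In the paper this is Lemma~\ref{lem:huge-star-only-one}: every difference equality on the star variables involving the extra point $x_{2p+1}$ is implied by a \emph{single} such equality together with the star (proved by counting variables against $c$-lightness, exactly as in Lemma~\ref{lem:within-star}), after which Lemma~\ref{lem:indiv-eqn} caps the certified pairs $(2p+1,\bullet)$ at three. Without an argument of this kind, nothing rules out the extra point participating in several independent coincidences and dropping the count below $(k+1)^2/4-4$, so the proposal as written does not yet constitute a proof.
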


Unlike for even $k$, we do not know the exact quadratic threshold for odd $k$ --- we know that $g(n, k, (k + 1)^2/4) = \Omega(n^2)$, but we do not understand the behavior of $g(n, k, \ell)$ when $\ell$ is between these two values. However, Proposition \ref{prop:odd-k} does guarantee that the exponent of $n$ in $g(n, k, \ell)$ jumps by a constant independent of $k$ when we decrease $\ell$ from the quadratic threshold to \emph{four} below it (for even $k$, we could make the same statement with one in place of four). 

\subsection{Overview} \label{subsec:overview}

We now give an overview of the ideas of the proof of Theorem \ref{thm:main}. (The proof of Proposition \ref{prop:odd-k} uses the same ideas.)

First, it is often useful to think about whether a set satisfies the $(k, \ell)$-local property in terms of the `configurations of equal differences' formed by its $k$-element subsets. More precisely, given $k$ numbers, we can write down a (minimal) system of equations that keeps track of which differences among them are equal --- for example, $\{1, 2, 5, 6, 9\}$ corresponds to the system $\{x_1 - x_2 = x_3 - x_4, \, x_1 - x_3 = x_3 - x_5\}$. We refer to such a system of equations as a \emph{$k$-configuration}. We can figure out the number of distinct differences among $k$ numbers just by looking at the $k$-configuration they form; so a set satisfies the $(k, \ell)$-local property if and only if it avoids all $k$-configurations which have fewer than $\ell$ distinct differences. 

We will first discuss the ideas behind the proof of \eqref{eqn:das-quadratic-upper} (the weaker bound $g(n, k, k^2/4) = o(n^2)$) in \cite{Das23}, which the proof of Theorem \ref{thm:main} builds on. Our goal is to obtain a set $A$ with $\abs{A} = n$ and $\abs{A - A} = o(n^2)$ which satisfies the $(k, k^2/4)$-local property. We do so via a random construction. Roughly, we start by taking a random subset of $\{1, 2, \ldots, o(n^2)\}$ of size a bit bigger than $n$. Some $k$-configurations are expected to appear very few times in this random subset, so we can eliminate all their appearances using the alteration method. We refer to $k$-configurations that we can avoid in this way as \emph{$2$-bad}, and the remaining ones as \emph{$2$-good}. (We are suppressing a few technical details here --- the actual construction and the description of $2$-good and $2$-bad $k$-configurations are slightly more complicated --- but this is the main idea.)

It then suffices to show that every $2$-good $k$-configuration has at least $k^2/4$ distinct differences. The idea is that the number of times a $k$-configuration is expected to appear in our random subset is controlled by the number of times it appears in the ground set $\{1, 2, \ldots, o(n^2)\}$, which is controlled by the number of linearly independent equations it has. So a $2$-good $k$-configuration cannot have too many independent equations, and we can use this to show that it cannot have too many `repeated' differences (so it must have many distinct differences). The value $k^2/4$ is tight --- the $k$-configuration \[\{x_1 + x_2 = x_3 + x_4 = \cdots = x_{k - 1} + x_k\},\] which we call a \emph{star of size $k$}, is $2$-good and has exactly $k^2/4$ distinct differences. 

In order to prove Theorem \ref{thm:main}, we wish to improve the bound of $o(n^2)$ to $O(n^c)$; so we now take a random subset of $\{1, 2, \ldots, O(n^c)\}$. We refer to the $k$-configurations that we can avoid using the alteration method as \emph{$c$-bad}, and the remaining ones as \emph{$c$-good}. Being $c$-good is a somewhat weaker condition than being $2$-good. However, it turns out that we can still show that every $c$-good $k$-configuration has at least $k^2/4$ distinct differences --- i.e., the star of size $k$ is still the one with the fewest differences. Proving this statement is the bulk of the argument. For this, we use a stability-type argument, which consists of three steps.
\begin{enumerate}[(1)]
    \item \label{step:baseline} First, by directly reusing the ideas from the argument that every $2$-good $k$-configuration has at least $k^2/4$ distinct differences, we obtain a slightly weaker bound for $c$-good $k$-configurations (one that differs from $k^2/4$ by a constant factor depending on $2 - c$).
    \item \label{step:stability} Next, we show that any $k$-configuration whose number of distinct differences is close to this weaker bound must be close to a star of size $k$ --- more precisely, any $k$-configuration within a $(1 - \eps)$-factor of the weaker bound must contain a star of size $(1 - \eps')k$ (where $\eps' \to 0$ as $\eps \to 0$). 
    
    If $2 - c$ is sufficiently small with respect to $\eps$, then any $k$-configuration which is not within a $(1 - \eps)$-factor of the weaker bound will have fewer than $k^2/4$ distinct differences. So it now suffices to consider $k$-configurations which contain a huge star. 
    \item \label{step:huge-star} Finally, we show that among $c$-good $k$-configurations which contain a huge star, the one with the fewest differences is the star of size $k$. Intuitively, we can imagine constructing such a $k$-configuration by starting with a huge star and adding a few extra equations. Requiring the $k$-configuration to be $c$-good \emph{a priori} allows us to add a few more equations than we could if we required it to be $2$-good. However, we show roughly that even with these extra equations, there is nothing better to do with the remaining variables (i.e., the ones not in the huge star we started with) than adding them to the star; and once our star has all $k$ variables, we show that we cannot add any more equations on top of it without violating $c$-goodness. 
\end{enumerate}

The structure of the paper is as follows. In Section \ref{sec:setup}, we flesh out the concept of $k$-configurations and define which ones are $c$-good and $c$-bad. In Section \ref{sec:random-constr}, we describe the random construction and show that it avoids all $c$-bad $k$-configurations. (This random construction is almost the same as the one in \cite{Das23}, but we use a slightly stronger definition of $c$-goodness here for technical reasons, so we need to tweak the construction a bit.) In Section \ref{sec:backbone}, we present the `backbone' of the argument that every $c$-good $k$-configuration has at least $k^2/4$ distinct differences; this argument involves a handful of lemmas whose proofs are fairly technical, and we present these proofs in Sections \ref{sec:observations}--\ref{sec:huge-star}. This completes the proof of Theorem \ref{thm:main}; in Section \ref{sec:odd-k}, we explain how to adapt the argument to prove Proposition \ref{prop:odd-k}.  

\section{Setup} \label{sec:setup}

In this section, we formalize the notion of $k$-configurations and define which $k$-configurations we consider $c$-good and $c$-bad.

\subsection{Conventions}

Throughout this paper, we will work with systems of linear equations in $k$ variables $x_1$, \ldots, $x_k$; we first fix a few conventions regarding such systems. 

We always work over the field $\QQ$. We consider linear equations to be defined up to rearrangement, but not scaling. For example, we consider $x_1 - x_2 = x_3 - x_4$ and $x_1 + x_4 = x_2 + x_3$ to be the same equation, but $x_1 - x_2 = 0$ and $2x_1 - 2x_2 = 0$ to be different equations.

Given a linear equation $(*)$, we define its \emph{content} to be an expression $*$ such that $(*)$ is the equation ${*} = 0$. (For any nontrivial equation $(*)$ --- i.e., one that is not the equation $0 = 0$ --- there are two ways to define its content; we choose one arbitrarily.) For example, the content of the equation $x_1 - x_2 = x_3 - x_4$ is either $x_1 - x_2 - x_3 + x_4$ or $-x_1 + x_2 + x_3 - x_4$.

We say linear equations $(*_1)$, \ldots, $(*_t)$ are \emph{independent} if their contents $*_1$, \ldots, $*_t$ are linearly independent. We say a collection of linear equations $\cT = \{(*_1), \ldots, (*_t)\}$ \emph{implies} an equation $(*)$ if $(*)$ can be written as a linear combination of $(*_1)$, \ldots, $(*_t)$, i.e., 
\begin{equation}
    {*} = c_1{*_1} + \cdots + c_t{*_t} \label{eqn:imply-coeffs}
\end{equation} 
for some $c_1, \ldots, c_t \in \QQ$. We say $\cT$ \emph{minimally implies} $(*)$ if $\cT$ implies $(*)$ but no proper subset of $\cT$ does; equivalently, $\cT$ minimally implies $(*)$ if $\cT$ is independent and the coefficients $c_1$, \ldots, $c_t$ in \eqref{eqn:imply-coeffs} are nonzero. 

\subsection{Definition of \texorpdfstring{$k$}{k}-configurations}

We now define $k$-configurations and formalize some of the notions informally referred to in the proof overview in Subsection \ref{subsec:overview} (e.g., what it means for a $k$-configuration to `have' a certain number of distinct differences). 

We define a \emph{difference equality} to be a nontrivial equation of the form \[x_{i_1} - x_{i_2} = x_{i_3} - x_{i_4}\] for some (not necessarily distinct) indices $i_1, \ldots, i_4 \in [k]$. We define a \emph{$k$-configuration} to be a system of linear equations in $x_1$, \ldots, $x_k$ where each equation is a difference equality. For convenience, we consider two $k$-configurations to be the same if they produce equivalent systems of equations --- for example, we consider \[\{x_1 - x_2 = x_3 - x_4, \, x_1 - x_2 = x_5 - x_6\} \quad \text{and} \quad \{x_1 - x_2 = x_3 - x_4, \, x_3 - x_4 = x_5 - x_6\}\] to be the same, and we will typically write this $k$-configuration as $\{x_1 - x_2 = x_3 - x_4 = x_5 - x_6\}$. 

We define the $k$-configuration \emph{formed} by $k$ numbers $a_1$, \ldots, $a_k$ to be the $k$-configuration consisting of all the difference equalities that $(a_1, \ldots, a_k)$ satisfies. For example, the $k$-configuration formed by $1$, $2$, $4$, $5$, $9$, $10$ (given in that order) is $\{x_1 - x_2 = x_3 - x_4 = x_5 - x_6, \, x_1 - 2x_4 + x_5 = 0\}$.

We will only be interested in $k$-configurations formed by $k$ distinct numbers. So we say a $k$-configuration $\cC$ is \emph{invalid} if it implies $x_i = x_j$ for some $i \neq j$, and \emph{valid} otherwise. If $a_1$, \ldots, $a_k$ are distinct, then the $k$-configuration they form must be valid. 

For a pair $(i, j) \in [k]^2$ with $i > j$, we say a $k$-configuration $\cC$ \emph{certifies} $(i, j)$ if $\cC$ implies \[x_i - x_j = x_{i'} - x_{j'}\] for some $(i', j') \in [k]^2$ where either $i', j' < i$, or $j' = i$ and $i' < j$. 

The reason for this definition is as follows: Suppose that we want to understand the number of distinct differences among $k$ distinct numbers $a_1$, \ldots, $a_k$. Then we can imagine going through all possible differences $\abs{a_2 - a_1}$, $\abs{a_3 - a_1}$, $\abs{a_3 - a_2}$, $\abs{a_4 - a_1}$, $\abs{a_4 - a_2}$, $\abs{a_4 - a_3}$, \ldots, in that order, and recording whether each difference is repeated or new (i.e., whether we have already seen it or not). If we let $\cC$ be the $k$-configuration formed by $a_1$, \ldots, $a_k$, then a difference $\abs{a_i - a_j}$ is repeated if and only if $\cC$ certifies $(i, j)$. So the number of distinct differences among $a_1$, \ldots, $a_k$ is the number of pairs $(i, j)$ that $\cC$ does \emph{not} certify, or $\binom{k}{2}$ minus the number of pairs that it \emph{does} certify. 

In particular, this means that if $\cC$ is a valid $k$-configuration, then permuting the indices of its variables does not affect the number of pairs it certifies (we can always find $k$ distinct numbers $a_1$, \ldots, $a_k$ that form $\cC$; permuting indices in $\cC$ corresponds to reordering these numbers, which of course does not affect the number of distinct differences among them). 

\begin{example} \label{ex:star}
    For $2p \leq k$, we say a \emph{star of size $2p$} is a $k$-configuration \[\{x_{i_1} + x_{i_2} = x_{i_3} + x_{i_4} = \cdots = x_{i_{2p - 1}} + x_{i_{2p}}\}\] for distinct $i_1, \ldots, i_{2p} \in [k]$. To compute the number of pairs certified by a star of size $2p$, we can rename its variables so that it becomes $\{x_1 + x_2 = x_3 + x_4 = \cdots = x_{2p - 1} + x_{2p}\}$. This $k$-configuration certifies $(2i, 2j - 1)$ and $(2i, 2j)$ for all $j < i \leq p$, since it implies \[x_{2i} - x_{2j - 1} = x_{2j} - x_{2i - 1} \quad \text{and} \quad x_{2i} - x_{2j} = x_{2j - 1} - x_{2i - 1}.\] So in total, it certifies $2 + 4 + 6 + \cdots + 2(p - 1) = p^2 - p$ pairs. 

    \begin{figure}[ht]
        \begin{tikzpicture}[scale = 0.7]
            \foreach \i\j\k\l\m\n in {1/2/-2.5/0/left/right, 3/4/-2/1.2/above left/below right, 5/6/-0.2/-2/below/above, 7/8/1.5/1.9/above right/below left} {
                \node [sdot, label = \m: {$x_{\i}$}] (\i) at (\k, \l) {};
                \node [sdot, label = \n: {$x_{\j}$}] (\j) at (-\k, -\l) {};
                \draw [gray!30, very thick] (\i) -- (\j);
            }
            \foreach \i\j\k\l\c\p in {3/7/4/8/MediumPurple3/0.1} {
                \filldraw [\c, draw opacity = 0.25, fill opacity = \p] (\i.center) -- (\j.center) -- (\k.center) -- (\l.center) -- cycle;
            }
        \end{tikzpicture}
        \caption{A star of size $8$, where we depict a difference equality $x_{i_1} - x_{i_2} = x_{i_3} - x_{i_4}$ by placing $x_{i_1}$, $x_{i_2}$, $x_{i_4}$, $x_{i_3}$ in a parallelogram. A star of size $2p$ then consists of $p$ pairs with the same midpoint, which is the reason for the name. The shaded parallelogram shows that this star certifies $(8, 3)$ and $(8, 4)$.}
    \end{figure}
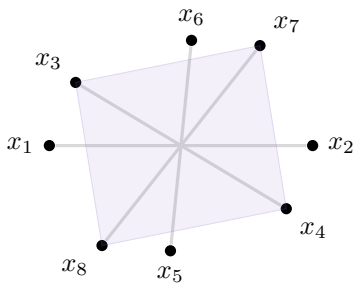
\end{example}

\subsection{Good and bad \texorpdfstring{$k$}{k}-configurations}

We will now define which $k$-configurations are $c$-good and $c$-bad. 

First, we say a $k$-configuration $\cC$ is \emph{collinearity-inducing} if it implies an equation containing exactly three variables, and \emph{collinearity-free} otherwise. (The reason for this name is that such an equation must be of the form $\alpha x_{i_1} + \beta x_{i_2} + \gamma x_{i_3} = 0$ where $\alpha + \beta + \gamma = 0$ --- the coefficients of a difference equality sum to $0$, so the same is true of any equation implied by a $k$-configuration --- and if we think of $x_1$, \ldots, $x_k$ as points, then such an equation would be saying that three of them are collinear.)

Given $1 < c \leq 2$, we say $\cC$ is \emph{$c$-heavy} if it implies a collection of $t$ independent equations which in total contain fewer than $ct + 1$ variables (for any $t \geq 1$); otherwise we say $\cC$ is \emph{$c$-light}.

Finally, we say $\cC$ is \emph{$c$-good} if it is valid, collinearity-free, and $c$-light; otherwise we say $\cC$ is \emph{$c$-bad}. Note that if $\cC$ is $c$-good, then every difference equality implied by $\cC$ contains four distinct variables (a difference equality containing two variables would be invalid, and one containing three variables would be collinearity-inducing). 

\begin{example} \label{ex:c-good}
    To illustrate these definitions, we consider a few examples with $c = 2$. (The value of $k$ in these examples is not important, as long as it is not smaller than any of the indices used.)
    \begin{enumerate}[(a)]
        \item \label{ex-item:2-eqns-4-vars} The $k$-configuration $\{x_1 - x_2 = x_3 - x_4, \, x_1 + x_2 = x_3 + x_4\}$ is invalid, as it implies $x_1 = x_3$. 
        \item \label{ex-item:collinearity} The $k$-configuration $\{x_1 - x_2 = x_3 - x_4, \, x_1 + x_2 = x_3 + x_5\}$ is valid and $2$-light. However, it is collinearity-inducing, as it implies $2x_2 - x_4 - x_5 = 0$. 
        
        \begin{figure}[ht]
            \begin{tikzpicture}
                \node [sdot, label = above: {$x_1$}] (x1) at (0, 0) {};
                \node [sdot, label = below: {$x_2$}] (x2) at (0.7, -1) {};
                \node [sdot, label = above: {$x_3$}] (x3) at (2, 0) {};
                \node [sdot, label = below: {$x_4$}] (x4) at ($(x2) + (x3) - (x1)$) {};
                \node [sdot, label = below: {$x_5$}] (x5) at ($(x1) + (x2) - (x3)$) {};
                \foreach \i\j\k\l\c\p in {x1/x2/x4/x3/MediumPurple3/0.1, x1/x3/x2/x5/DarkSlateGray3/0.1} {
                    \filldraw [\c, draw opacity = 0.25, fill opacity = \p] (\i.center) -- (\j.center) -- (\k.center) -- (\l.center) -- cycle;
                }
            \end{tikzpicture}
            \caption{The $k$-configuration from \ref{ex-item:collinearity}. We can see that $x_4$, $x_2$, and $x_5$ are forced to be collinear.}
        \end{figure}
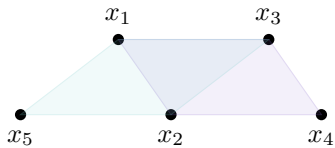
        \item \label{ex-item:cube} The $k$-configuration $\{x_1 - x_2 = x_3 - x_4 = x_5 - x_6 = x_7 - x_8, \, x_1 - x_3 = x_5 - x_7\}$ is valid and collinearity-free. However, it is $2$-heavy, since it implies $4$ independent equations on $8$ variables. 
        
        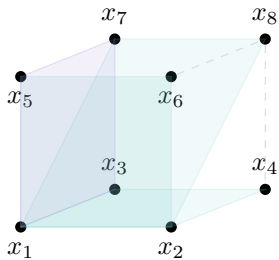
\begin{figure}[ht]
            \begin{tikzpicture}
                \node [sdot, label = below: {$x_1$}] (x1) at (0, 0) {};
                \node [sdot, label = below: {$x_2$}] (x2) at (2, 0) {};
                \node [sdot, label = above: {$x_3$}] (x3) at (1.25, 0.5) {};
                \node [sdot, label = above: {$x_4$}] (x4) at ($(x2) + (x3) - (x1)$) {};
                \node [sdot, label = below: {$x_5$}] (x5) at (0, 2) {};
                \foreach \i\j\k in {2/6/below, 3/7/above, 4/8/above} {
                    \node [sdot, label = \k: {$x_\j$}] (x\j) at ($(x\i) + (x5) - (x1)$) {};
                }
                \foreach \i\j\k\l\c\p in {x1/x2/x4/x3/DarkSlateGray3/0.1, x1/x2/x6/x5/DarkSlateGray3/0.15, x1/x2/x8/x7/DarkSlateGray3/0.1, x1/x3/x7/x5/MediumPurple3/0.1} {
                    \filldraw [\c, draw opacity = 0.25, fill opacity = \p] (\i.center) -- (\j.center) -- (\k.center) -- (\l.center) -- cycle;
                }
                \draw [gray!30, dashed] (x6) -- (x8) -- (x4);
            \end{tikzpicture}
            \caption{The $k$-configuration from \ref{ex-item:cube}, which describes an affine $3$-cube.}
        \end{figure}
        \item For any $p$, the star $\{x_1 + x_2 = x_3 + x_4 = \cdots = x_{2p - 1} + x_{2p}\}$ is $2$-good. One way to see this is that the content of any equation it implies is a linear combination of $x_1 + x_2$, $x_3 + x_4$, \ldots, $x_{2p - 1} + x_{2p}$ with coefficients summing to $0$. Any $t$ independent equations of this form must use at least $t + 1$ of $x_1 + x_2$, \ldots, $x_{2p - 1} + x_{2p}$, so they must contain at least $2(t + 1)$ variables. 
    \end{enumerate}
\end{example}

Now that we have these definitions, our goal is to prove the following two statements. We prove Lemma \ref{lem:random-constr} in Section \ref{sec:random-constr} (using a random construction), and Lemma \ref{lem:good-certify} in Sections \ref{sec:backbone}--\ref{sec:huge-star} (using the stability-type argument described in Subsection \ref{subsec:overview}). 

\begin{lemma} \label{lem:random-constr}
    Fix any $1 < c \leq 2$. For every $n \in \NN$, there is a set $A \subseteq \ZZ$ with $\abs{A} = n$ and $\abs{A - A} = O(n^c)$ such that for all distinct $a_1, \ldots, a_k \in A$, the $k$-configuration that they form is $c$-good. 
\end{lemma}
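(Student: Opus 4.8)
To prove Lemma~\ref{lem:random-constr}, the plan is to run the alteration method over a Behrend-type ground set. I would first record two finiteness facts that make all the later estimates uniform. There are only finitely many $k$-configurations, since a difference equality is determined by a quadruple of indices in $[k]$, so there are at most $2^{k^4}$ possible sets of difference equalities. Moreover, for a valid $k$-configuration $\cC$ and a triple of indices $(i, j, \ell)$, the content-space of $\cC$ meets the space of equations supported on $\{x_i, x_j, x_\ell\}$ in dimension at most $1$ (dimension $\ge 2$ would force a two-variable relation, hence invalidity); so up to scaling only finitely many three-variable equations are ever implied by valid $k$-configurations, and by Cramer's rule they all have coefficients bounded in terms of $k$. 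In the same spirit, to each $c$-heavy configuration $\cC$ I would attach a fixed witness: a family $E_\cC$ of $t_\cC$ independent implied equations whose variables form an index set $S_\cC$ with $m_\cC := |S_\cC| < c\hspace{1pt} t_\cC + 1$, again with coefficients bounded in terms of $k$. Since we only ever use $k$-tuples of distinct numbers, such a tuple can fail to form a $c$-good configuration only by forming a collinearity-inducing or a $c$-heavy one.

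Next I would fix the ground set. Let $N = \lceil n^c \rceil$, and let $B \subseteq \{1, \ldots, N\}$ be a Behrend-type set --- a suitable dilate of the set of integers whose digits in a large base lie on a sphere in a box --- chosen so that $B$ contains no nontrivial solution to any invariant three-variable linear equation with coefficients bounded by the threshold from the previous paragraph. The standard digit-and-convexity argument (choosing the base large enough to avoid carries, then using strict convexity of the sphere) produces such a $B$ with $|B| \ge N \cdot e^{-O(\sqrt{\log N})} = n^{c - o(1)}$, and handles all these equations simultaneously. By construction every subset of $B$ forms a collinearity-free configuration, and $|B - B| \le N = O(n^c)$; so any $A \subseteq B$ automatically satisfies $|A - A| = O(n^c)$ and has every $k$-subset forming a valid, collinearity-free configuration.

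Then I would take $A_0 \subseteq B$ by including each element of $B$ independently with probability $p = Cn/|B| = n^{1 - c + o(1)}$, for a large absolute constant $C$, so that $\EE|A_0| = Cn$. The key estimate is that for each $c$-heavy $\cC$, since $t_\cC$ independent bounded-coefficient linear equations in $m_\cC$ variables have $O(N^{m_\cC - t_\cC})$ integer solutions in $\{1,\ldots,N\}^{m_\cC}$, the expected number of tuples of distinct elements of $A_0$ indexed by $S_\cC$ and satisfying every equation of $E_\cC$ is at most $O(N^{m_\cC - t_\cC} p^{m_\cC}) = O(n^{c(m_\cC - t_\cC) + (1 - c)m_\cC + o(1)}) = O(n^{m_\cC - c t_\cC + o(1)})$. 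Since $m_\cC - c t_\cC < 1$ and there are only finitely many configurations, this is $O(n^{1 - \delta + o(1)}) = o(n)$ for a fixed $\delta > 0$, and summing over the finitely many $c$-heavy $\cC$ the expected total number of such ``bad tuples'' is $o(n)$. By Markov's inequality the total is $o(n)$ with probability at least $\tfrac34$, and by Chebyshev's inequality $|A_0| \ge Cn/2$ with probability at least $\tfrac34$ once $n$ is large; fixing an outcome with both properties and deleting one element from each bad tuple yields $A$ with $|A| \ge Cn/2 - o(n) \ge n$ (taking $C \ge 3$). Passing to a subset of size exactly $n$: any $k$-element subset of $A$ consists of distinct numbers lying in $B$ and contains no bad $m_\cC$-subtuple, so it forms a configuration that is valid, collinearity-free and $c$-light --- that is, $c$-good. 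For the finitely many small $n$ not covered by the hypothesis ``$n$ large'' above, I would just take $A$ to be a Sidon set of size $n$: it lies in $\{1, \ldots, O(n^2)\} = \{1, \ldots, O(1)\}$, has $|A - A| = O(1) = O(n^c)$, and every subset of it forms the empty configuration, which is trivially $c$-good.

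The step I expect to be the main obstacle is the collinearity one: a $p$-random subset of $\{1, \ldots, N\}$ with $N = \Theta(n^c)$ and $c < 2$ already contains $\Theta(n^{3-c}) = \omega(n)$ three-term progressions (and many other bounded-coefficient three-term relations), which no amount of alteration can remove --- so collinearity-freeness must be engineered into the ground set, and one has to check that the Behrend-type set avoids not merely $3$-term progressions but every relevant bounded-coefficient invariant three-variable equation at once. The $c$-heavy estimate, by contrast, is routine linear algebra once the exponent bookkeeping $m_\cC - c t_\cC < 1$ is in place.
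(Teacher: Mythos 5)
Your proposal is correct and is essentially the paper's own argument: a Behrend-type ground set (handling all bounded-coefficient, coefficient-sum-zero three-variable equations simultaneously, exactly as in the paper's Lemma \ref{lem:modified-behrend}) deterministically rules out collinearity-inducing configurations, and then a random subset of density $\approx n/|B|$ plus alteration kills the $c$-heavy configurations via the same exponent count $m_\cC - c\,t_\cC < 1$. The remaining differences --- counting solutions in $\{1,\ldots,N\}$ rather than in the ground set, using Markov/Chebyshev instead of a single expectation bound on the altered set, and treating small $n$ via a Sidon set --- are cosmetic.
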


\begin{lemma} \label{lem:good-certify}
    Suppose that $c$ is sufficiently close to $2$ and that $k$ is even. Then every $c$-good $k$-configuration certifies at most $(k^2 - 2k)/4$ pairs. 
\end{lemma}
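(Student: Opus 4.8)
The plan is to follow the three-step stability argument outlined in Subsection \ref{subsec:overview}. Write $c = 2 - \delta$ with $\delta > 0$ small. The statement is that a $c$-good $k$-configuration $\cC$ certifies at most $(k^2 - 2k)/4 = k^2/4 - k/2$ pairs; equivalently, it has at least $k^2/4$ distinct differences when realized by $k$ distinct reals. The extremal example is the star of size $k$ (Example \ref{ex:star}), which certifies exactly $p^2 - p = k^2/4 - k/2$ pairs with $p = k/2$. So I want to show nothing beats a full star.

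\textbf{Step \ref{step:baseline} (a crude bound for $c$-good configurations).} First I would reprove, with $c$ in place of $2$, a weaker version of the bound: every $c$-good $k$-configuration certifies at most $(1 + O(\delta))(k^2/4 - k/2)$ pairs, or some similar statement where the multiplicative slack is controlled by $\delta$. The mechanism is the one behind \eqref{eqn:das-quadratic-upper}: if $\cC$ certifies many pairs, then it implies many difference equalities, and by a counting/rank argument these cannot all be "independent" — but $c$-lightness bounds how many variables a given number of independent equations can span, hence bounds the rank of the span of all implied equations, hence (via a careful accounting of how certified pairs contribute to that span) bounds the number of certified pairs. The key quantitative input is that $t$ independent implied equations span at least $ct + 1 = (2-\delta)t + 1$ variables; plugging $t \approx$ (rank of $\cC$) and comparing with $k$ gives rank $\le k/(2-\delta) - O(1)$, and each unit of rank can be "responsible" for a bounded number of certified pairs. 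I expect this step to be a direct adaptation of the argument from \cite{Das23} and not the main difficulty.

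\textbf{Step \ref{step:stability} (stability).} Next I would prove: if a $k$-configuration $\cC$ is collinearity-free and certifies at least $(1 - \eps)(k^2/4 - k/2)$ pairs (with the constant from Step \ref{step:baseline} absorbed), then $\cC$ contains a star of size at least $(1 - \eps')k$, where $\eps' \to 0$ as $\eps \to 0$. This is the heart of the matter. The natural approach is extremal-graph-theoretic: view the pairs $(i,j)$ certified by $\cC$ as edges of a graph $G$ on $[k]$; a full star of size $k$ corresponds (after relabeling, as in Example \ref{ex:star}) to the bipartite-like graph where odd-indexed vertices form one part, even-indexed the other, and $(2i, 2j-1), (2i, 2j)$ are edges for $j < i$ — roughly a complete bipartite graph $K_{p,p}$ with a perfect matching deleted. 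Having nearly $p^2 - p$ certified pairs forces $G$ to be close to such a graph; then I must upgrade this combinatorial closeness to the algebraic statement that the underlying equations actually form a star. The obstruction is that "certifying a pair" is weaker than "containing a specific difference equality of the star" — many different equation systems can certify the same pair. So I anticipate needing a structural lemma (likely proved in Sections \ref{sec:observations}--\ref{sec:huge-star}) saying that a collinearity-free configuration certifying almost-$K_{p,p}$-many pairs must, up to the few missing edges, be realized by a star; collinearity-freeness should rule out the degenerate ways of packing certified pairs into few equations. \textbf{This is the step I expect to be the main obstacle.}

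\textbf{Step \ref{step:huge-star} (finishing with a huge star).} Finally, assuming $\cC$ is $c$-good and contains a star $S$ on $(1 - \eps')k$ of the variables, I would show $\cC$ certifies at most $k^2/4 - k/2$ pairs. The idea: the remaining $\eps' k$ variables can each be "added to" $S$ (turning it into a larger star) or used to add extra equations; I want to show that adding them to the star is optimal for certifying pairs, and that once $S$ has all $k$ variables one cannot add any further equation without violating $c$-lightness (a full star of size $k$ has rank $k/2 - 1$ on $k$ variables, which is exactly on the boundary permitted by $c$-lightness for $c$ close to $2$, so any extra independent equation pushes it over). For the "optimal to add to the star" part I would argue that a variable not in $S$ contributes to at most as many newly-certified pairs by joining $S$ as it could by any other configuration-extension, using that $\cC$ is $c$-good (so the extra equations available are limited) and collinearity-free. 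Then choosing $\delta$ small enough relative to $\eps$ (so that Step \ref{step:baseline}'s bound combined with "not within $(1-\eps)$ of it" already gives fewer than $k^2/4$ certified pairs in the non-stable case) closes the loop. The arithmetic of chaining $\delta \ll \eps \ll \eps' \ll 1$ is routine once the three structural ingredients are in place.
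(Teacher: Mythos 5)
Your plan reproduces the paper's own three-step outline from Subsection \ref{subsec:overview}, but it defers exactly the parts where the proof actually lives, and the two mechanisms you do commit to are not correct. In Step \ref{step:stability}, you propose to treat the certified pairs as a graph $G$ on $[k]$ and argue that having nearly $p^2 - p$ edges forces $G$ to be close to the star's certification graph; no such forcing exists at the level of edge counts alone (many graphs on $k$ vertices have $\sim p^2 - p$ edges), and the paper does not argue this way. The structure is extracted algebraically, index by index: after passing to row echelon form, the pairs $(i,\bullet)$ certified at a fixed index are controlled by a collection $\cS$ of at most $i-d$ independent difference equalities containing $x_i$ (Lemma \ref{lem:baseline-technical}), and near-equality in that bound forces almost all certifications among earlier indices to come from sum-aligned $2$-implications sharing a common partner variable, i.e.\ from a star at $x_i$ (Lemmas \ref{lem:contained-in-s} and \ref{lem:intersect-r}, which in turn rest on the classification of minimal implications of size at most $4$ in Sections \ref{sec:observations}--\ref{sec:impls-at-i}). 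You explicitly flag that you would need such a structural lemma but do not supply it, so the heart of the argument is missing.

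In Step \ref{step:huge-star} your closing claim is wrong as stated: you assert that a full star of size $k$ is exactly on the $c$-lightness boundary, so ``any extra independent equation pushes it over.'' A star of size $k$ plus one extra independent equation gives $t = k/2$ independent equations on at most $k$ variables, and $c$-heaviness would require $k < ck/2 + 1$, i.e.\ $k < 2/(2-c)$; since $c < 2$ must be an absolute constant while $k$ is unbounded, this global rank count proves nothing for large $k$. The actual obstruction is local: an extra difference equality among the star's variables yields $t \le 4$ independent equations on only $2t$ variables, contradicting $c$-goodness because $2t < ct + 1$ for bounded $t$ (Lemma \ref{lem:within-star}); and controlling variables outside the star requires the separate analysis of Lemma \ref{lem:outside-star} (representatives, boxes), which you also leave to ``the extra equations available are limited.'' So the proposal is an outline consistent with the paper's strategy, but with the stability step unproved and the huge-star step resting on an argument that fails for large $k$.
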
 

(Note that equality holds for a star of size $k$, as seen in Example \ref{ex:star}.)

Together, Lemmas \ref{lem:random-constr} and \ref{lem:good-certify} immediately imply Theorem \ref{thm:main} --- if we take $A$ to be the set given by Lemma \ref{lem:random-constr}, then the number of distinct differences in any $k$-element subset $A' \subseteq A$ is $\binom{k}{2}$ minus the number of pairs certified by the $k$-configuration that the elements of $A'$ form. And Lemma \ref{lem:random-constr} guarantees that this $k$-configuration is $c$-good, so by Lemma \ref{lem:good-certify} this number of differences is at least $\binom{k}{2} - (k^2 - 2k)/4 = k^2/4$.  

\section{The random construction} \label{sec:random-constr}

In this section, we prove Lemma \ref{lem:random-constr} --- the statement that we can construct $n$-element sets $A$ with $O(n^c)$ differences which avoid all $c$-bad $k$-configurations --- using a random construction. (We say $A$ \emph{avoids} a $k$-configuration $\cC$ if there do not exist distinct $a_1, \ldots, a_k \in A$ such that $(a_1, \ldots, a_k)$ forms $\cC$.)

In order to avoid collinearity-inducing $k$-configurations, we need the following fact.

\begin{lemma} \label{lem:modified-behrend}
    Fix $\kappa > 0$. Then for all $n \in \NN$, there exists $S \subseteq \{1, \ldots, n\}$ of size $\abs{S} = n^{1 - o(1)}$ (where the asymptotic notation is as $\kappa$ is fixed and $n \to \infty$) such that for all nonzero $\alpha, \beta, \gamma \in \ZZ$ of magnitude at most $\kappa$, there do not exist distinct $s_1, s_2, s_3 \in S$ with $\alpha s_1 + \beta s_2 + \gamma s_3 = 0$. 
\end{lemma}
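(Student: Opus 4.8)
The plan is to derive Lemma~\ref{lem:modified-behrend} from the classical Behrend construction of sets avoiding three-term arithmetic progressions, by passing to a suitable sublattice-type subset. The key observation is that an equation $\alpha s_1 + \beta s_2 + \gamma s_3 = 0$ with $\alpha + \beta + \gamma = 0$ is an ``affine'' (i.e.\ translation-invariant) relation, so it is natural to look for a set that avoids all such relations at once. But we do not get $\alpha + \beta + \gamma = 0$ for free here: the lemma allows arbitrary nonzero $\alpha, \beta, \gamma$ of magnitude at most $\kappa$. So the first step is to handle the general-coefficient relation. The standard trick is to work inside an interval far from the origin: if $S \subseteq \{N, N+1, \ldots, 2N\}$ for large $N$, then any relation $\alpha s_1 + \beta s_2 + \gamma s_3 = 0$ with $|\alpha|, |\beta|, |\gamma| \leq \kappa$ forces $|\alpha + \beta + \gamma| \cdot N \leq |\alpha + \beta + \gamma| \cdot N \leq (\text{something}) \cdot N$, more precisely $|\alpha + \beta + \gamma| \leq 3\kappa \cdot (2N)/N = 6\kappa$, which does not immediately pin down $\alpha+\beta+\gamma$. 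A cleaner approach: since there are only finitely many triples $(\alpha,\beta,\gamma)$ with entries in $[-\kappa,\kappa]$, it suffices to avoid each one separately (intersecting finitely many sets each of density $n^{-o(1)}$ still has density $n^{-o(1)}$, via a greedy/pigeonhole argument or by repeatedly applying Behrend on nested dyadic scales). So I would reduce to: for each fixed nonzero $(\alpha, \beta, \gamma)$, find $S_{\alpha,\beta,\gamma} \subseteq \{1, \ldots, n\}$ of size $n^{1-o(1)}$ with no distinct solutions.

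For a single fixed triple, I would split into two cases. If $\alpha + \beta + \gamma \neq 0$, then the relation $\alpha s_1 + \beta s_2 + \gamma s_3 = 0$ cannot be satisfied by three elements all lying in a short interval $\{N, \ldots, N + N/(100\kappa)\}$ far from the origin — because then $|\alpha s_1 + \beta s_2 + \gamma s_3| \geq |\alpha + \beta + \gamma| \cdot N - 3\kappa \cdot N/(100\kappa) > 0$. Since $\{1, \ldots, n\}$ contains such an interval of length $\Omega(n/\kappa)$, just taking $S$ to be that interval gives $|S| = \Omega(n) = n^{1-o(1)}$. If $\alpha + \beta + \gamma = 0$, then after dividing by $\gcd$ we have a genuine translation-invariant three-variable relation, which (up to relabeling variables) contains a ``generalized $3$-AP'' structure; here I would invoke the Behrend-type bound. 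Concretely, if $\alpha + \beta + \gamma = 0$ with all nonzero, then two of them have the same sign, say $\alpha, \beta > 0 > \gamma$ and $\gamma = -(\alpha+\beta)$, so the equation reads $\alpha s_1 + \beta s_2 = (\alpha + \beta) s_3$, i.e.\ $s_3$ is a weighted average of $s_1$ and $s_2$. A set avoiding nontrivial solutions to $\alpha s_1 + \beta s_2 = (\alpha+\beta) s_3$ can be built exactly as in Behrend's argument (points on a sphere in a high-dimensional grid, read in a large base $q$ chosen larger than $\alpha + \beta + \gamma$ in absolute value, combined with convexity of the sphere): convexity gives that $\alpha s_1 + \beta s_2 = (\alpha + \beta) s_3$ forces $s_1 = s_2 = s_3$. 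This yields $|S| = n^{1 - o(1)}$ as $q$ (equivalently, the dimension) is tuned, with the $o(1)$ depending on $\kappa$.

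Then I would assemble the pieces: let $(\alpha^{(1)}, \beta^{(1)}, \gamma^{(1)}), \ldots, (\alpha^{(M)}, \beta^{(M)}, \gamma^{(M)})$ enumerate all triples of nonzero integers of magnitude $\leq \kappa$ (so $M = M(\kappa)$ is a constant). I would build $S$ iteratively: start with $\{1, \ldots, n\}$ and at step $j$ pass to a subset avoiding solutions to the $j$-th relation while retaining an $n^{-o(1)}$ fraction. The one subtlety is that the Behrend-type construction above naturally produces a \emph{specific} large subset of $\{1, \ldots, n\}$ rather than giving a ``relative Behrend'' statement inside an arbitrary dense set. To handle this, I would instead note that all $M$ constructions can be carried out simultaneously: take the Behrend sphere construction in base $q$ with $q$ chosen larger than $2\kappa$, so that convexity rules out \emph{every} relation of the form $\alpha s_1 + \beta s_2 + \gamma s_3 = 0$ with $|\alpha|,|\beta|,|\gamma| \le \kappa$ and $\alpha+\beta+\gamma = 0$ in one shot (the convexity argument is uniform over such coefficient vectors); and simultaneously restrict to an interval bounded away from $0$ to kill the $\alpha + \beta + \gamma \neq 0$ cases. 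The main obstacle, and the place requiring the most care, is exactly this uniformity: verifying that a single Behrend-type set, with parameters depending only on $\kappa$, kills all the relevant relations at once, rather than needing a separate set per coefficient vector and worrying about whether intersections stay large. Once that uniformity is in hand, the size bound $|S| = n^{1 - o(1)}$ follows from the usual Behrend computation, with the rate of the $o(1)$ allowed to depend on $\kappa$ as the statement permits.
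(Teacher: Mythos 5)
Your final plan is essentially the paper's proof: a single Behrend-type construction --- integer points on a sphere in $[m]^d$, read off in a base depending on $\kappa$ --- so that any relation $\alpha s_1+\beta s_2+\gamma s_3=0$ with $|\alpha|,|\beta|,|\gamma|\le\kappa$ and $\alpha+\beta+\gamma=0$ descends coordinatewise to $\alpha\vec u+\beta\vec v+\gamma\vec w=0$ on the sphere, which strict convexity (equivalently, a line meets a sphere at most twice) rules out for distinct points; the uniformity over coefficient triples that you single out as the main obstacle is exactly what the choice of base provides. One concrete correction: the base cannot be a constant such as $2\kappa$ (or ``larger than $\alpha+\beta+\gamma$ in absolute value''); it must exceed roughly $3\kappa m$, where $m$ bounds the digits, so that the combinations $\alpha u_i+\beta v_i+\gamma w_i\in[-3\kappa m,3\kappa m]$ cannot interact across digit positions --- the paper takes base $16\kappa m$. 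With a constant base the carry-free step fails and you cannot deduce the coordinatewise relation, so this parameter must scale with $m$; once it does, the size computation is the usual Behrend one and yields $n^{1-o(1)}$ with the $o(1)$ depending on $\kappa$. Your extra step --- translating the set into an interval of length $\asymp n/\kappa$ bounded away from $0$ to kill triples with $\alpha+\beta+\gamma\neq 0$ (translation is harmless since the sum-zero relations are translation invariant) --- is sound, and in fact covers the literal statement more completely than the paper's written proof, which only treats sum-zero triples (the only case needed in the application to collinearity-inducing configurations). You were also right to abandon the ``handle each triple separately and intersect'' idea in favor of the one-shot construction.
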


The construction for Lemma \ref{lem:modified-behrend} is a direct adaptation of the construction of $3$-AP-free sets by Behrend \cite{Beh46} (avoiding $3$-APs corresponds to taking $\kappa = 2$, but Behrend's construction actually works for any constant $\kappa$, with the same proof), so we defer it to Appendix \ref{app:modified-behrend}.

\begin{proof}[Proof of Lemma \ref{lem:random-constr}]
    Assume that $n$ is sufficiently large (with respect to $k$ and $c$); we will then construct $A$ with the desired properties such that $\abs{A} = n$ and $\abs{A - A} \leq n^c$. 

    We will first \emph{deterministically} avoid all collinearity-inducing $k$-configurations using Lemma \ref{lem:modified-behrend}. By definition, for every collinearity-inducing $k$-configuration $\cC$, there exist nonzero $\alpha, \beta, \gamma \in \QQ$ with $\alpha + \beta + \gamma = 0$ such that $\cC$ implies an equation of the form \[\alpha x_{i_1} + \beta x_{i_2} + \gamma x_{i_3} = 0.\] There are only finitely many $k$-configurations, so by clearing denominators and taking a maximum over all collinearity-inducing $k$-configurations, we can find a constant $\kappa$ (depending on $k$) such that for every collinearity-inducing $k$-configuration $\cC$, there exist such $\alpha, \beta, \gamma \in \ZZ$ with magnitude at most $\kappa$. 
    
    Now let $S$ be the set given by Lemma \ref{lem:modified-behrend} for this value of $\kappa$ and with $n$ replaced by $\sfloor{n^c}$. Then $S$ avoids all collinearity-inducing $k$-configurations, and we have $\abs{S} = n^{c - o(1)}$ and $\abs{S - S} \leq n^c$. 

    We will now use randomness to find some $A \subseteq S$ which avoids all $c$-heavy $k$-configurations. Let \[\rho = \frac{2n}{\abs{S}} = n^{1 - c + o(1)}.\] Define $B$ to be a $\rho$-random subset of $S$ (where we include each element of $S$ independently with probability $\rho$), so that $\EE[\abs{B}] = 2n$. Our goal is to delete a small number of elements from $B$ such that the resulting set avoids all $c$-heavy $k$-configurations. 
    
    For this, first consider a \emph{specific} $c$-heavy $k$-configuration $\cC$ which we wish to avoid. Since $\cC$ is $c$-heavy, it implies some collection $\cT$ of $t$ independent equations on $v$ variables, for some $v < ct + 1$. Then it suffices to look at all solutions to $\cT$ in $B$ with distinct entries (looking at only the $v$ variables present in $\cT$) and delete one element from each --- if $b_1, \ldots, b_k \in B$ are distinct and form $\cC$, then the $v$ numbers $b_i$ corresponding to variables $x_i$ in $\cT$ must form a solution to $\cT$. 

    To bound the expected number of elements this causes us to delete, first note that the number of solutions to $\cT$ in $S$ is at most $\abs{S}^{v - t}$. One way to see this is to imagine putting $\cT$ into row echelon form; then we have $v - t$ free variables, each of whose values can be chosen in $\abs{S}$ ways, and the values of the remaining variables are determined in terms of the values of the free variables.

    Meanwhile, for any such solution with distinct entries, the probability that all $v$ of its entries are placed into $B$ is $\rho^v$. So the expected number of solutions to $\cT$ in $B$ with distinct entries is at most \[\rho^v \abs{S}^{v - t} = n^{v(1 - c) + o(1)} \cdot n^{c(v - t) - o(1)} = n^{v - ct + o(1)} = o(n).\] (Note that $v$ and $c$ are at most $k$ and $v - ct < 1$, so we can bound $v - ct$ away from $1$ by a constant only depending on $k$.) 

    Now let $B'$ be the set obtained by taking $B$ and performing such deletions for \emph{every} $c$-heavy $k$-configuration $\cC$. Each $c$-heavy $k$-configuration $\cC$ causes us to delete $o(n)$ elements in expectation, and the total number of $k$-configurations is a constant only depending on $k$, so the \emph{total} number of elements we delete is also $o(n)$ in expectation; this means \[\EE[\abs{B'}] = \EE[\abs{B}] - o(n) = 2n - o(n) \geq n\] (assuming that $n$ is sufficiently large). Finally, this means there is some outcome of the randomness under which $\abs{B'} \geq n$, and we can obtain $A$ from $B'$ by deleting elements arbitrarily until it has size exactly $n$. 
\end{proof}

\section{Backbone of the proof of Lemma \ref{lem:good-certify}} \label{sec:backbone}

In this section, we present the high-level argument used to prove Lemma \ref{lem:good-certify} --- the statement that no $c$-good $k$-configuration certifies more pairs than a star of size $k$ does --- following the outline in Subsection \ref{subsec:overview}. Some steps of this argument require lemmas whose proofs are fairly technical; we will defer these proofs to Sections \ref{sec:observations}--\ref{sec:huge-star}.

Throughout the argument, we will have two parameters $\eps$ and $k_0$, in addition to $c$. We think of these parameters in the following way. 
\begin{itemize}
    \item We use $\eps$ to denote the error parameter for our stability argument; we think of it as a sufficiently small absolute constant. 
    \item For most of the argument, we will assume that $k$ is sufficiently large with respect to $\eps$. (Step \ref{step:baseline} of the outline in Subsection \ref{subsec:overview} will be enough to finish the proof for small $k$.) We use $k_0$ to indicate what `sufficiently large' means; so $k_0$ is a large constant depending on $\eps$.
    \item We choose $c$ such that $2 - c$ is small with respect to $\eps$ and $k_0$. 
\end{itemize}
Setting $\eps \leq 1/4096$, $k_0 \geq 32/\eps^2$, and $c \geq 2 - \min\{\eps^2/32, 2/k_0\}$ works; this allows us to get $c = 2 - 2^{-29}$.

\subsection{The technical lemmas} \label{subsec:technical}

In this subsection, we collect the statements of the lemmas whose proofs we defer. First, for Step \ref{step:baseline} of the outline (the baseline bound), we need the following lemma. 

\begin{lemma} \label{lem:baseline-technical}
    Fix $i \in [k]$, and let $\cS$ be a $c$-good collection of independent difference equalities on $x_1$, \ldots, $x_i$ which all contain $x_i$, with $\abs{\cS} \leq s$. Then $\cS$ certifies at most $2s$ pairs $(i, \bullet)$. 
\end{lemma}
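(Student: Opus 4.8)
The plan is to pass to linear algebra over $\QQ^{[i]}$. Because $\cS$ is $c$-good, every difference equality it implies involves four distinct variables, so each of its $s' := |\cS| \le s$ equations can be written $x_i + x_{\alpha_m} = x_{\beta_m} + x_{\gamma_m}$ with $\alpha_m, \beta_m, \gamma_m \in [i-1]$ distinct; let $V$ be the span of the contents, so $\dim V = s'$, and (combining validity with collinearity-freeness) $V$ contains no nonzero vector supported on $\le 3$ coordinates. First I would check that the two degenerate branches in the definition of ``certifies'' cannot occur for a $c$-good configuration — one of them yields $x_a = x_b$ (invalidity), the other a three-variable equation (collinearity) — so that $\cS$ certifies $(i,j)$ with $j < i$ precisely when $V$ contains a \emph{nice} vector $e_i + e_c - e_a - e_b$ (with $a,b,c \in [i-1]$ distinct) having $j \in \{a,b\}$. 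Each nice vector $v$ certifies exactly the two pairs indexed by its two negative coordinates $\mathrm{neg}(v) := \{a,b\}$, so the set $P$ of certified $j$'s equals $\bigcup_v \mathrm{neg}(v)$ over all nice $v \in V$: a union of two-element sets.

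The lemma then reduces to the claim that one can choose \emph{linearly independent} nice vectors $v_1,\dots,v_r \in V$ whose negative-coordinate sets already cover $P$, since then $r \le \dim V = s' \le s$ and $|P| \le 2r \le 2s$. To produce them, I would start from any maximal linearly independent set $B$ of nice vectors; since $B$ spans every nice vector, $P$ lies in the union of the supports of $B$, which gives the weaker bound $|P| \le 3|B| \le 3s$ (each nice vector has support $\{i,c,a,b\}$, with $i \notin P$). To kill the spurious factor $3/2$ I would run an exchange argument, arranging that the single ``extra positive coordinate'' $c$ of a basis vector $e_i + e_c - e_a - e_b$ that happens to lie in $P$ is in fact a negative coordinate of some other basis vector: if $c^* \in P$ is a positive coordinate of some $v \in B$ but a negative coordinate of no member of $B$, take a nice $w \in V$ with $c^* \in \mathrm{neg}(w)$, expand $w$ in $B$, observe that every basis vector has $c^*$-coordinate in $\{0,+1\}$ whereas $w$ has it equal to $-1$, hence some basis vector $v'$ with positive coordinate $c^*$ occurs with nonzero coefficient, and swap $v'$ for $w$. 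Phrased cleanly, this is matroid intersection: one wants a common independent set of the linear matroid of the nice vectors and the graphic matroid of the ``certification graph'' (one edge $\mathrm{neg}(v)$ per nice vector) that is spanning in the latter — such a set is a spanning forest with at most as many edges as there are (necessarily nonempty) components, so it covers $P$ with at most $2\dim V$ vertices.

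I expect the real obstacle to be exactly this last step: showing the exchange terminates with $\bigcup_{v \in B}\mathrm{neg}(v) \supseteq P$, equivalently verifying the rank inequality that for every family $S$ of nice vectors the linear rank of $S$ is at least the graphic rank of $\{\mathrm{neg}(v) : v \in S\}$. The subtlety is that after a swap the negative coordinates of the removed vector $v'$ are only guaranteed to re-enter the span of the new basis, not to reappear as negative coordinates, so a naive potential such as ``the number of covered elements of $P$'' need not increase. This is where collinearity-freeness is essential: because $V$ has no small-support vectors, whenever a combination of nice vectors is again nice there is forced cancellation, and that is what should let one control $\mathrm{neg}(v')$ and set up a genuinely monotone progress measure. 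For the finitely many small values of $s$ the exchange can be avoided by direct inspection.
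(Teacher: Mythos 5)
Your linear-algebra reformulation is sound as far as it goes: for a $c$-good $\cS$ the two degenerate branches of the definition of certification are indeed impossible, so the pairs $(i,\bullet)$ certified by $\cS$ are precisely the negative coordinates of the ``nice'' vectors $e_i + e_c - e_a - e_b$ lying in $V$, and it would suffice to cover all of these by the negative-coordinate sets of a linearly independent family of nice vectors. But that covering step --- which you yourself identify as the real obstacle --- is exactly what is missing, and the ``clean'' route you propose for it is false. The rank inequality underlying your matroid-intersection formulation (linear rank of every family of nice vectors at least the graphic rank of its negative-coordinate edges) fails: take $\cS = \{x_i - x_1 - x_2 + x_3 = 0,\; x_i + x_1 - x_4 - x_5 = 0,\; x_i + x_3 - x_4 - x_6 = 0\}$, the $3$-implication of Figure \ref{fig:3-impl}, which is $c$-good, independent, and implies the additional nice vector $e_i + e_6 - e_2 - e_5$. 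The four nice vectors in $V$ have linear rank $3$, but their negative-coordinate edges $\{1,2\}$, $\{4,5\}$, $\{4,6\}$, $\{2,5\}$ form a tree on the five vertices $\{1,2,4,5,6\}$, so the graphic rank is $4$; moreover the only edge set spanning this graphic matroid is all four edges, which are linearly dependent, so no common independent set spanning the graphic matroid exists at all. (A linearly independent edge \emph{cover} of size $3$ does exist in this example, but that weaker covering statement is precisely what remains unproved.) Your fallback exchange argument is left with an explicitly acknowledged, unresolved termination problem, so the proof is incomplete at its crucial step.

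For comparison, the paper never attempts such a covering argument; it controls the implications directly. Since every equation of $\cS$ contains $x_i$, a slot-counting argument combined with $c$-lightness shows that every minimal implication inside $\cS$ has size at most $4$ (Lemma \ref{lem:impls-are-small}); the parity structure of $2$-good minimal implications (Lemma \ref{lem:2-good-min-impl}) shows that only implications of size $1$ or $3$ produce difference equalities containing $x_i$; distinct $3$-implications are disjoint (Claim \ref{claim:3-impls-disjoint}); and each $3$-implication certifies at most $5 < 2 \cdot 3$ pairs $(i,\bullet)$ (Claim \ref{claim:3-impls-cert-5}). Partitioning $\cS$ into $3$-implications and lone equations then gives the bound $2s$. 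Note that your covering claim, even if true, already meets its essential difficulty inside a single $3$-implication --- in the example above, ruling out the bad configuration amounts to the ``at most $5$ pairs'' bound of Claim \ref{claim:3-impls-cert-5} --- so the matroid framing does not circumvent the structural analysis; it would have to be supplemented by it.
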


When we say that $\cS$ certifies at most $2s$ pairs $(i, \bullet)$, we mean there are at most $2s$ indices $j < i$ for which $\cS$ certifies $(i, j)$. 

The intuition behind Lemma \ref{lem:baseline-technical} is that a \emph{single} difference equality containing $x_i$ certifies two pairs $(i, \bullet)$ --- for example, $x_i - x_1 = x_2 - x_3$ certifies $(i, 1)$ and $(i, 2)$. So Lemma \ref{lem:baseline-technical} would be immediate if $\cS$ did not imply any difference equalities containing $x_i$ other than the ones already in $\cS$. The difficulty comes from the fact that $\cS$ \emph{can} imply additional difference equalities, which could potentially certify extra pairs $(i, \bullet)$. However, it turns out that when this happens, some of the difference equalities in $\cS$ must certify the \emph{same} pairs $(i, \bullet)$, and this overlap compensates for those extra pairs. 

For Step \ref{step:stability} of the outline (the stability argument), we need the following two lemmas. 

\begin{lemma} \label{lem:contained-in-s}
    Fix $i \in [k]$ and $s \geq 1/\eps$, and let $\cS$ be a $c$-good collection of independent difference equalities on $x_1$, \ldots, $x_i$ which all contain $x_i$, with $\abs{\cS} \leq s$. Suppose that $\cS$ certifies at least $2(1 - \eps)s$ pairs $(i, \bullet)$ and does not imply any star of size at least $2(1 - 16\eps)s$. Then $\cS$ certifies at most $(1 - 2\eps)s^2$ pairs $(i', j') \in [i - 1]^2$. 
\end{lemma}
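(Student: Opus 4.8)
The plan is to combine the baseline bound of Lemma~\ref{lem:baseline-technical} with a structural analysis of $\cS$, organized around the case where $\cS$ is in ``general position''. Write $m = \abs{\cS}$, and record each equation of $\cS$ as $E_r\colon x_i + x_{c_r} = x_{a_r} + x_{b_r}$ for $r \in [m]$, where $a_r, b_r, c_r \in [i-1]$ are distinct (this uses that $\cS$ is $c$-good, hence valid and collinearity-free, so every difference equality it contains has four distinct variables). Since $E_r$ implies both $x_i - x_{a_r} = x_{b_r} - x_{c_r}$ and $x_i - x_{b_r} = x_{a_r} - x_{c_r}$, it certifies the pairs $(i, a_r)$ and $(i, b_r)$; applying Lemma~\ref{lem:baseline-technical} with $s$ replaced by $m$ shows $\cS$ certifies at most $2m$ pairs $(i, \bullet)$, so the hypothesis forces $m \geq (1 - \eps)s$. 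I would also use the elementary reduction that a pair $(i', j') \in [i-1]^2$ is certified by $\cS$ if and only if it is certified by the projection $\cS'$ --- the $k$-configuration consisting of all difference equalities on $x_1, \ldots, x_{i-1}$ implied by $\cS$ --- since any equation witnessing such a certification involves only variables of index at most $i-1$. So it suffices to bound the number of pairs certified by $\cS'$.

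The core of the argument is the general-position case, where $a_1, b_1, \ldots, a_m, b_m$ are $2m$ distinct indices, none of which equals any $c_r$. Here one checks directly that the only difference equalities implied by $\cS$ and not involving $x_i$ are the ``star relations'' $x_{a_r} + x_{b_r} = x_{a_{r'}} + x_{b_{r'}}$ for pairs $r, r'$ with $c_r = c_{r'}$: a $\QQ$-combination $\sum \lambda_r E_r$ with vanishing $x_i$-coefficient has content $\sum \lambda_r(x_{c_r} - x_{a_r} - x_{b_r})$, and since the $a_r$ and the $b_r$ are all distinct, matching this against a difference equality (which has exactly four nonzero coefficients, as $\cS$ is $c$-good) forces exactly two of the $\lambda_r$ to be nonzero, with opposite values to kill the $x_i$-coefficient; general position then leaves only the stated possibility. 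Thus $\cS'$ is a disjoint union of stars, one of size $2r_v$ for each value $v$ assumed by the $c_r$, where $r_v$ is the number of $r$ with $c_r = v$; such a star certifies $r_v^2 - r_v$ pairs by Example~\ref{ex:star}, and distinct stars (living on disjoint variable sets) produce no cross-certifications. But $\cS$ implies the star $\{x_{a_r} + x_{b_r} : c_r = v\}$ of size $2r_v$, so the no-large-star hypothesis gives $r_v < (1 - 16\eps)s$ for every $v$; hence $\cS'$ certifies at most $\sum_v (r_v^2 - r_v) \leq (\max_v r_v)\sum_v r_v \leq (1 - 16\eps)s \cdot m \leq (1 - 16\eps)s^2 \leq (1 - 2\eps)s^2$. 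The slack between $16\eps$ and $2\eps$ is deliberate: it is there to absorb the non-general-position case.

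The main obstacle is therefore reducing the general case to this one. Coincidences among the $a_r, b_r$, or between a $c_r$ and some $a_{r'}$ or $b_{r'}$, both spoil the clean description of $\cS'$ (for instance $a_r = a_{r'}$ with $c_r \neq c_{r'}$ forces the extra difference equality $x_{c_r} - x_{c_{r'}} = x_{b_{r'}} - x_{b_r}$) and could conceivably let $\cS'$ certify more pairs. The point is that the two hypotheses control the two sources of non-genericity separately: coincidences among the $a_r, b_r$ force two equations of $\cS$ to certify the same pair $(i, \bullet)$ with no compensating gain, so the first hypothesis (via the near-equality $m \geq (1 - \eps)s$ extracted from Lemma~\ref{lem:baseline-technical}) caps their number at $O(\eps s)$, while coincidences in the $c_r$ role are exactly what builds stars, so the no-large-star hypothesis caps how large those can grow. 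I would then argue --- either by directly accounting for how much $O(\eps s)$ coincidences can inflate the count, or by a cleaning step that deletes $O(\eps s)$ equations and variables to reach a genuinely general-position sub-configuration while changing the number of certified pairs in $[i-1]^2$ by only $O(\eps s^2)$ --- that the bound of the previous paragraph survives, using the $16\eps$-versus-$2\eps$ slack and the hypothesis $s \geq 1/\eps$ to make all the $O(\eps s)$- and $O(\eps s^2)$-type estimates meaningful. Getting this case analysis clean, and tracking the constants through it, is the technical heart of the proof.
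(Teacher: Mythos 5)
Your general-position case is correct, and in that case your direct characterization of the difference equalities on $x_1, \ldots, x_{i-1}$ implied by $\cS$ (only the ``star relations'' between equations sharing the same $c_r$) agrees with the paper's count of sum-aligned $2$-implications. However, the reduction of the general case to this one --- which you explicitly defer --- is precisely where the paper's proof lives, and your sketch of that reduction has two concrete gaps. First, your assertion that coincidences among the $a_r, b_r$ are capped at $O(\eps s)$ because two equations then ``certify the same pair $(i, \bullet)$ with no compensating gain'' is not immediate: $\cS$ can imply further difference equalities containing $x_i$ (the paper's $3$-implications), and these certify additional pairs $(i, \bullet)$ that could offset the losses from such coincidences. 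Ruling this out is the content of Claims \ref{claim:3-impls-cert-5}, \ref{claim:3-impls-disjoint} and \ref{claim:baseline-technical-extra}, which in turn rest on Lemma \ref{lem:impls-are-small} --- the bound that any minimal implication inside $\cS$ has at most four equations --- and that bound uses $c$-lightness in an essential way. Your proposal never invokes the $c$-light condition beyond ``four distinct variables per equality,'' which is a sign that the hard part has not been engaged.

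Second, the mixed coincidences $c_r = a_{r'}$ (or $c_r = b_{r'}$) are controlled by neither hypothesis, contrary to your claim that the two hypotheses handle the two sources of non-genericity: a ``chain'' with $c_{r+1} = a_r$ and all other indices distinct certifies the full $2s$ pairs $(i, \bullet)$ and implies no star of size larger than $4$, yet contains $\Theta(s)$ such coincidences. So a cleaning step that deletes only $O(\eps s)$ equations cannot reach your general position, and ``directly accounting'' requires showing that these coincidences produce few extra difference equalities on $x_1, \ldots, x_{i-1}$ --- in the paper this is the analysis of $4$-implications (Claim \ref{claim:4-impls-overlap} together with the disjointness count $s/4 + 2\eps s^2$), again relying on the size-at-most-four bound and the $2$-full machinery of Section \ref{sec:observations}. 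As written, your argument proves the lemma only in general position and asserts, rather than establishes, the step that constitutes the technical heart of the paper's proof.
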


The intuition behind Lemma \ref{lem:contained-in-s} is that if $\cS$ consisted of $s$ equations defining a star of size $2(s + 1)$ (e.g., $\cS = \{x_i + x_1 = x_2 + x_3, \, x_i + x_1 = x_4 + x_5, \, \ldots, \, x_i + x_1 = x_{2s} + x_{2s + 1}\}$), then $\cS$ would certify exactly $2s$ pairs $(i, \bullet)$ and exactly $s(s - 1)$ pairs $(i', j') \in [i - 1]^2$. Lemma \ref{lem:baseline-technical} states that $2s$ is the maximum possible number of pairs $(i, \bullet)$ that \emph{any} collection $\cS$ of size at most $s$ could certify; and Lemma \ref{lem:contained-in-s} states that if $\cS$ is close to this maximum, then either it must be similar to a star of size $2(s + 1)$ (in that it must imply a star of nearly this size), or the number of pairs $(i', j') \in [i - 1]^2$ that it certifies must be substantially smaller. 

\begin{lemma} \label{lem:intersect-r}
    Fix $i \in [k]$, and let $\cS$ be a collection of difference equalities containing $x_i$ and $\cR$ a collection of difference equalities not containing $x_i$ such that the following conditions hold.
    \begin{itemize}
        \item $\cR \cup \cS$ is $c$-good and linearly independent. 
        \item Every difference equality containing $x_i$ that is implied by $\cR \cup \cS$ is actually implied by $\cS$ alone. 
    \end{itemize}
    Then the number of pairs certified by $\cR \cup \cS$ but not $\cS$ is at most $2048\abs{\cR}^2$.
\end{lemma}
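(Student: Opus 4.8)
The plan is to show that every pair newly certified by $\cR\cup\cS$ lies among a set $W$ of at most $64\abs\cR$ variables; then the number of such pairs is at most $\binom{\abs W}{2}<2048\abs\cR^2$.

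I first unpack the second hypothesis. Suppose $\cR\cup\cS$ certifies a pair $(i',j')$ via a witness difference equality $D$, i.e.\ $\cR\cup\cS$ implies $x_{i'}-x_{j'}=x_a-x_b$ for some $(a,b)$ meeting the ordering condition. Since $\cR\cup\cS$ is $c$-good, $D$ has four distinct variables, so the ordering condition forces $a,b<i'$. If $D$ involves $x_i$ --- which in particular happens whenever $i'=i$ or $j'=i$ --- then by hypothesis $D$ is implied by $\cS$ alone, and since the ordering condition depends only on $D$ and the indices, $\cS$ already certifies $(i',j')$. Hence every newly certified pair avoids the index $i$ and has an $x_i$-free witness. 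Such a witness lies in the $x_i$-free part of $\langle\cR\cup\cS\rangle$, which (since $\cR$ is $x_i$-free and $\cR\cup\cS$ is linearly independent) equals $\cS'\oplus\langle\cR\rangle$ where $\cS'\subseteq\langle\cS\rangle$ is the $x_i$-free part of $\langle\cS\rangle$; and it does not lie in $\cS'$, since it is not implied by $\cS$. Finally, replacing $\cR$ by a maximal family $\cR^\ast$ of difference equalities implied by $\cR\cup\cS$ for which $\cS\cup\cR^\ast$ is linearly independent, we get $\abs{\cR^\ast}\le\abs\cR$, the pair $(\cS,\cR^\ast)$ still satisfies the hypotheses, and every witness as above lies in $\langle\cS\cup\cR^\ast\rangle$ --- so every pair newly certified by $\cR\cup\cS$ is newly certified by $\cS\cup\cR^\ast$. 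Thus I may assume $\cR$ is itself a linearly independent family of at most $\abs\cR$ difference equalities with $\cR\cup\cS$ linearly independent and $c$-good.

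Now let $W$ be the set of variables occurring in some witness of a newly certified pair, so the newly certified pairs form a subset of $\binom W2$; it remains to show $\abs W\le 64\abs\cR$. Write each such witness as $D=R_D+S_D$ with $R_D\in\langle\cR\rangle\setminus\{0\}$ (nonzero since $D\notin\cS'$) and $S_D\in\cS'$. On the coordinates outside $V(\cR)$ (the set of variables occurring in $\cR$) we have $D=S_D$, so the variables of $D$ lying outside $V(\cR)$ are exactly those of $S_D$ lying outside $V(\cR)$, a set of size at most four. Hence $W\subseteq V(\cR)\cup\bigcup_D\big(\text{variables of }S_D\text{ outside }V(\cR)\big)$, and since $\abs{V(\cR)}\le 4\abs\cR$, everything reduces to bounding the number of ``extra'' variables --- those in $W\setminus V(\cR)$ --- by a constant multiple of $\abs\cR$.

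This last step is the main obstacle, and the plan for it is a cleaning argument driven by $c$-lightness. For each extra variable $v$, fix a witness $D_v$ containing $x_v$, and pass to a maximal linearly independent subfamily $\cD$ of $\{D_v\}$ with $\cR\cup\cD$ linearly independent. On one hand, $\cR\cup\cD$ is $c$-good with $\abs\cR+\abs\cD$ independent equations, all of whose variables lie in $V(\cR)\cup W$, so $c$-lightness --- which, as $2-c$ is tiny, forces any $t$ independent implied equations to span at least $2t+1$ variables for every $t$ below a large threshold --- constrains $\abs\cD$ in terms of $\abs\cR$ and $\abs{W\setminus V(\cR)}$. On the other hand, every extra variable already appears in $\cD$: since $D_v\in\langle\cR\cup\cD\rangle$ and $x_v$ has zero coefficient throughout $\cR$, its nonzero coefficient in $D_v$ is contributed by $\cD$. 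The difficulty is that a naive combination of these facts is circular --- a difference equality carries four variables while $c$-lightness charges only about two per equation --- so one must also use the structure of $\cS'$ (the $x_i$-free consequences of $\cS$, constrained by collinearity-freeness and by $c$-lightness of $\cR\cup\cS$) to show that the extra variables contributed by the $S_D$-parts are localized near the small set $V(\cR)\cap V(\cS)$ rather than spread throughout $V(\cS)$. When $\cS$ is a star centered at $x_i$ this localization is transparent: the second hypothesis prevents $\cR$ from extending the star, so each $S_D$ involves only legs that $V(\cR)$ already meets, of which there are at most $\abs{V(\cR)}$. Once the extra variables are bounded by $O(\abs\cR)$ we obtain $\abs W\le 64\abs\cR$, and hence at most $\binom{64\abs\cR}{2}<2048\abs\cR^2$ newly certified pairs, as claimed.
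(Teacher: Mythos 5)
Your reduction of the lemma to a variable count is sound and mirrors the paper's first move: newly certified pairs avoid $x_i$, their witnesses are four-variable difference equalities not implied by $\cS$ alone, and if all such witnesses live on a set of at most $64\abs{\cR}$ variables then the pair count is below $2048\abs{\cR}^2$. But the heart of the lemma is precisely the step you leave as a plan: showing that the variables outside $V(\cR)$ touched by these witnesses number only $O(\abs{\cR})$. You candidly note that the naive combination of $c$-lightness with your family $\cD$ is circular (four variables per equation versus roughly two charged by $c$-lightness), and the ``localization via the structure of $\cS'$'' that you invoke to break the circularity is only described heuristically, with a transparent argument given solely in the special case where $\cS$ is a star. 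Since in the application $\cS$ is a general collection of equations through $x_i$ (it is exactly the non-star-like case, handled by Lemma \ref{lem:contained-in-s}, that matters in the stability step), this is a genuine gap, not a routine detail.

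The mechanism the paper uses, and which your sketch lacks, is to run the slot-counting \emph{through the hypothesis that no implied difference equality containing $x_i$ needs $\cR$}. Consider a minimal implication $\cT$ meeting $\cR$ with $r$ equations from $\cR$ and $s$ from $\cS$: every equation of $\cS$ in $\cT$ contains $x_i$, but the produced equality does not (by the second hypothesis), so among the $4(t+1)$ variable slots, $s$ are consumed by $x_i$ without creating new variables, giving $v - 2t \leq 2 - s/2 \leq 2r - s/2$ (Lemma \ref{lem:indiv-box}); a refined version handles increments when such ``boxes'' overlap, with the delicate case $r = 0$, $s = 1$ again killed by the second hypothesis together with validity and collinearity-freeness (Lemma \ref{lem:overlap-box}). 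Processing boxes one at a time yields $v^* - 2t^* \leq 2r^* - s^*/4$ for the union, and comparing with $c$-lightness ($v^* \geq ct^*$) forces $s^* \leq 15r^*$, i.e., the union of all relevant minimal implications has size at most $16\abs{\cR}$ and hence at most $64\abs{\cR}$ variables. This per-$\cS$-equation penalty coming from $x_i$ is exactly what breaks the circularity you identified; without it (or a substitute), your argument does not close.
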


When we apply Lemma \ref{lem:intersect-r}, $\cS$ will be as in Lemma \ref{lem:contained-in-s}, and $\cR$ will be much smaller than $\cS$ (i.e., we will have $\abs{\cS} \leq s$ and $\abs{\cR} \leq \eps s$). Lemma \ref{lem:contained-in-s} gives us a bound slightly smaller than $s^2$ for the number of pairs certified by $\cS$ alone; the intuition behind Lemma \ref{lem:intersect-r} is that it means taking $\cR$ into account will have very little effect on this bound (it will only give us $2048\eps^2 s^2$ extra pairs, which is much smaller than $s^2$). 

Finally, for Step \ref{step:huge-star}, we need the following lemma. 

\begin{lemma} \label{lem:outside-star}
    Let $\cP = \{x_1 + x_2 = x_3 + x_4, \, x_1 + x_2 = x_5 + x_6, \, \ldots, \, x_1 + x_2 = x_{2p - 1} + x_{2p}\}$, and fix $i > 2p$. Let $\cS$ be a collection of difference equalities containing $x_i$ such that $\cS \cup \cP$ is independent and $c$-good and $\cS \cup \cP$ does not imply $x_1 + x_2 = x_i + x_j$ for any $j > 2p$. Then $\cS \cup \cP$ certifies at most $6\abs{\cS}$ pairs $(i, \bullet)$. 
\end{lemma}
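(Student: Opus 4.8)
The plan is to control the set $\cD$ of all difference equalities that contain $x_i$ and are implied by $\cS \cup \cP$. Since $\cS \cup \cP$ is $c$-good, every member of $\cD$ involves four distinct variables, and unwinding the definition of certification (collinearity-freeness rules out the degenerate case $j' = i$) shows that if $\cS \cup \cP$ certifies a pair $(i, j)$, then $x_j$ occurs with coefficient $-1$ in some $e \in \cD$, after the coefficient of $x_i$ is normalized to $+1$. So it suffices to bound by $6\sabs{\cS}$ the number of distinct variables that arise as such a ``$(-1)$-variable'' of a member of $\cD$. Let $\cS'$ be a maximal independent subset of $\cD$; then $\cS'$ is itself a $c$-good independent collection of difference equalities all containing $x_i$ (it is $c$-good because everything it implies is also implied by $\cS \cup \cP$), it implies every member of $\cD$, and $\sabs{\cS'} = \dim \langle \cD \rangle$. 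Relabeling the variables other than $x_i$ so that $x_i$ gets the largest index in use preserves all of this as well as the count we want, so Lemma \ref{lem:baseline-technical} applies to $\cS'$ and bounds that count by $2\sabs{\cS'}$. Hence the lemma reduces to the estimate $\dim \langle \cD \rangle \leq 3\sabs{\cS}$.

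For this, let $U = \langle x_1 + x_2 - x_3 - x_4, \ldots, x_1 + x_2 - x_{2p - 1} - x_{2p} \rangle$ be the span of $\cP$ (the ``star space''). Since $\cS \cup \cP$ is independent, $\langle \cS \cup \cP \rangle = \langle \cS \rangle \oplus U$, so each $e \in \cD$ can be written uniquely as $e = e_{\cS} + u$ with $e_{\cS} \in \langle \cS \rangle$ and $u \in U$. The projection $e \mapsto e_{\cS}$ carries $\langle \cD \rangle$ onto $\langle \cS \rangle$ (as $\cS \subseteq \cD$), so $\dim \langle \cD \rangle = \sabs{\cS} + \dim(\langle \cD \rangle \cap U)$, and it is enough to prove $\dim(\langle \cD \rangle \cap U) \leq 2\sabs{\cS}$.

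This last bound is the heart of the matter and the step I expect to be hardest. The idea is to classify the members $e$ of $\cD$ according to how many of their four variables lie among $x_1, \ldots, x_{2p}$, which is $0$, $1$, $2$, or $3$ (never $4$, since $x_i$ is not among them). The $U$-component $u$ of $e$ is supported on $x_1, \ldots, x_{2p}$ with equal coefficients on $x_{2a - 1}$ and $x_{2a}$ for each $a$, whereas $e$ has only four nonzero coefficients, all $\pm 1$, one of them on $x_i$; when $e$ meets the star pairs in few of its variables these constraints pin $u$ down in terms of $e_{\cS}$ up to bounded freedom, so such $e$ contribute only $O(\sabs{\cS})$ to $\dim(\langle \cD \rangle \cap U)$. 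When $e$ instead meets the star pairs in several variables, the difference equality it encodes has the shape $x_i + x_a = x_b + x_c$ with $x_b, x_c$ among $x_1, \ldots, x_{2p}$; were $\{x_b, x_c\}$ one of the star pairs, this would force $x_i + x_a = x_1 + x_2$, which the hypothesis forbids (together with validity, which promotes the hypothesis ``not for $j > 2p$'' to ``not for any $j$''), and ruling this out pins things down here too. Assembling the cases gives $\dim(\langle \cD \rangle \cap U) \leq 2\sabs{\cS}$. That the no-pairing hypothesis is genuinely needed can be seen from the configuration $\cS = \{x_i - x_1 = x_2 - x_j\}$ with $j > 2p$ (that is, $x_i + x_j = x_1 + x_2$): then $\cP$ generates the further difference equalities $x_i + x_j = x_{2a - 1} + x_{2a}$ for $2 \leq a \leq p$, each containing $x_i$, so that $\dim \langle \cD \rangle$ --- and with it the number of certified pairs $(i, \bullet)$ --- would be of order $p$, with no bound in terms of $\sabs{\cS}$.
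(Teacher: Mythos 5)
Your reduction is sound and genuinely different in flavor from the paper's argument: passing to the set $\cD$ of all difference equalities containing $x_i$ implied by $\cS \cup \cP$, taking a maximal independent subset $\cS'$, relabeling so that $x_i$ carries the top index, and invoking Lemma \ref{lem:baseline-technical} correctly reduces the lemma to the single linear-algebraic estimate $\dim\langle\cD\rangle = \sabs{\cS'} \leq 3\sabs{\cS}$, i.e.\ $\dim(\langle\cD\rangle \cap U) \leq 2\sabs{\cS}$ after the projection step (which is also correct, since $\cS \subseteq \cD$ and $\cS \cup \cP$ is independent). Your closing example showing the necessity of the no-pairing hypothesis is also right.

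However, the central estimate $\dim(\langle\cD\rangle \cap U) \leq 2\sabs{\cS}$ is exactly where the difficulty of the lemma lives, and your sketch does not prove it. Your case analysis classifies a \emph{single} $e \in \cD$ by how many of its variables lie in the star, but the quantity to be bounded is the dimension of the span of the $U$-components of \emph{all} of $\cD$, and the dangerous elements of $\cD$ are those produced by minimal implications combining \emph{several} equations of $\cS$ with equations of $\cP$ (the paper's ``boxes,'' which it shows have exactly three equations from $\cS$ and up to two from $\cP$). For such $e$, the component $e_{\cS}$ is a nontrivial combination of elements of $\cS$, so the assertion that the constraints ``pin $u$ down in terms of $e_{\cS}$ up to bounded freedom, so such $e$ contribute only $O(\sabs{\cS})$'' silently assumes that only boundedly many distinct $e_{\cS}$ occur per equation of $\cS$ --- which is precisely what needs proof, and is the content of Lemma \ref{lem:representatives} (heads of size exactly $3$, fluffiness, disjointness of non-fluffy boxes). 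Even granting that machinery, the count one gets cheaply is $\sabs{\cS'} \leq 3\bigl(\sabs{\cS} + \sabs{\cS}/3\bigr) = 4\sabs{\cS}$, not $3\sabs{\cS}$; reaching your claimed constant would require additionally tracking the linear dependencies between the $U$-directions coming from a box's produced equality and those coming from its three head equations. So as written there is a genuine gap: the key inequality is asserted, not established, and establishing it appears to require essentially the same structural analysis of multi-equation implications that the paper carries out in Lemmas \ref{lem:representatives} and \ref{lem:indiv-eqn} (or a new argument you have not supplied).
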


The intuition behind Lemma \ref{lem:outside-star} is that it describes what happens when we start with a huge star and try to add a small number of equations to it (when we apply Lemma \ref{lem:outside-star}, $\cS$ will be very small). Specifically, it states that if we have a variable $x_i$ and try to introduce it into our picture without making it part of the huge star, then we can only certify very few pairs $(i, \bullet)$. 

In the remainder of this section, we will prove Lemma \ref{lem:good-certify} assuming these lemmas. In Section \ref{sec:observations}, we will present some preliminary observations which will be useful for the proofs of several of these lemmas. We will then prove Lemmas \ref{lem:baseline-technical} and \ref{lem:contained-in-s} in Section \ref{sec:impls-at-i}, Lemma \ref{lem:intersect-r} in Section \ref{sec:intersect-r}, and Lemma \ref{lem:outside-star} in Section \ref{sec:huge-star}. 

\subsection{A baseline bound}

As described in Step \ref{step:baseline}, the first step of the argument is to prove a slightly weaker bound on the number of pairs that $\cC$ certifies. 

\begin{lemma} \label{lem:baseline}
    If the solution space of $\cC$ has dimension $d$, then $\cC$ certifies at most $(k - d)(k - d + 1)$ pairs. 
\end{lemma}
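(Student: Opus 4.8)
The plan is to process the variables $x_1,\dots,x_k$ in order and, at each step $i$, bound the number of pairs $(i,j)$ with $j<i$ that $\cC$ certifies, using Lemma~\ref{lem:baseline-technical}. Write $r=k-d$; equivalently $r=\dim U$, where $U\subseteq\QQ^k$ is the span of the contents of the equations of $\cC$ (its row space), since $\Sol(\cC)=U^\perp$. For each $i$, I would let $U_i\subseteq U$ be the subspace of (contents of) equations implied by $\cC$ that only involve $x_1,\dots,x_i$, and let $\cS_i$ be a maximal independent collection of difference equalities with content in $U_i$ that contain $x_i$ --- so $\operatorname{span}(\cS_i)$ is the span of \emph{all} such difference equalities. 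One checks that $\cS_i$, viewed as a $k$-configuration, is $c$-good (anything it implies is implied by $\cC$), is independent, lies on $x_1,\dots,x_i$, and has all equations containing $x_i$; so Lemma~\ref{lem:baseline-technical} applies and gives that $\cS_i$ certifies at most $2\abs{\cS_i}$ pairs $(i,\bullet)$.

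The first key step is that every pair $(i,j)$ with $j<i$ certified by $\cC$ is also certified by $\cS_i$. Indeed, if $\cC$ certifies $(i,j)$ via some implied difference equality $x_i-x_j=x_{i'}-x_{j'}$, the alternative ``$j'=i$'' from the definition of certification cannot occur, since it would force $\cC$ to imply the three-variable equation $2x_i=x_j+x_{i'}$, contradicting collinearity-freeness; hence $i',j'<i$, so this difference equality has content in $U_i$ and contains $x_i$, hence lies in $\operatorname{span}(\cS_i)$, so $\cS_i$ implies it and certifies $(i,j)$. Summing over $i$, $\cC$ certifies at most $\sum_{i}2\abs{\cS_i}$ pairs.

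The second key step is to show $\sum_i\abs{\cS_i}\le\binom{r+1}{2}$. Here I would use the flag $U_0=0\subseteq U_1\subseteq\cdots\subseteq U_k=U$, noting $\dim(U_i/U_{i-1})\le 1$ since the map $c\mapsto c_i$ on $U_i$ has kernel $U_{i-1}$. Letting $I=\{i:U_i\neq U_{i-1}\}$, this gives $\abs{I}=\dim U=r$ and $\dim U_i=\abs{I\cap[i]}$. Moreover $\cS_i\neq\emptyset$ forces $U_i$ to contain a difference equality with nonzero $x_i$-coefficient, hence $U_i\neq U_{i-1}$, i.e.\ $i\in I$; and $\abs{\cS_i}\le\dim U_i$ always. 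So, writing $I=\{i_1<\cdots<i_r\}$, we get $\sum_i\abs{\cS_i}=\sum_{l}\abs{\cS_{i_l}}\le\sum_{l=1}^r l=\binom{r+1}{2}$, and therefore $\cC$ certifies at most $2\binom{r+1}{2}=r(r+1)=(k-d)(k-d+1)$ pairs.

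I don't expect a serious obstacle; the one insight that makes this work is that the per-step bound $2\abs{\cS_i}$ --- which a priori could be as large as $2\dim U_i$ for up to $r$ indices $i$, suggesting something like $2r^2$ --- actually telescopes to $r(r+1)$ because $\dim U_i$ increases by at most $1$ each time $\cS_i$ becomes nonempty. The parts needing care are purely definitional: verifying that $\cS_i$ meets the hypotheses of Lemma~\ref{lem:baseline-technical}, that $\operatorname{span}(\cS_i)$ really captures every certification by $\cC$ at index $i$ (which uses the maximality of $\cS_i$ together with collinearity-freeness to kill the degenerate ``$j'=i$'' case), and the short flag computation $\dim U_i=\abs{I\cap[i]}$.
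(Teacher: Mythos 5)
Your proof is correct and follows essentially the same route as the paper's: reduce to Lemma~\ref{lem:baseline-technical} applied, for each index $i$, to a maximal independent family $\cS_i$ of implied difference equalities on $x_1,\ldots,x_i$ containing $x_i$, and then bound $\sum_i \abs{\cS_i}$ by $\binom{k-d+1}{2}$. The only difference is bookkeeping: the paper puts $\cC$ in row echelon form and renames variables so that $\abs{\cS_i}\le i-d$ termwise (using the earlier observation that relabeling preserves certification counts), whereas you run the equivalent coordinate-free flag argument on $U_i = U\cap\operatorname{span}(e_1,\ldots,e_i)$.
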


\begin{proof}
    First, imagine that we put $\cC$ into row echelon form. This splits the variables $x_1$, \ldots, $x_k$ into sets of free and non-free variables, such that in order to construct a solution to $\cC$, we can choose the values of the free variables arbitrarily, and these values uniquely determine the values of the non-free variables. In particular, the number of free variables must be exactly $d$; so by renaming variables, we can assume without loss of generality that the free variables are precisely $x_1$, \ldots, $x_d$. 

    Then $\cC$ cannot imply any difference equalities on $x_1$, \ldots, $x_d$, so for each $i \leq d$, the number of pairs $(i, \bullet)$ certified by $\cC$ is simply $0$. 

    Meanwhile, for each $i > d$, we can bound the number of pairs $(i, \bullet)$ certified by $\cC$ as follows: Fix $i$, and let $\cS$ be a maximal linearly independent set of difference equalities on $x_1$, \ldots, $x_i$ involving $x_i$ that are implied by $\cC$. This means every difference equality on $x_1$, \ldots, $x_i$ involving $x_i$ that is implied by $\cC$ is in fact implied by $\cS$ (otherwise we could add it to $\cS$ while preserving independence); so every pair $(i, \bullet)$ certified by $\cC$ is in fact certified by $\cS$. But we must have $\abs{\cS} \leq i - d$ --- this is because $x_1$, \ldots, $x_d$ are all free variables, so $\cC$ cannot imply more than $i - d$ linearly independent equations on $x_1$, \ldots, $x_i$. Then Lemma \ref{lem:baseline-technical} means that $\cS$ (and therefore $\cC$) certifies at most $2(i - d)$ pairs $(i, \bullet)$. 

    Finally, summing over all $i$, the total number of pairs that $\cC$ certifies is at most \[2 \cdot 1 + 2 \cdot 2 + \cdots + 2 \cdot (k - d) = (k - d)(k - d + 1).\qedhere\] 
\end{proof}

Recall that our ultimate goal is to prove that $\cC$ certifies at most $(k^2 - 2k)/4$ pairs (this is the number of pairs certified by a star of size $k$; a star of size $k$ achieves equality in Lemma \ref{lem:baseline} with $d = k/2 - 1$).

To see how Lemma \ref{lem:baseline} compares to the desired bound, note that if the dimension of the solution space of $\cC$ is $d$, then $\cC$ implies some collection of $k - d$ linearly independent equations (which contain at most $k$ variables); since $\cC$ is $c$-good, this means \[k \geq c(k - d) + 1,\] which rearranges to $k - d \leq (k - 1)/c$. If $k$ is small with respect to $2 - c$, then we have $(k - 1)/c < k/2$, so $k - d \leq k/2 - 1$. Then Lemma \ref{lem:baseline} immediately gives the desired bound, and we are done. 

So for the remainder of the argument, we can assume that $k$ is large (more precisely, that $k \geq k_0$ --- we defined parameters so that $2 - c$ is small with respect to $k_0$). Then Lemma \ref{lem:baseline} gives a bound of \[(k - d)(k - d + 1) \leq \floor{\frac{k - 1}{c}}\left(\floor{\frac{k - 1}{c}} + 1\right) \leq \frac{k^2}{c^2}.\] This differs from the desired bound of $(k^2 - 2k)/4$ by a tiny constant factor (which we can make arbitrarily small by making $c$ sufficiently close to $2$). 

\subsection{A stability statement}

The next step of the argument, as described in Step \ref{step:stability}, is to prove a stability-type statement --- that if $\cC$ is even \emph{close} to the weaker bound of $k^2/c^2$ that we get from Lemma \ref{lem:baseline}, then it must be similar to a star of size $k$ (in that it must imply a star of size nearly $k$). 

\begin{lemma} \label{lem:stability}
    Suppose that $\cC$ does not imply any star of size at least $(1 - 17\eps)k$. Then the number of pairs that it certifies is at most \[\left(1 - \frac{\eps^2}{8}\right) \cdot \frac{k^2}{c^2}.\] 
\end{lemma}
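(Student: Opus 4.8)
The plan is to prove the contrapositive: assuming $\cC$ certifies more than $(1-\eps^2/8)k^2/c^2$ pairs, I will exhibit a star of size at least $(1-17\eps)k$ implied by $\cC$, contradicting the hypothesis. The starting point is the proof of Lemma~\ref{lem:baseline}: fix a row-echelon decomposition with free variables $x_1,\ldots,x_d$, and for $i>d$ let $\cS_i$ be a maximal independent family of difference equalities on $x_1,\ldots,x_i$ implied by $\cC$ and containing $x_i$, so that $|\cS_i|\le i-d$, the number $P_i$ of pairs $(i,\bullet)$ certified by $\cC$ equals the number certified by $\cS_i$, and $P_i\le 2|\cS_i|\le 2(i-d)$ by Lemma~\ref{lem:baseline-technical}. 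Write $m=k-d$. From Lemma~\ref{lem:baseline} and $c$-goodness one has $m\le(k-1)/c$ and $m(m+1)\le k^2/c^2$, so the assumption forces $m(m+1)$ to be within a $(1-\eps^2/8)$-factor of both $k^2/c^2$ and $\sum_{i>d}2(i-d)=m(m+1)$. In particular $2m>(1-\eps^2/8)k-1$ (solving the quadratic and using $c\le 2$), and the total ``deficiency'' $\sum_{i>d}\bigl(2(i-d)-P_i\bigr)$ is less than $(\eps^2/8)m(m+1)$.

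Next I would run a pigeonhole on these deficiencies. There are roughly $(\eps/2)m$ indices $i$ with $i-d\ge(1-\eps/2)m$, and their deficiencies sum to less than $(\eps^2/8)m(m+1)$, so (for $m\ge k_0/3$) some $i^\ast$ has $i^\ast-d\ge(1-\eps/2)m$ and deficiency less than $(\eps/2)m$. Then $\cS_{i^\ast}$ certifies $P_{i^\ast}>(2-3\eps/2)m$ pairs $(i^\ast,\bullet)$, so $s:=|\cS_{i^\ast}|>(1-3\eps/4)m$ and $P_{i^\ast}\ge 2(1-\eps)s$; also $s\ge 1/\eps$ since $m\ge k_0/3$. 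Now apply Lemma~\ref{lem:contained-in-s} to $\cS_{i^\ast}$ with this $s$ (it is $c$-good, independent, and all its members contain $x_{i^\ast}$, being implied by $\cC$): either $\cS_{i^\ast}$ implies a star of size at least $2(1-16\eps)s$, or $\cS_{i^\ast}$ certifies at most $(1-2\eps)s^2$ pairs in $[i^\ast-1]^2$. In the first case we are done: $\cC$ implies that star, and $2(1-16\eps)s>2(1-16\eps)(1-3\eps/4)m>(1-17\eps)\cdot 2m>(1-17\eps)k$ after absorbing lower-order terms (using $k\ge k_0$), which is the contradiction we want.

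So the work is in ruling out the second case. For that I would take a maximal family $\cR$ of difference equalities on $x_1,\ldots,x_{i^\ast-1}$ implied by $\cC$ with $\cR\cup\cS_{i^\ast}$ independent. Then $\cR\cup\cS_{i^\ast}$ is $c$-good (anything it implies is implied by $\cC$), $|\cR|\le(i^\ast-d)-s<\eps m$, and by maximality of $\cS_{i^\ast}$ every difference equality containing $x_{i^\ast}$ implied by $\cR\cup\cS_{i^\ast}$ is already implied by $\cS_{i^\ast}$; thus the hypotheses of Lemma~\ref{lem:intersect-r} hold with $i=i^\ast$, and it gives that $\cR\cup\cS_{i^\ast}$ certifies at most $2048|\cR|^2<(\eps/2)m^2$ pairs beyond those certified by $\cS_{i^\ast}$. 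Since every pair in $[i^\ast-1]^2$ certified by $\cC$ is certified by $\cR\cup\cS_{i^\ast}$ (again by maximality of $\cR$), the number of such pairs is at most $(1-2\eps)s^2+(\eps/2)m^2<(1-3\eps/2)m^2$. On the other hand that same quantity equals $\sum_{d<i<i^\ast}P_i\ge(i^\ast-1-d)(i^\ast-d)-(\eps^2/8)m(m+1)>(1-\eps-\eps^2/8)m^2-2m$, which for $m\ge k_0/3$ exceeds $(1-3\eps/2)m^2$ --- a contradiction. Hence the star alternative holds, completing the argument.

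The main obstacle is the constant bookkeeping: each of Lemmas~\ref{lem:baseline-technical}, \ref{lem:contained-in-s}, and \ref{lem:intersect-r} costs an $O(\eps)$ or $O(\eps^2)$ slack, and one must choose the two pigeonhole thresholds so that (a) the resulting star genuinely has size at least $(1-17\eps)k$ rather than, say, $(1-18\eps)k$, (b) $2048|\cR|^2$ is truly negligible against $m^2$, and (c) the two estimates on the number of pairs in $[i^\ast-1]^2$ actually contradict each other once $m$ is large; these all go through for $\eps\le 1/4096$ and $k_0\ge 32/\eps^2$. A secondary point is the linear-algebra claim that $2m>(1-\eps^2/8)k-1$, so that a star on $\approx 2m$ variables can still reach size $(1-17\eps)k$; this is exactly where the hypothesis that the certified count is quadratic in $k$ with the correct leading constant (equivalently, that $c$ is close to $2$) is used.
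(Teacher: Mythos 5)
Your proposal is correct (the constant bookkeeping you flag does go through with $\eps \le 1/4096$, $k_0 \ge 32/\eps^2$, and $2-c$ small), and it is essentially the paper's own argument run in contrapositive form: the same row-echelon/free-variable decomposition, the same maximal families $\cS$ and $\cR$, and the same use of Lemmas \ref{lem:baseline-technical}, \ref{lem:contained-in-s}, and \ref{lem:intersect-r}. The paper's direct version splits on whether a near-saturated index of height at least $(1-\eps)k/2$ exists --- your pigeonhole step is the contrapositive of its Case 2 and your counting contradiction at $i^\ast$ is its Case 1 read backwards --- so the two proofs differ only in logical organization.
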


\begin{proof}
    Let $d$ be the dimension of the solution space of $\cC$ (which satisfies $d \leq (k - 1)/c$). As in the proof of Lemma \ref{lem:baseline}, imagine that we put $\cC$ into row echelon form and rename variables so that $x_1$, \ldots, $x_d$ are free (i.e., in order to construct a solution to $\cC$, we can choose the values of $x_1$, \ldots, $x_d$ arbitrarily, and these determine the values of the remaining variables). Then $\cC$ cannot certify any pairs $(i, \bullet)$ with $i \leq d$, and it certifies at most $2(i - d)$ pairs $(i, \bullet)$ for each $i > d$ (by the same argument as in the proof of Lemma \ref{lem:baseline}). We say an index $i > d$ is \emph{near-saturated} if $\cC$ certifies at least $2(1 - \eps)(i - d)$ pairs $(i, \bullet)$. 

    \case{1}{The largest near-saturated index $i$ satisfies $i - d \geq (1 - \eps)k/2$} Here, the main idea is that we will use Lemmas \ref{lem:contained-in-s} and \ref{lem:intersect-r} to save a $(1 - \eps)$-factor on the number of pairs $(i', j') \in [i]^2$ that $\cC$ certifies, compared to the bound of $2 \cdot 1 + \cdots + 2 \cdot (i - d) = (i - d)(i - d + 1)$ from Lemma \ref{lem:baseline}. Meanwhile, we will use the fact that no indices past $i$ are near-saturated to individually save a $(1 - \eps)$-factor on the number of pairs $(i', \bullet)$ that $\cC$ certifies for each $i' > i$. Combining these will allow us to save a $(1 - \eps)$-factor compared to the bound from Lemma \ref{lem:baseline}. 

    First let $\cS$ be a maximal collection of independent difference equalities on $x_1$, \ldots, $x_i$ involving $x_i$ that are implied by $\cC$. Then let $\cR$ be a maximal collection of difference equalities on $x_1$, \ldots, $x_{i - 1}$ implied by $\cC$ such that $\cR \cup \cS$ is independent.
    \begin{itemize}
        \item The maximality of $\cS$ means that every difference equality on $x_1$, \ldots, $x_i$ involving $x_i$ that is implied by $\cC$ is in fact implied by $\cS$ (in particular, all pairs $(i, \bullet)$ certified by $\cC$ are actually certified by $\cS$).
        \item The maximality of $\cR$ means that every difference equality on $x_1$, \ldots, $x_{i - 1}$ that is implied by $\cC$ is implied by $\cR \cup \cS$ (in particular, all pairs $(i', j') \in [i - 1]^2$ certified by $\cC$ are certified by $\cR \cup \cS$). 
    \end{itemize}
    
    First, since $x_1$, \ldots, $x_d$ are free variables, $\cC$ can imply at most $i - d$ independent equations on $x_1$, \ldots, $x_i$. So letting $s = i - d$, we have $\abs{\cR} + \abs{\cS} \leq s$. Furthermore, since $i$ is near-saturated, $\cS$ certifies at least $2(1 - \eps)s$ pairs $(i, \bullet)$, so Lemma \ref{lem:baseline-technical} means that $\abs{\cS} \geq (1 - \eps)s$, and therefore $\abs{\cR} \leq \eps s$. 

    Then $\cS$ satisfies the conditions for Lemma \ref{lem:contained-in-s} --- the assumption that $k \geq k_0$ (where $k_0$ is large with respect to $\eps$) means that $s \geq 1/\eps$, and the assumption that $\cC$ does not imply a star of size at least $(1 - 17\eps)k$ means that $\cS$ does not imply a star of size at least $2(1 - 16\eps)s$, since \[2(1 - 16\eps)s \geq 2(1 - 16\eps) \cdot \frac{(1 - \eps)k}{2} \geq (1 - 17\eps)k.\] So Lemma \ref{lem:contained-in-s} means that $\cS$ certifies at most $(1 - 2\eps)s^2$ pairs $(i', j') \in [i - 1]^2$. Meanwhile, $\cR$ and $\cS$ satisfy the conditions for Lemma \ref{lem:intersect-r}, which gives that $\cR \cup \cS$ certifies at most \[2048\abs{\cR}^2 \leq 2048\eps^2s^2\] additional pairs $(i', j') \in [i - 1]^2$ (i.e., ones not already certified by $\cS$). Finally, $\cS$ certifies at most $2s$ pairs $(i, \bullet)$. So in total, the number of pairs $(i', j') \in [i]^2$ certified by $\cR \cup \cS$ (and therefore by $\cC$) is at most \[(1 - 2\eps)s^2 + 2048\eps^2s^2 + 2s \leq (1 - \eps)s(s + 1)\] (this inequality again uses the fact that $\eps$ is small and $s$ is large with respect to $\eps$). Recall that $s = i - d$; so we have indeed saved a $(1 - \eps)$-factor on the number of pairs $(i', j') \in [i]^2$ that $\cC$ certifies compared to the bound of $(i - d)(i - d + 1)$ from Lemma \ref{lem:baseline}. 

    Finally, for each $i' > d$, the fact that $i'$ is not near-saturated means that $\cC$ certifies at most $2(1 - \eps)(i' - d)$ pairs $(i', \bullet)$. So in total, the number of pairs that $\cC$ certifies is at most \[(1 - \eps)(i - d)(i - d + 1) + 2(1 - \eps)((i - d + 1) + \cdots + (k - d)) = (1 - \eps)(k - d)(k - d + 1).\] Since $k - d \leq (k - 1)/c$, this is at most $(1 - \eps)k^2/c^2$, which is smaller than the desired bound.
    
    \case{2}{No indices $i$ with $i - d \geq (1 - \eps)k/2$ are near-saturated} Here, the idea is that we can save a substantial amount on the number of pairs $(i, \bullet)$ that $\cC$ certifies for every index $i$ with $i - d \geq (1 - \eps)k/2$, compared to the bound of $2(i - d)$ from Lemma \ref{lem:baseline}. And there are reasonably many such indices, so this will give a substantial saving on the total number of pairs that $\cC$ certifies. 

    To formalize this, imagine that we compare the two sums 
    \begin{equation*}
        (\text{I}) = \sum_{i > d} \#(\text{pairs $(i, \bullet)$ certified by $\cC$}) \quad \text{and} \quad (\text{II}) = 2 \cdot 1 + 2 \cdot 2 + \cdots + 2 \cdot \floor{\frac{k - 1}{c}}.
    \end{equation*}
    The first is the number of pairs actually certified by $\cC$, and the second is the bound we get from Lemma \ref{lem:baseline} after using the fact that $k - d \leq (k - 1)/c$. (In particular, we have $(\text{II}) \leq k^2/c^2$.)
    
    We will compare $(\text{I})$ and $(\text{II})$ termwise. (It is possible that $(\text{II})$ has more terms than $(\text{I})$; in that case, we imagine appending $0$'s to the end of $(\text{I})$.) Every term in $(\text{I})$ is at most the corresponding term in $(\text{II})$. For all $(1 - \eps)k/2 \leq j \leq (k - 1)/c$, the $j$th term in $(\text{II})$ is $2j$, while the $j$th term in $(\text{I})$ is at most \[2(1 - \eps)j \leq 2j - \frac{\eps k}{2}.\] (If $j + d \leq k$, then this follows from the assumption that $i = j + d$ is not near-saturated; otherwise, the $j$th term in $(\text{I})$ is $0$.) The number of such $j$ is at least \[\frac{k - 1}{c} - \frac{(1 - \eps)k}{2} - 1 \geq \frac{\eps k}{4}\] (since $c \leq 2$ and we assumed $k \geq k_0$, i.e., that $k$ is large with respect to $\eps$). So we get that \[(\text{I}) \leq (\text{II}) - \frac{\eps k}{2} \cdot \frac{\eps k}{4} \leq \frac{k^2}{c^2} - \frac{\eps^2 k^2}{8} \leq \left(1 - \frac{\eps^2}{8}\right)\frac{k^2}{c^2}.\qedhere\] 
\end{proof}

In order to prove Lemma \ref{lem:good-certify}, we want to show that $\cC$ certifies at most $(k^2 - 2k)/4$ pairs. Since we chose $2 - c$ to be small (and assumed $k$ is large) with respect to $\eps$, we have \[\left(1 - \frac{\eps^2}{8}\right) \cdot \frac{k^2}{c^2} \leq \frac{k^2 - 2k}{4}.\] So if $\cC$ does not imply a star of size at least $(1 - 17\eps)k$, then we are done; it remains to consider the case where $\cC$ does implies a star of this size. 

\subsection{The huge-star case}

Finally, we handle the case where $\cC$ implies a huge star, corresponding to Step \ref{step:huge-star} of the outline. The proof in this case has two components. The first is that $\cC$ certifies very few pairs $(i, \bullet)$ for variables $x_i$ outside the star; this is captured by Lemma \ref{lem:outside-star}. The second is that $\cC$ cannot certify any extra pairs \emph{within} the star; this is captured by the following lemma. 

\begin{lemma} \label{lem:within-star}
    Suppose that $\cC$ implies a star $\{x_1 + x_2 = x_3 + x_4 = \cdots = x_{2p - 1} + x_{2p}\}$. Then $\cC$ cannot imply any difference equality on $x_1$, \ldots, $x_{2p}$ other than the ones implied by this star. 
\end{lemma}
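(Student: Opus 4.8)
The plan is to argue by contradiction. Suppose that, besides the star $\cP = \{x_1 + x_2 = x_3 + x_4 = \cdots = x_{2p - 1} + x_{2p}\}$, the configuration $\cC$ implies some difference equality $E$ on $x_1, \ldots, x_{2p}$ that is \emph{not} implied by $\cP$. I will extract from this a small sub-configuration, implied by $\cC$, having exactly as many independent equations as half its variable count; since $2 - c$ is tiny, such a configuration is $c$-heavy, contradicting the hypothesis that $\cC$ is $c$-good.

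The first step is to localize $E$ inside the star. A difference equality involves at most four indices, so the variables of $E$ lie in at most four of the pairs $\{x_{2i - 1}, x_{2i}\}$; let $P \subseteq [p]$ be the set of those pair-indices, so $m := \abs{P} \le 4$, and let $\cP_P = \{x_{2i - 1} + x_{2i} = x_{2j - 1} + x_{2j} : i, j \in P\}$ be the corresponding sub-star. Then $\cP_P$ consists of $m - 1$ independent difference equalities; for $m \ge 2$ its variable set is exactly $\{x_{2i - 1}, x_{2i} : i \in P\}$, of size $2m$, and by the choice of $P$ the equation $E$ involves only variables from this set. The key observation is that $\cP_P \cup \{E\}$ is independent: every equation of $\cP_P$ is implied by $\cP$, so anything implied by $\cP_P$ is implied by $\cP$; as $E$ is not implied by $\cP$, it is not implied by $\cP_P$, and adjoining $E$ to the independent set $\cP_P$ keeps it independent. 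Hence $\cP_P \cup \{E\}$ is a collection of $m$ independent difference equalities containing exactly $2m$ variables, and it is implied by $\cC$ (which implies $\cP$, hence $\cP_P$, and implies $E$ by assumption).

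Finally I would invoke $c$-lightness. Because $2 - c \le 2^{-29}$ and $m \le 4$, we have $m(2 - c) < 1$, i.e.\ $2m < cm + 1$; so $\cC$ implies $m$ independent equations containing fewer than $cm + 1$ variables, making $\cC$ $c$-heavy and contradicting the hypothesis. I expect no substantive obstacle here; the only point needing care is the degenerate case $\abs{P} \le 1$, where $\cP_P$ is empty, but then $E$ is itself a single nontrivial difference equality supported on (exactly) two variables, which is $1$ independent equation on $2$ variables, still violating $c$-lightness since $c > 1$ — so the counting argument is uniform. (Alternatively, when $\abs{P} = 1$ the equation $E$ forces two variables within a single pair to be equal, contradicting the validity of $\cC$ directly.)
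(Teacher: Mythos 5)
Your proposal is correct and follows essentially the same route as the paper: restrict attention to the (at most four) star-pairs touched by the extra equation, take the $m-1$ independent sub-star equations on those $2m$ variables, adjoin the extra equation (independent since it is not implied by the full star), and contradict $c$-lightness via $2m < cm + 1$. The only cosmetic difference is that the paper phrases the localization as "the variables of $(*)$ together with their opposites," and it notes that $c$-goodness forces the extra equation to have four distinct variables, so the degenerate case $\abs{P} \le 1$ you discuss cannot actually occur.
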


\begin{proof}
    We call the variables $x_{2i - 1}$ and $x_{2i}$ \emph{opposites} in our star (for $1 \leq i \leq p$). Assume for contradiction that $\cC$ does imply a difference equality $(*)$ on $x_1$, \ldots, $x_{2p}$ which is not implied by our star, and consider the set of variables consisting of the four variables in $(*)$ and their opposites. This set contains $2t$ variables for some $2 \leq t \leq 4$ (where $t$ depends on how many pairs of opposites appear in $(*)$). 

    But the star implies $t - 1$ independent equations on these $2t$ variables, namely the equations stating that each pair of opposites has equal sum. (For example, if $(*)$ is the equation $x_1 - x_5 = x_9 - x_7$, then we would take the $8$ variables $x_1$, $x_2$, $x_5$, $x_6$, $x_7$, $x_8$, $x_9$, $x_{10}$ and the $3$ equations $x_1 + x_2 = x_5 + x_6$, $x_1 + x_2 = x_7 + x_8$, and $x_1 + x_2 = x_9 + x_{10}$.) And $(*)$ provides another independent equation on these variables, so this means $\cC$ implies $t$ independent equations on $2t$ variables. Since $t \leq 4$ and $c$ is close to $2$, we have $2t < ct + 1$, so this contradicts the fact that $\cC$ is $c$-good. 
\end{proof}

By combining Lemmas \ref{lem:outside-star} and \ref{lem:within-star}, we obtain the following bound for the huge-star case. 

\begin{lemma} \label{lem:huge-star}
    Suppose that the largest star implied by $\cC$ has size $2p$ for some $(1 - 17\eps)k \leq 2p \leq k$. Then $\cC$ certifies at most $p(k/2 - 1)$ pairs. 
\end{lemma}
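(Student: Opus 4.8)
The plan is to combine Lemmas \ref{lem:within-star} and \ref{lem:outside-star} with a short counting estimate. Since $\cC$ is valid, permuting the indices of its variables changes neither its $c$-goodness, the size of the largest star it implies, nor the number of pairs it certifies; so I would first rename variables so that the largest star implied by $\cC$ is $\cP = \{x_1 + x_2 = x_3 + x_4 = \cdots = x_{2p-1} + x_{2p}\}$. Now split the pairs certified by $\cC$ according to whether the larger index is $\le 2p$ or $> 2p$. If $\cC$ certifies a pair $(i,j)$ with $i \le 2p$, then it implies $x_i - x_j = x_{i'} - x_{j'}$ where the definition of ``certifies'' forces $i',j' \le 2p$ as well; this is a difference equality on $x_1, \dots, x_{2p}$, so by Lemma \ref{lem:within-star} it is implied by $\cP$, and hence $\cP$ already certifies $(i,j)$. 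Thus the pairs $(i,j)$ with $i \le 2p$ certified by $\cC$ are exactly those certified by $\cP$, of which there are $p(p-1)$ by the computation in Example \ref{ex:star}.

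It remains to bound the number of pairs $(i,\bullet)$ certified by $\cC$ with $i > 2p$. For each such $i$, let $\cS_i$ be a maximal collection of difference equalities containing $x_i$ such that $\cP \cup \cS_i$ is independent and implied by $\cC$. By maximality, every difference equality containing $x_i$ that is implied by $\cC$ is implied by $\cP \cup \cS_i$, so every pair $(i,\bullet)$ certified by $\cC$ is certified by $\cP \cup \cS_i$. I would then check that $\cP \cup \cS_i$ satisfies the hypotheses of Lemma \ref{lem:outside-star}: it is independent by construction; it is $c$-good because validity, collinearity-freeness, and $c$-lightness all descend from $\cC$ to any collection of equations implied by $\cC$; and it cannot imply $x_1 + x_2 = x_i + x_j$ for any $j > 2p$, since if $j \ne i$ this would give $\cC$ a star of size $2p+2$, contradicting the maximality of $\cP$, and if $j = i$ it would be a three-variable equation, contradicting collinearity-freeness. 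Lemma \ref{lem:outside-star} then bounds the number of pairs $(i,\bullet)$ certified by $\cC$ by $6\abs{\cS_i}$.

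Finally I would estimate $\sum_{i > 2p}\abs{\cS_i}$. Letting $d$ be the dimension of the solution space of $\cC$, the configuration $\cC$ implies $k-d$ independent equations (on at most $k$ variables), so $c$-goodness forces $k \ge c(k-d)+1$, i.e.\ $k - d \le \floor{(k-1)/c}$; since $\cP \cup \cS_i$ is independent and implied by $\cC$ with $\abs{\cP} = p-1$, we get $\abs{\cS_i} \le \floor{(k-1)/c} - p + 1$. There are at most $k - 2p \le 17\eps k$ indices $i > 2p$, and with $2-c \le \eps^2/32$, $\eps \le 1/4096$, and $k \ge k_0$ a direct computation gives $6\sum_{i > 2p}\abs{\cS_i} \le p(k/2 - p)$ (this is vacuous when $2p = k$, since then there are no indices $i > 2p$ and the right-hand side is $0$). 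Adding the two contributions yields that $\cC$ certifies at most $p(p-1) + p(k/2 - p) = p(k/2 - 1)$ pairs.

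The step I expect to require the most care is verifying the hypotheses of Lemma \ref{lem:outside-star} --- in particular, pinning down exactly why the maximality of the star implied by $\cC$ forbids the equation $x_1 + x_2 = x_i + x_j$, and not overlooking the degenerate case $j = i$. The remaining work is either bookkeeping (the index analysis in the first paragraph) or a routine estimate with generous constants (the last paragraph).
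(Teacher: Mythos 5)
Your proposal is correct and follows essentially the same route as the paper: Lemma \ref{lem:within-star} pins the pairs inside the star at exactly $p^2 - p$, and for each $i > 2p$ a maximal independent extension $\cS_i$ of $\cP$ together with Lemma \ref{lem:outside-star} bounds the pairs $(i, \bullet)$, with the size bound on $\cS_i$ and the final summation matching the paper's estimate (the paper bounds each index by $p/2$ rather than summing, and invokes $c$-lightness of $\cS_i \cup \cP$ directly instead of going through the dimension $d$, but these are equivalent). Your explicit treatment of the degenerate case $j = i$ via collinearity-freeness is a detail the paper leaves implicit.
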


\begin{proof}
    By renaming variables, we can assume that the largest star implied by $\cC$ is \[\{x_1 + x_2 = x_3 + x_4 = \cdots = x_{2p - 1} + x_{2p}\}.\] Then by Lemma \ref{lem:within-star}, the number of pairs $(i, j) \in [2p]^2$ certified by $\cC$ is exactly the number of pairs certified by this star, which is $p^2 - p$ (as computed in Example \ref{ex:star}). 
    
    Meanwhile, for each $i > 2p$, we bound the number of pairs $(i, \bullet)$ that $\cC$ certifies using Lemma \ref{lem:outside-star}. Let \[\cP = \{x_1 + x_2 = x_3 + x_4, \, x_1 + x_2 = x_5 + x_6, \, \ldots, x_1 + x_2 = x_{2p - 1} + x_{2p}\}\] be as in the statement of Lemma \ref{lem:outside-star}, and let $\cS$ be a maximal collection of difference equalities involving $x_i$ implied by $\cC$ such that $\cS \cup \cP$ is linearly independent. The maximality of $\cS$ means that every pair $(i, \bullet)$ certified by $\cC$ is certified by $\cS \cup \cP$. Meanwhile, since $\cC$ is $c$-good and $\cS \cup \cP$ is linearly independent and contains at most $k$ variables, we have \[\abs{\cS} + \abs{\cP} \leq \frac{k - 1}{c},\] and since $\abs{\cP} = p - 1 \geq (1 - 17\eps)k/2 - 1$, this means \[\abs{\cS} \leq \frac{k - 1}{c} - \frac{(1 - 17\eps)k}{2} + 1 \leq 10\eps k \leq 24\eps p\] (since $2 - c$ is small with respect to $\eps$, and we assumed $k$ is large with respect to $\eps$). 
    
    Then Lemma \ref{lem:outside-star} gives that $\cS \cup \cP$ (and therefore $\cC$) certifies at most $144 \eps p$ pairs $(i, \bullet)$. (The fact that our star is the largest one implied by $\cC$ means that the condition for Lemma \ref{lem:outside-star} --- that $x_i$ cannot be added to the star --- is satisfied.) 
    
    Finally, there are $k - 2p$ indices $i > 2p$, and $\cC$ certifies at most $144\eps p \leq p/2$ pairs $(i, \bullet)$ for each such $i$. So the total number of pairs it certifies is at most \[p^2 - p + (k - 2p) \cdot \frac{p}{2} = p\left(\frac{k}{2} - 1\right).\qedhere\] 
\end{proof}

Lemma \ref{lem:huge-star} completes the proof of Lemma \ref{lem:good-certify} for the case where $\cC$ implies a huge star (we always have $p \leq k/2$, so $p(k/2 - 1) \leq (k^2 - 2k)/4$); and as discussed earlier, Lemma \ref{lem:stability} handles the case where $\cC$ does not imply a huge star. So this completes the proof of Lemma \ref{lem:good-certify} (modulo the technical lemmas stated in Subsection \ref{subsec:technical}, which we will prove in the following sections). 

\section{Some useful observations} \label{sec:observations}

It now remains to prove the technical lemmas stated in Subsection \ref{subsec:technical}, which we will do in Sections \ref{sec:impls-at-i}--\ref{sec:huge-star}. In this section, we present a few preliminary observations that will be useful for several of those proofs. (Lemma \ref{lem:2-good-min-impl} and Claim \ref{claim:2-eqns-5-vars} are taken from \cite{Das23}, but we restate their proofs here to keep this paper self-contained, and because the ideas used to prove Lemma \ref{lem:2-good-min-impl} are also useful in other arguments.)

We say a \emph{minimal implication} is a collection of independent difference equalities $\{(*_1), \ldots, (*_t)\}$ which minimally implies a difference equality $(*)$, and we say $(*)$ is \emph{produced} by this minimal implication. In all our technical lemmas, we start with a collection of difference equalities with certain properties, and we want to understand the pairs that this collection certifies; this means we want to understand what other difference equalities it implies. We will typically do so by looking at minimal implications, and we will often use the results of this section to make claims about the structure of those minimal implications. 

The first observation is that if we have a \emph{constant-sized} collection of difference equalities which we know is $c$-good, then we can actually say it is $2$-good. 

\begin{claim} \label{claim:c-to-2}
    If $\cT$ is a set of difference equalities with $\abs{\cT} < 1/(2 - c)$ and $\cT$ is $c$-good, then $\cT$ is $2$-good. 
\end{claim}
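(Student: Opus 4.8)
The goal is to show that if $\cT$ is $c$-good with $\abs{\cT} < 1/(2-c)$, then $\cT$ is $2$-good. Since $\cT$ is $c$-good it is valid and collinearity-free, and both of these properties are independent of the parameter $c$; so to conclude that $\cT$ is $2$-good it suffices to prove that $\cT$ is $2$-light. Suppose not: then $\cT$ implies some collection of $t$ independent equations containing fewer than $2t+1$ variables, i.e.\ containing at most $2t$ variables. We want to derive a contradiction with the assumption that $\cT$ is $c$-light (which says that every collection of $t'$ independent equations implied by $\cT$ contains at least $ct'+1$ variables).

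**Key step.** The point is that $t$ cannot be too large: since these $t$ equations are implied by $\cT$ and are independent, and $\cT$ itself is a set of difference equalities, we have $t \le \abs{\cT} < 1/(2-c)$. Now we want to quantify the failure of $2$-lightness into a failure of $c$-lightness. We have $t$ independent equations on at most $2t$ variables. For $\cT$ to be $c$-light, we would need $2t \ge ct + 1$, i.e.\ $(2-c)t \ge 1$, i.e.\ $t \ge 1/(2-c)$. But $t \le \abs{\cT} < 1/(2-c)$, a contradiction. Hence no such collection of equations exists, so $\cT$ is $2$-light, and therefore $2$-good.

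**Where the subtlety lies.** The only thing to be careful about is the bound $t \le \abs{\cT}$: a priori the $t$ equations witnessing the failure of $2$-lightness are merely \emph{implied} by $\cT$, not members of $\cT$. But any collection of independent equations implied by $\cT$ has size at most the dimension of the span of the contents of $\cT$, which is at most $\abs{\cT}$; so indeed $t \le \abs{\cT}$. (Equivalently: $\cT$ implies at most $\abs{\cT}$ independent equations.) This is the one place where a one-line justification is needed; everything else is the arithmetic above. I expect no real obstacle here — the statement is essentially just unwinding the definitions of $c$-light and $2$-light and observing that the single relevant count is bounded by $\abs{\cT}$. It might be cleanest to phrase the argument contrapositively as I did, starting from a hypothetical witness to $2$-heaviness and showing it is also a witness to $c$-heaviness (since $2t \ge ct+1$ fails forces $t < 1/(2-c) > \abs{\cT}$, impossible), rather than chaining inequalities forward.
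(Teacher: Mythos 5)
Your proof is correct and matches the paper's argument in essence: both rest on the observation that any $t$ independent equations implied by $\cT$ satisfy $t \leq \abs{\cT} < 1/(2 - c)$, so that the $c$-light requirement of at least $ct + 1$ variables already forces at least $2t + 1$ variables (you phrase this contrapositively via a hypothetical witness to $2$-heaviness, the paper argues directly, but the content is identical). The one subtlety you flag, $t \leq \abs{\cT}$, is justified exactly as in the paper.
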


\begin{proof}
    By definition, $\cT$ is $c$-light if and only if every $t$ linearly independent equations implied by $\cT$ together contain at least $ct + 1$ variables. But if $\cT$ implies $t$ linearly independent equations, we must have $t \leq \abs{\cT}$. And for all such values of $t$, we have $ct + 1 > 2t$, which means that requiring a collection of $t$ equations to contain at least $ct + 1$ variables is equivalent to requiring it to contain at least $2t + 1$ variables. 

    So if $\cT$ is $c$-light then it is also $2$-light; and the remaining conditions in the definition of being $c$-good (being valid and collinearity-free) do not depend on $c$. 
\end{proof}

The reason Claim \ref{claim:c-to-2} is useful because it turns out that we can say a lot about the structure of $2$-good minimal implications, as seen in the following lemma. 

\begin{lemma} \label{lem:2-good-min-impl}
    Let $\cT = \{(*_1), \ldots, (*_t)\}$ be a $2$-good minimal implication producing a difference equality $(*)$. Then the following statements hold. 
    \begin{enumerate}[(i)]
        \item \label{item:variable-counts} Either $\cT$ contains $2t + 2$ variables and each appears twice among $(*_1)$, \ldots, $(*_t)$, $(*)$; or $\cT$ contains $2t + 1$ variables, one appears four times, and the others each appear twice. 
        \item \label{item:signs-pm1} We have ${*} = \pm {*_1} \pm {*_2} \pm \cdots \pm {*_t}$ (for some choice of signs).
        \item \label{item:unique} $\cT$ cannot produce any difference equality other than $(*)$. 
    \end{enumerate}
\end{lemma}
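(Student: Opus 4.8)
The plan is to fix, once and for all, an expansion $* = c_1{*_1} + \cdots + c_t{*_t}$ with every $c_j \neq 0$ (guaranteed by minimality), to set $*_0 := *$, and to exploit throughout that a $2$-good collection is collinearity-free --- so every difference equality it implies, in particular each $*_j$ and $*$ itself, has a content using exactly four distinct variables, each with coefficient $\pm 1$. Two bookkeeping facts come first. \textbf{(a)} Every variable occurring among $*_0, *_1, \ldots, *_t$ occurs at least twice: if $x_v$ lay in exactly one $*_j$ with $j \geq 1$ and not in $*$, the $x_v$-coordinate of $* = \sum_{i \geq 1} c_i{*_i}$ would force $c_j = 0$; and $x_v$ cannot lie in $*$ alone, for the same coordinate reading. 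In particular the variables occurring in $\cT$ are exactly all the occurring variables; write $n_0$ for their number. \textbf{(b)} Counting the $4(t+1)$ variable-occurrences among $*_0, \ldots, *_t$ against (a) gives $n_0 \leq 2t+2$, while $2$-lightness applied to the $t$ independent equations $*_1, \ldots, *_t$ gives $n_0 \geq 2t+1$; so $n_0 \in \{2t+1, 2t+2\}$, and the `excess' $4(t+1) - 2n_0 \in \{0, 2\}$ is spread over the variables.

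The core is an auxiliary multigraph $G$ whose vertices are $*_0, \ldots, *_t$ and whose edges are the multiplicity-two variables, each joining the two equations containing it. I would prove $G$ is connected. If $C$ is a component not containing $*_0$, then every variable occurring in the equations of $C$ is either internal to $C$ or has multiplicity $\geq 3$; reading $\sum_j c_j{*_j} = 0$ coordinatewise shows $\sum_{j \in C} c_j{*_j}$ is supported only on the multiplicity-$\geq 3$ variables, and since contents have coefficient-sum zero, this combination is either $0$ --- contradicting independence of $*_1, \ldots, *_t$ --- or (when there are two multiplicity-three variables $x_u, x_w$) a nonzero multiple of the content $x_u - x_w$, which would make $\cT$ imply $x_u = x_w$, contradicting validity. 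Once $G$ is connected, the coordinate of each edge-variable in $\sum_j c_j{*_j} = 0$ forces $|c_i| = |c_j|$ at its endpoints, so all $|c_j|$ are equal; since $c_0 = -1$ this proves \emph{(ii)}, that $* = \pm{*_1} \pm \cdots \pm {*_t}$. This also settles \emph{(i)}: when $n_0 = 2t+1$ the excess $2$ is split either as one multiplicity-four variable or two multiplicity-three variables, and in the latter case the coordinate of a multiplicity-three variable reads $\pm 1 \pm 1 \pm 1 = 0$, which is impossible.

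For \emph{(iii)}, suppose $\cT$ also minimally implies a difference equality $**$; by \emph{(ii)} applied to both, $* - {**} = \sum_j (c_j - c_j'){*_j}$ with all $c_j, c_j' \in \{\pm 1\}$, so $* - {**} = 2u$ for the integer-coefficient content $u = \sum_{j \in J} \epsilon_j{*_j}$, where $J = \{j : c_j \neq c_j'\}$ and $\epsilon_j \in \{\pm 1\}$ (if $J = \emptyset$ we are done, and otherwise $u \neq 0$ by independence). A parity check shows $*$ and $**$ use the same four variables, with equal coefficients except on the variables of $u$, where they are opposite; balancing the two $+$'s and two $-$'s of a difference equality then leaves only the possibilities that $u$ uses zero, two, or four of those variables, and the zero and four cases give $** = *$ as an equation. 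Otherwise $u$ has the form $x_a - x_c$ with $a \neq c$, so $\{*_j\}_{j \in J} \subseteq \cT$ implies $x_a = x_c$, again contradicting validity; hence $** = *$.

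The step I expect to be the main obstacle is the connectivity of $G$ --- specifically, ruling out a component $C \not\ni *_0$ that is tied to the rest only through high-multiplicity variables. The clean resolution is the coefficient-sum-zero identity, which forces $\sum_{j \in C} c_j{*_j}$ to be a multiple of a content $x_u - x_w$ (never of a single variable), so that it either vanishes (contradicting independence) or exhibits an invalidity. Once this is in hand, the multiplicity bookkeeping of (a)--(b) and the small parity contradictions driving \emph{(i)} and \emph{(iii)} are routine.
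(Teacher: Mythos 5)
Your proposal is correct and follows essentially the same route as the paper: the same double-counting of variable occurrences against $2$-lightness, the same auxiliary graph on $(*_1),\ldots,(*_t),(*)$ with multiplicity-two variables as edges (with the disconnected-component combination forced, by zero coefficient sum, to be either zero or a multiple of $x_u - x_w$, contradicting independence or validity), and the same parity elimination of the two-multiplicity-three case. Your derivation of \emph{(iii)} by subtracting the two $\pm 1$-expansions is only a cosmetic variant of the paper's ``same four variables, then validity'' argument.
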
 

\begin{proof}
    Since $\cT$ minimally implies $(*)$, we can write 
    \begin{equation}
        {*} = c_1{*_1} + \cdots + c_t{*_t} \label{eqn:min-impl}
    \end{equation}
    for some nonzero $c_1, \ldots, c_t \in \QQ$. This means each variable which appears in $\cT$ must appear in at least two of $(*_1)$, \ldots, $(*_t)$, $(*)$. Furthermore, each equation has four variables, so the total number of slots for variables to appear in (i.e., the sum of the number of times each variable appears) is exactly $4(t + 1)$. But since $\cT$ is $2$-good, the number of variables appearing in $\cT$ must be at least $2t + 1$. There are only three ways in which this can occur: 
    \begin{enumerate}[(a)]
        \item There are $2t + 2$ variables, and each appears in exactly two equations. 
        \item There are $2t + 1$ variables, one variable appears in four equations, and the others appear in two. 
        \item \label{item:3-3} There are $2t + 1$ variables, two variables appear in three equations, and the others appear in two. 
    \end{enumerate}
    In other words, we have nearly proved \ref{item:variable-counts}, except that we have an extra case \ref{item:3-3}. We will use this weaker version of \ref{item:variable-counts} to deduce \ref{item:signs-pm1} (i.e., that $c_1, \ldots, c_t \in \{\pm 1\}$), and then use \ref{item:signs-pm1} to eliminate this extra case and complete the proof of \ref{item:variable-counts} (as well as to prove \ref{item:unique}). 

    We call a variable \emph{ordinary} if it appears in exactly two of $(*_1)$, \ldots, $(*_t)$, $(*)$; so far, we have shown that at most two variables in $\cT$ are not ordinary. Note that if $x_i$ is ordinary, the two equations that $x_i$ appears in must have coefficients of the same magnitude in \eqref{eqn:min-impl}. So we create a graph on $(*_1)$, \ldots, $(*_t)$, $(*)$ where for each ordinary variable $x_i$, we draw an edge between the two equations it appears in; to prove \ref{item:signs-pm1}, it suffices to show that this graph is connected. 

    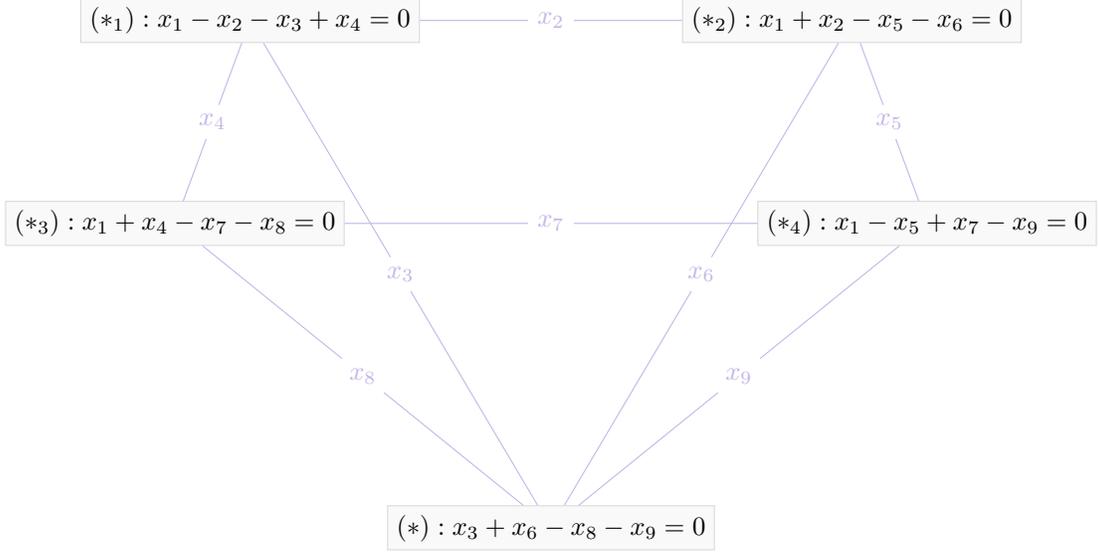
\begin{figure}[ht]
        \centering
        \begin{tikzpicture}[yscale = 0.9]
            \begin{scope}[every node/.style = {draw = gray!30, fill = gray!5}]
                \node (s1) at (-4, 3) {$(*_1) : x_1 - x_2 - x_3 + x_4 = 0$};
                \node (s2) at (4, 3) {$(*_2) : x_1 + x_2 - x_5 - x_6 = 0$};
                \node (s3) at (-5, 0) {$(*_3) : x_1 + x_4 - x_7 - x_8 = 0$};
                \node (s4) at (5, 0) {$(*_4) : x_1 - x_5 + x_7 - x_9 = 0$};
                \node (s) at (0, -4.5) {$(*) : x_3 + x_6 - x_8 - x_9 = 0$};
            \end{scope}
            \foreach \i\j\k\l in {s1/s2/2, s1/s3/4, s1/s/3, s2/s4/5, s2/s/6, s3/s/8, s4/s/9, s3/s4/7} {
                \node [MediumPurple3!50] (x) at ($(\i)!0.5!(\j)$) {$x_\k$};
                \draw [MediumPurple3!50] (\i) -- (x) -- (\j);
            }
        \end{tikzpicture}
        \caption{An example of a minimal implication (we have ${*} = -{*_1} - {*_2} + {*_3} + {*_4}$) and the corresponding graph used to prove \ref{item:signs-pm1}, where every edge is labelled with the ordinary variable it corresponds to.}
    \end{figure}

    Assume for contradiction that this graph is \emph{not} connected, and consider some connected component that does not contain $(*)$; without loss of generality, suppose that this connected component consists of $(*_1)$, \ldots, $(*_{t'})$ for some $t' \leq t$. Now consider the equation $(*')$ defined by \[{*'} = c_1{*_1} + \cdots + c_t{*_{t'}}.\] No ordinary variable $x_i$ can appear in $(*')$ --- if the right-hand side contains one of the two equations where $x_i$ appears, then it must also contain the other one (by the way we defined our graph), and these two appearances must cancel each other out (because $x_i$ cancels out of \eqref{eqn:min-impl}). Since at most two variables are not ordinary, this means $(*')$ contains at most two variables. 

    But $(*')$ cannot be identically zero because $\cT$ is linearly independent. And its coefficients must sum to $0$, so it must be of the form $\alpha x_i - \alpha x_j = 0$ for some $\alpha \neq 0$ and $i \neq j$; this contradicts the validity of $\cT$. 

    So we have shown that our graph is connected, which completes the proof of \ref{item:signs-pm1}.
    
    Now \ref{item:signs-pm1} means that every variable appears an even number of times in $(*_1)$, \ldots, $(*_t)$, $(*)$. This completes the proof of \ref{item:variable-counts} by ruling out the extra case \ref{item:3-3}. It also proves \ref{item:unique} --- it shows that two difference equalities minimally implied by $\cT$ would have to contain the same set of variables (namely, the ones appearing an odd number of times among $(*_1)$, \ldots, $(*_t)$), and $\cT$ cannot imply two distinct difference equalities on the same four variables (as seen in Example \ref{ex:c-good}\ref{ex-item:2-eqns-4-vars}, this would contradict its validity). 
\end{proof}

We say a collection of $t$ linearly independent difference equalities is \emph{$2$-full} if it contains exactly $2t + 1$ variables. In our proofs of the technical lemmas, most $2$-good minimal implications we encounter will be $2$-full (i.e., they will fall into the second case of Lemma \ref{lem:2-good-min-impl}\ref{item:variable-counts}). So we now prove a few statements that help us handle how such objects interact with each other. 

\begin{lemma} \label{lem:2-full-intersection}
    Suppose that $\cT_1$ and $\cT_2$ are non-disjoint sets of difference equalities whose union is linearly independent and $2$-good. If $\cT_1$ and $\cT_2$ are both $2$-full, then so is $\cT_1 \cap \cT_2$. 
\end{lemma}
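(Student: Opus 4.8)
The plan is a short double-counting argument on variable sets. Write $\cT_0 = \cT_1 \cap \cT_2$ and $\cT = \cT_1 \cup \cT_2$, and for a collection $\cS$ of difference equalities let $V(\cS)$ be the set of variables appearing in $\cS$; note $V(\cT) = V(\cT_1) \cup V(\cT_2)$. Set $t_i = \abs{\cT_i}$ for $i \in \{0, 1, 2\}$. Since $\cT_1$ and $\cT_2$ are non-disjoint, $\cT_0$ is nonempty, so $t_0 \geq 1$ (this is exactly what makes ``$2$-full'' a meaningful assertion for $\cT_0$), and $\abs{\cT} = t_1 + t_2 - t_0$. Because $\cT$ is linearly independent, so are all of $\cT_0$, $\cT_1$, $\cT_2$.

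First I would record the relevant variable counts. Since $\cT_1$ and $\cT_2$ are $2$-full, $\abs{V(\cT_1)} = 2t_1 + 1$ and $\abs{V(\cT_2)} = 2t_2 + 1$. Since $\cT$ is $2$-good it is in particular $2$-light, so applying the definition of $2$-light to the collection $\cT$ itself (a linearly independent collection of $t_1 + t_2 - t_0 \geq 1$ difference equalities, which of course implies itself) gives $\abs{V(\cT)} \geq 2(t_1 + t_2 - t_0) + 1$. Now inclusion--exclusion on the variable sets yields
\[
\abs{V(\cT_1) \cap V(\cT_2)} = \abs{V(\cT_1)} + \abs{V(\cT_2)} - \abs{V(\cT)} \leq (2t_1 + 1) + (2t_2 + 1) - \bigl(2(t_1 + t_2 - t_0) + 1\bigr) = 2t_0 + 1.
\]
Every variable appearing in $\cT_0$ appears in both $\cT_1$ and $\cT_2$, so $V(\cT_0) \subseteq V(\cT_1) \cap V(\cT_2)$, whence $\abs{V(\cT_0)} \leq 2t_0 + 1$.

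For the matching lower bound I would use that $\cT_0 \subseteq \cT$: any collection of independent difference equalities implied by $\cT_0$ is also implied by $\cT$, which is $2$-light, so $\cT_0$ is $2$-light as well; applying $2$-lightness to $\cT_0$ itself gives $\abs{V(\cT_0)} \geq 2t_0 + 1$. Combining the two bounds forces $\abs{V(\cT_0)} = 2t_0 + 1$, i.e., $\cT_0$ is $2$-full, as desired.

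There is no substantial obstacle here; the only points that need a little care are that the $2$-light condition is inherited by subsets (so that it may be applied to $\cT_0$), that it may be applied to a collection relative to itself (so that a linearly independent $2$-light collection of $t$ equations has at least $2t+1$ variables), and that the non-disjointness hypothesis is precisely what guarantees $t_0 \geq 1$.
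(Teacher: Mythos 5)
Your proof is correct and is essentially the paper's own argument: both proofs combine the exact variable counts from $2$-fullness of $\cT_1$ and $\cT_2$ with inclusion--exclusion and the $2$-lightness of $\cT_1 \cup \cT_2$ (applied once to the union itself for the upper bound on $\abs{V(\cT_0)}$ and once to the subcollection $\cT_0$ for the lower bound) to force $\abs{V(\cT_0)} = 2\abs{\cT_0} + 1$. The only difference is presentational --- you pass through $V(\cT_1) \cap V(\cT_2)$ explicitly, while the paper bounds the variable count of the union directly --- so there is nothing to correct.
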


\begin{proof}
    Suppose that $\cT_1$ and $\cT_2$ contain $t_1$ and $t_2$ equations and $2t_1 + 1$ and $2t_2 + 1$ variables (respectively), and that $\cT_1 \cap \cT_2$ contains $t$ equations and $v$ variables. Then $\cT_1 \cup \cT_2$ contains exactly $t_1 + t_2 - t$ equations (by inclusion-exclusion), and at most $(2t_1 + 1) + (2t_2 + 1) - v$ variables (since the $v$ variables in $\cT_1 \cap \cT_2$ are all shared by $\cT_1$ and $\cT_2$; the reason we have `at most' is that other variables could be shared as well). 

    Then the fact that $\cT_1 \cup \cT_2$ is $2$-good means that $v \geq 2t + 1$ and \[(2t_1 + 1) + (2t_2 + 1) - v \geq 2(t_1 + t_2 - t) + 1,\] which rearranges to $v \leq 2t + 1$. This means we must have $v = 2t + 1$, so $\cT_1 \cap \cT_2$ is $2$-full. 
\end{proof}

\begin{figure}[ht]
    \begin{tikzpicture}
        \node (1) at (0, 0) {$x_1 - x_2 - x_3 + x_4 = 0$};
        \node (2) at (0, -1) {$x_1 + x_2 - x_5 - x_6 = 0$};
        \node (3) at (0, -2) {$x_1 + x_4 - x_5 - x_7 = 0$};
        \node (4) at (0, 1) {$x_1 + x_7 - x_8 - x_9 = 0$};
        \node (5) at (0, -3) {$x_1 + x_7 - x_{10} - x_{11} = 0$};
        \node (6) at (0, -4) {$x_{10} + x_{11} - x_{12} - x_{13} = 0$};
        \filldraw [MediumPurple3, fill opacity = 0.1, draw opacity = 0.3] (-2.25, -2.35) rectangle (2.25, 1.35);
        \filldraw [DarkSlateGray3, fill opacity = 0.1, draw opacity = 0.3] (-2.25, -4.35) rectangle (2.25, 0.35);
    \end{tikzpicture}

    \caption{Two overlapping $2$-full sets of difference equalities as in Lemma \ref{lem:2-full-intersection}, with $\cT_1$ in purple and $\cT_2$ in blue. Here $\cT_1$ has $4$ equations on $9$ variables and $\cT_2$ has $5$ equations on $11$ variables; their intersection has $3$ equations on $7$ variables, and their union has $6$ equations on $13$ variables.}
\end{figure}

\begin{lemma} \label{lem:box-subbox}
    Let $\cT = \{(*_1), \ldots, (*_t)\}$ be a $2$-good $2$-full minimal implication producing a difference equality $(*)$, and let $\xi_1, \ldots, \xi_t \in \{\pm 1\}$ be such that ${*} = \xi_1{*_1} + \cdots + \xi_t{*_t}$. Let $\cT' = \{(*_1), \ldots, (*_{t'})\}$ for some $t' \leq t$, and suppose that $\cT'$ is $2$-full. Then the equation $(*')$ defined by \[{*'} = \xi_1{*_1} + \cdots + \xi_{t'}{*_{t'}}\] is a difference equality.
\end{lemma}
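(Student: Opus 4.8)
The plan is to pin down the coefficient vector $u$ of $(*')$ (the vector with $(*') : \sum_i u_i x_i = 0$) and show it has exactly four nonzero entries, all equal to $\pm 1$; then $(*')$ reads $x_a + x_b = x_c + x_d$ for distinct $a,b,c,d$, which is a difference equality. Three facts about $u$ are immediate. First, $u \neq 0$, since $\cT$ is linearly independent and all $\xi_j$ are nonzero. Second, $\sum_i u_i = 0$, since the coefficients of every difference equality --- hence of every $\QQ$-combination of them --- sum to zero; in particular $u$ has at least two nonzero entries. Third, $\cT$ being $2$-good is valid and collinearity-free, and $\cT$ implies $(*')$ (as $\cT' \subseteq \cT$ implies it), so $(*')$ cannot involve exactly two variables (that would be $x_a = x_b$, contradicting validity) nor exactly three (contradicting collinearity-freeness); hence $u$ has at least four nonzero entries. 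It therefore suffices to prove $\sum_i |u_i| \le 4$: combined with the last sentence, this forces $u$ to have exactly four entries, each $\pm 1$, and (by $\sum_i u_i = 0$) two of each sign.

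To bound $\sum_i |u_i|$ I would read the coefficients of $(*') = \sum_{j=1}^{t'}\xi_j{*_j}$ off the occurrence structure of $\cT$. Since $\cT$ is $2$-good and $2$-full, Lemma~\ref{lem:2-good-min-impl}(i) provides one ``special'' variable $z$ occurring four times among $(*_1),\ldots,(*_t),(*)$, with every other variable occurring exactly twice. For an ordinary variable $v \neq z$ with both occurrences among $(*_1),\ldots,(*_{t'})$, the two terms cancel in $(*')$ exactly as they do in the identity ${*} = \sum_{j=1}^t \xi_j{*_j}$ (where $v$ does not appear), so $v$ is absent from $(*')$; for $v$ occurring once among $(*_1),\ldots,(*_{t'})$ and once among $\{(*),(*_{t'+1}),\ldots,(*_t)\}$, its coefficient in $(*')$ is $\pm 1$; and for $v$ absent from $(*_1),\ldots,(*_{t'})$, it is absent from $(*')$. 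So the variables of $(*')$ are among $z$ and the ordinary variables ``on the boundary'' of the prefix $\{(*_1),\ldots,(*_{t'})\}$. Finally, if $z$ occurs in exactly $p$ of $(*_1),\ldots,(*_{t'})$, its coefficient $u_z$ in $(*')$ is the total coefficient of $z$ in $\sum_{j=1}^t\xi_j{*_j}$ (which equals the coefficient of $z$ in $(*)$, so is $0$ or $\pm 1$) minus the contributions of the at most $4 - p$ occurrences of $z$ outside the prefix, giving $|u_z| \le 4 - p$.

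It remains to count the boundary ordinary variables, and this is where $2$-fullness of $\cT'$ enters: $\cT'$ has $2t'+1$ variables but $4t'$ variable-slots, so exactly $2t'-1$ of its variables occur twice in it and the rest once. Subtracting the contribution of $z$ --- which must lie in $\cT'$, since if not, the same slot count would make $(*')$ have exactly two variables, contradicting validity of $\cT$ --- leaves exactly $p$ ordinary variables occurring once in $\cT'$, i.e.\ exactly $p$ boundary ordinary variables. Hence $\sum_i |u_i| \le p\cdot 1 + |u_z| \le p + (4-p) = 4$, which together with the first paragraph finishes the proof.

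The individual computations (tracking the signs $\xi_j$, the cancellation identity for $(*)$, and the slot count) are routine; the step I expect to need the most care is the bookkeeping around $z$ --- verifying $|u_z| \le 4-p$ cleanly while handling whether or not $z$ occurs in $(*)$, and confirming $z$ is forced into $\cT'$ --- since an off-by-one there would break the final bound $\le 4$. One can avoid the ``$z \in V'$'' case split by arguing uniformly that in every case $(*')$ is nonzero with $\sum_i|u_i| \le 4$, and then using validity and collinearity-freeness of $\cT$ to rule out $(*')$ having two or three variables, so it has exactly four, each $\pm 1$.
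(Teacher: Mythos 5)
Your proposal is correct, and it relies on the same ingredients as the paper's proof --- Lemma \ref{lem:2-good-min-impl}\ref{item:variable-counts} to obtain one special variable appearing four times and all others exactly twice, the cancellation of ordinary variables whose two occurrences both lie in the prefix $\cT'$, the rewriting ${*'} = {*} - \xi_{t'+1}{*_{t'+1}} - \cdots - \xi_t{*_t}$, and validity/collinearity-freeness to exclude equations with two or three variables --- but you organize them differently. The paper splits into three cases according to how many times the special variable occurs in $\cT'$ (at most twice, which it rules out; three times; four times) and in the surviving cases reads off exactly four $\pm 1$ coefficients directly. You instead prove one uniform inequality, $\sum_i \abs{u_i} \leq p + (4 - p) = 4$, and squeeze it against the lower bound of four distinct variables (nonzero by independence of $\cT'$, coefficient sum zero, validity, collinearity-freeness), forcing exactly four coefficients equal to $\pm 1$ with two of each sign; this avoids the case analysis entirely and silently tolerates the sub-cases that turn out to be vacuous (the paper's Case 1). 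Two small points of bookkeeping, neither a gap: your intermediate sentence that ``exactly $2t'-1$ of the variables of $\cT'$ occur twice and the rest once'' is only literally true when the special variable occurs at most twice in $\cT'$, but the count you actually use --- exactly $p$ once-occurring ordinary variables when the special variable occurs $p$ times --- holds in every case by the same slot count ($4t'$ slots versus $2t'+1$ variables); and the bound $\abs{u_z} \leq 4 - p$ is correct once the occurrence of $z$ in $(*)$ and its occurrences in the suffix are tallied together as the $4 - p$ occurrences outside the prefix, exactly as you flag.
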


In other words, Lemma \ref{lem:box-subbox} states that if we have a $2$-full $2$-good minimal implication $\cT$, then any $2$-full subset $\cT' \subseteq \cT$ is itself a minimal implication, and the coefficients it uses to imply $(*')$ match the coefficients of $\cT$ used to imply $(*)$. In particular, if we took $\cT$ and replaced $\cT'$ with $(*')$, then the resulting collection $\{(*'), (*_{t' + 1}), \ldots, (*_t)\}$ would still be a minimal implication producing $(*)$. 

\begin{figure}[ht]
    \centering 
    \begin{tikzpicture}
        \node (1) at (0, 0) {$(*_1) : x_1 - x_2 - x_3 + x_4 = 0$};
        \node (2) at (0, -1) {$(*_2) : x_1 + x_2 - x_5 - x_6 = 0$};
        \node (3) at (0, -2) {$(*_3) : x_1 + x_4 - x_5 - x_7 = 0$};
        \node (4) at (0, -3) {$(*_4) : x_1 + x_7 - x_8 - x_9 = 0$};
        \node [DarkSlateGray3] at (0, -4) {$(*) : x_3 + x_6 - x_8 - x_9 = 0$};
        \node [DarkSlateGray3] at (8, -4) {$(*) : x_3 + x_6 - x_8 - x_9 = 0$};
        \filldraw [DarkSlateGray3, fill opacity = 0.1, draw opacity = 0.3] (-2.75, -3.5) rectangle (2.75, 0.5);
        \filldraw [MediumPurple3, fill opacity = 0.1, draw opacity = 0.3] (-2.5, -2.3) rectangle (2.5, 0.3);
        \node (5) at (8, -1) {$(*') : -x_1 + x_3 + x_6 - x_7 = 0$};
        \node (6) at (8, -3) {$(*_4) : x_1 + x_7 - x_8 - x_9 = 0$};
        \filldraw [DarkSlateGray3, fill opacity = 0.1, draw opacity = 0.3] (8 - 2.75, -3.5) rectangle (8 + 2.75, -0.5);
        \filldraw [MediumPurple3, fill opacity = 0.1, draw opacity = 0.3] (8 - 2.5, -1.3) rectangle (8 + 2.5, -0.7);
        \node [MediumPurple3!50, font = {\Huge}] at (4, -1) {$\rightsquigarrow$};
    \end{tikzpicture}
    \caption{An example of the situation described in Lemma \ref{lem:box-subbox}, with $\cT$ shown in blue and $\cT'$ in purple (we have ${*} = -{*_1} - {*_2} + {*_3} + {*_4} = {*'} + {*_4}$). }
\end{figure}

\begin{proof}
    First, by Lemma \ref{lem:2-good-min-impl}\ref{item:variable-counts}, one variable $x_i$ appears four times among $(*_1)$, \ldots, $(*_t)$, $(*)$, and all other variables in $\cT$ appear exactly twice; we call the variables appearing twice \emph{ordinary}. Note that if an ordinary variable appears twice among $(*_1)$, \ldots, $(*_{t'})$, then it must cancel out of $\xi_1{*_1} + \cdots + \xi_{t'}{*_{t'}}$, which means it cannot appear in $(*')$. Meanwhile, any variable appearing once among $(*_1)$, \ldots, $(*_{t'})$ must appear in $(*')$. 

    In total, $(*_1)$, \ldots, $(*_{t'})$ have $2t' + 1$ variables (because $\cT'$ is $2$-full) and $4t'$ slots for variables to appear in. Since $x_i$ can appear at most four times and every ordinary variable can appear at most twice, there are three ways in which this could occur. 

    \case{1}{$2t' - 1$ variables appear twice in $\cT'$, and two appear once} Then $(*')$ contains at most three variables --- the two variables that appear once in $\cT'$, and possibly $x_i$. This contradicts either the fact that $\cT$ is valid (if $(*')$ contains two variables) or that it is collinearity-free (if $(*')$ contains three variables), so this case is impossible. 
    
    \case{2}{$x_i$ appears three times in $\cT'$, $2t' - 3$ variables appear twice, and three appear once} Then $(*')$ contains exactly four variables --- $x_i$ and the three variables that appear once in $\cT'$. Furthermore, each of these four variables appears exactly once among $(*_{t' + 1})$, \ldots, $(*_t)$, $(*)$ (since $\cT'$ accounts for all but one of their appearances), and we can write \[{*'} = {*} - \xi_{t' + 1}{*_{t' + 1}} - \cdots - \xi_t{*_t}.\] So each of these variables has coefficient $\pm 1$ in $(*')$ (since it appears exactly once on the right-hand side, where it has coefficient $\pm 1$); and since the coefficients of $(*')$ sum to $0$, this means $(*')$ is a difference equality. 

    \case{3}{$x_i$ appears four times in $\cT'$, $2t' - 4$ variables appear twice, and four appear once} Then by again writing ${*'} = {*} - \xi_{t' + 1}{*_{t' + 1}} - \cdots - \xi_t{*_t}$, we can see that $(*')$ must contain exactly four variables (the four which appear once in $\cT'$ --- it cannot contain $x_i$ because none of $(*_{t' + 1})$, \ldots, $(*_t)$, $(*)$ contain $x_i$), each with coefficient $\pm 1$; so $(*')$ is again a difference equality. 
\end{proof}

Finally, we need the observation that we cannot have a $2$-full collection of size $2$ (this is essentially the situation from Example \ref{ex:c-good}\ref{ex-item:collinearity}).

\begin{claim} \label{claim:2-eqns-5-vars}
    Suppose that $(*_1)$ and $(*_2)$ are distinct difference equalities such that $\{(*_1), (*_2)\}$ is valid and collinearity-free. Then $(*_1)$ and $(*_2)$ together contain at least six variables. 
\end{claim}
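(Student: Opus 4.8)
The plan is to reduce the statement to a short sign-counting argument. The first step is a structural observation: because $\{(*_1),(*_2)\}$ is valid and collinearity-free, each of $(*_1)$ and $(*_2)$ must \emph{by itself} contain exactly four distinct variables. Indeed, a difference equality is nontrivial and has coefficients summing to zero, so it involves at least two variables; if $(*_1)$ involved exactly two, it would be $\alpha(x_i - x_j) = 0$ with $i \ne j$, and since $\{(*_1),(*_2)\}$ implies $(*_1)$ this would contradict validity; if it involved exactly three, then $\{(*_1),(*_2)\}$ would imply a three-variable equation, contradicting collinearity-freeness. (Here I also use that two \emph{distinct} difference equalities are automatically linearly independent, since the content of a difference equality is determined up to sign.)

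Next, suppose for contradiction that $(*_1)$ and $(*_2)$ together involve at most five variables. Since each involves exactly four, the set $S$ of shared variables has size $m \in \{3,4\}$, and there are $8 - 2m \in \{0,2\}$ private variables, split one per equation. I would record the contents of $(*_1)$ and $(*_2)$ as $\pm 1$ sign patterns on their variables (all nonzero coordinates summing to zero) and examine the two combinations $(*_1) - (*_2)$ and $(*_1) + (*_2)$. A private variable survives in both combinations; a shared variable survives in $(*_1) - (*_2)$ precisely when its signs agree and in $(*_1) + (*_2)$ precisely when they disagree. A short count on the number $a$ of shared variables with agreeing signs (using linear independence to rule out $a = 0$ and $a = m$, and the coefficient-sum-zero condition to rule out a lone surviving variable) shows that one of $(*_1) \pm (*_2)$ is a nonzero equation supported on at most three variables. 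If it is supported on two variables it has the form $\beta(x_i - x_j) = 0$ with $i \ne j$, contradicting validity; if on exactly three, it is a three-variable equation implied by $\{(*_1),(*_2)\}$, contradicting collinearity-freeness. Either way we reach a contradiction, so $(*_1)$ and $(*_2)$ involve at least six variables. (When $m = 4$ this reproduces Example~\ref{ex:c-good}\ref{ex-item:2-eqns-4-vars}.)

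I do not anticipate any real obstacle; the only thing needing care is the bookkeeping in the sign count --- checking that for each value of $m$ and $a$ the appropriate combination really drops to at most three surviving variables, and that the small degenerate sub-cases are handled (for instance when $(*_1) - (*_2)$ has only the two private variables $d$ and $e$ surviving, or when a naive count would leave a single surviving variable, which is impossible since the coefficients must sum to zero). The fact that the two private variables are genuinely distinct from each other and from the elements of $S$ is forced by the assumption that only five variables appear in all.
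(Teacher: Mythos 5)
Your proposal is correct and is essentially the paper's own argument: both proofs first note that validity and collinearity-freeness force each equation to have four distinct variables, then cancel shared variables in a combination $*_1 \pm *_2$ to produce a nonzero implied equation on at most three variables, contradicting validity (two variables) or collinearity-freeness (three variables) --- the paper merely shortcuts your case analysis over $m$ and $a$ with a pigeonhole: among the at least three shared variables, two must agree in sign after possibly negating $*_2$, so the single combination $*_1 - *_2$ already has at most three variables. One small bookkeeping slip: a shared variable \emph{cancels} in $*_1 - *_2$ exactly when its signs agree and survives when they disagree (you state the opposite), but since your conclusion only asserts that one of $*_1 \pm *_2$ is small, the roles of the two combinations are symmetric and the argument goes through unchanged.
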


\begin{proof}
    Assume not, so that $(*_1)$ and $(*_2)$ share at least three variables. We can assume without loss of generality that at least two of these variables have the same signs in $*_1$ and $*_2$ (otherwise two have opposite signs, and we can negate $*_2$ to make them have the same signs). Then if we let $(*)$ be the equation given by \[{*} = {*_1} - {*_2},\] these two variables must cancel out; this means $(*)$ contains at most three variables. 

    But the coefficients of $(*)$ sum to $0$, so if $(*)$ contains two variables then $\{(*_1), (*_2)\}$ is invalid, while if it contains three variables then $\{(*_1), (*_2)\}$ is collinearity-inducing. 
\end{proof}

\section{Certifications by equations containing \texorpdfstring{$x_i$}{xi}} \label{sec:impls-at-i}

In this section, we prove Lemmas \ref{lem:baseline-technical} and \ref{lem:contained-in-s}. Here, we have a collection $\cS$ of at most $s$ independent difference equalities all involving $x_i$, and we want to understand how many pairs $(i, \bullet)$ or $(i', j') \in [i - 1]^2$ it certifies. The crucial input to both proofs is the following lemma, which gives very good control on what minimal implications in $\cS$ look like. 

\begin{lemma}\label{lem:impls-are-small}
    Any subset of $\cS$ which forms a minimal implication has size at most $4$. 
\end{lemma}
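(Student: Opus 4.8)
The plan is to bound $t = |\cT|$ for a minimal implication $\cT = \{(*_1), \dots, (*_t)\} \subseteq \cS$ producing a difference equality $(*)$ by a double-counting argument on variable slots, crucially exploiting the hypothesis that \emph{every} equation of $\cS$ — hence every $(*_j)$ — contains the fixed variable $x_i$.

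First I would record the relevant structure. A subset of a $c$-good collection is again $c$-good (validity, collinearity-freeness, and $c$-lightness all descend to subsets), so $\cT$ is $c$-good; hence each $(*_j)$, and also $(*)$ since it is implied by $\cT$, is a difference equality in four \emph{distinct} variables, by the observation following the definition of $c$-good. Thus the $t+1$ equations $(*_1), \dots, (*_t), (*)$ have exactly $4(t+1)$ variable slots. Writing ${*} = c_1{*_1} + \dots + c_t{*_t}$ with all $c_j \ne 0$ (possible since $\cT$ minimally implies $(*)$), every variable that occurs at all occurs in at least two of these $t+1$ equations: a variable appearing in a single $(*_j)$ has nonzero coefficient on the right-hand side and so survives into $(*)$, while a variable occurring only in $(*)$ would have coefficient $0$ there. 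Moreover $x_i$ sits in each of $(*_1), \dots, (*_t)$, so it alone occupies at least $t$ slots.

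Then I would combine two inequalities on $v$, the number of distinct variables present. Slot-counting gives $4(t+1) \ge t + 2(v - 1)$, i.e.\ $v \le \tfrac{3t}{2} + 3$. On the other hand $\cT$ consists of $t$ independent difference equalities and is $c$-light (being $c$-good), so $v \ge ct + 1$. Hence $ct + 1 \le \tfrac{3t}{2} + 3$, i.e.\ $t(2c - 3) \le 4$; since $c$ is taken sufficiently close to $2$ (any $c > 19/10$ suffices, and certainly $c = 2 - 2^{-29}$ does), we have $2c - 3 > 4/5$, so $t < 5$, i.e.\ $t \le 4$. I do not expect a genuine obstacle here: the only points needing care are verifying that $c$-goodness descends to $\cT$ (so the "four distinct variables" observation applies to each $(*_j)$ and to $(*)$) and that the slot bound remains valid when $x_i$ additionally appears in $(*)$ — which only helps. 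Conceptually this is the baseline slot-count underlying $c$-goodness, sharpened by the observation that the common variable $x_i$ already accounts for $t$ of the slots.
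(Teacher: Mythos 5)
Your proposal is correct and follows essentially the same argument as the paper: the same slot-count ($2(v-1) + t \leq 4(t+1)$, using that every variable appears at least twice among $(*_1), \ldots, (*_t), (*)$ and that $x_i$ appears in all $t$ equations of $\cT$) combined with the $c$-lightness bound $v \geq ct + 1$, yielding $t \leq 4$ for $c$ close to $2$. The only difference is that you spell out a couple of routine justifications (why every variable appears twice, and that $c$-goodness passes to the subset $\cT$) that the paper leaves implicit.
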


\begin{proof}
    Let $\cT = \{(*_1), \ldots, (*_t)\} \subseteq \cS$ be a minimal implication producing a difference equality $(*)$. Then we can write \[{*} = c_1{*_1} + \cdots + c_t{*_t}\] for nonzero $c_1, \ldots, c_t \in \QQ$. Every variable in $\cT$ must appear in at least two of $(*_1)$, \ldots, $(*_t)$, $(*)$, and $x_i$ appears in at least $t$ (it appears in all the equations in $\cT$, and may or may not appear in $(*)$). Meanwhile, each equation only has four slots for variables to appear. So if $\cT$ contains $v$ variables, then we have \[2(v - 1) + t \leq 4(t + 1),\] which rearranges to $v \leq (3t + 6)/2$. Meanwhile, the fact that $\cT$ is $c$-good means that $v \geq ct + 1$. If $c$ is sufficiently close to $2$, this is a contradiction if $t \geq 5$; so we get that $t \leq 4$. 
\end{proof}

For each $t \leq 4$, we define a \emph{$t$-implication} to be a size-$t$ subset of $\cS$ which forms a minimal implication (so Lemma \ref{lem:impls-are-small} means that every difference equality implied by $\cS$ is produced by some $t$-implication). By Claim \ref{claim:c-to-2}, every $t$-implication (and every union of a small number of $t$-implications) is $2$-good; this means we can use the results of Section \ref{sec:observations} to analyze the structure of our $t$-implications.

In particular, Lemma \ref{lem:2-good-min-impl}\ref{item:unique} means that every $t$-implication produces exactly one difference equality, and Lemma \ref{lem:2-good-min-impl}\ref{item:variable-counts} means that this difference equality involves $x_i$ if $t \in \{1, 3\}$ and does not involve $x_i$ if $t \in \{2, 4\}$. 

Of course $1$-implications are trivial (the difference equality produced by $\{(*_1)\}$ is just $(*_1)$ itself). In Subsection \ref{subsec:t-impls}, we will prove a few facts about the structure of $t$-implications for $t \in \{2, 3, 4\}$; in Subsection \ref{subsec:proof-impls-xi}, we will use these facts to deduce Lemmas \ref{lem:baseline-technical} and \ref{lem:contained-in-s}. 

\subsection{Analyzing \texorpdfstring{$t$}{t}-implications} \label{subsec:t-impls} First, a $2$-implication is a collection $\{(*_1), (*_2)\}$ of difference equalities containing $x_i$ which minimally implies a difference equality $(*)$ not containing $x_i$. Such collections are easy to describe --- $(*_1)$ and $(*_2)$ must share exactly one variable other than $x_i$ (they cannot share more than one other variable by Claim \ref{claim:2-eqns-5-vars}), and either this variable must occur with the same sign as $x_i$ in both $(*_1)$ and $(*_2)$, or it must occur with opposite sign as $x_i$ in $(*_1)$ and $(*_2)$. In the first case, we say $(*_1)$ and $(*_2)$ are \emph{difference-aligned}; in the second case, we say $(*_1)$ and $(*_2)$ are \emph{sum-aligned}. 

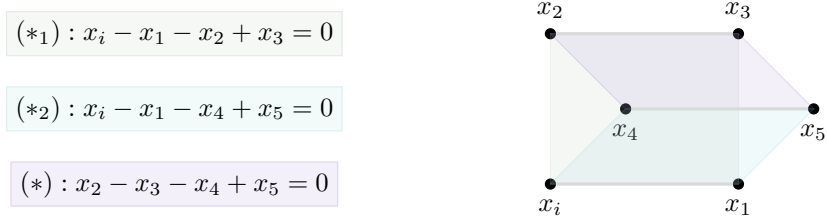
\begin{figure}[ht]
    \begin{tikzpicture}
        \node [fill = Honeydew3!15, draw = Honeydew3!45] at (0, 0) {$(*_1) : x_i - x_1 - x_2 + x_3 = 0$};
        \node [fill = DarkSlateGray3!10, draw = DarkSlateGray3!30] at (0, -1) {$(*_2) : x_i - x_1 - x_4 + x_5 = 0$};
        \node [fill = MediumPurple3!10, draw = MediumPurple3!30] at (0, -2) {$(*) : x_2 - x_3 - x_4 + x_5 = 0$};
        \node [sdot, label = below: {$x_i$}] (i) at (5, -2) {};
        \node [sdot, label = below: {$x_1$}] (1) at (7.5, -2) {};
        \node [sdot, label = above: {$x_2$}] (2) at (5, 0) {};
        \node [sdot, label = above: {$x_3$}] (3) at ($(1) + (2) - (i)$) {};
        \node [sdot, label = below: {$x_4$}] (4) at (6, -1) {};
        \node [sdot, label = below: {$x_5$}] (5) at ($(1) + (4) - (i)$) {};
        \foreach \i\j\k\l\c\p in {i/1/3/2/Honeydew3/0.15, i/1/5/4/DarkSlateGray3/0.1, 2/3/5/4/MediumPurple3/0.1} {
            \filldraw [\c, draw opacity = 0.25, fill opacity = \p] (\i.center) -- (\j.center) -- (\k.center) -- (\l.center) -- cycle;
        }
        \foreach \i\j in {i/1, 2/3, 4/5} {
            \draw [gray!30, very thick] (\i) -- (\j);
        }
    \end{tikzpicture}
    \caption{A difference-aligned $2$-implication, which looks like three pairs with equal differences.}
\end{figure}

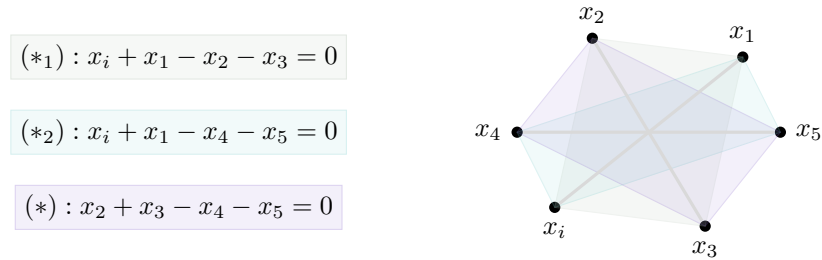
\begin{figure}[ht]
    \begin{tikzpicture}
        \node [fill = Honeydew3!15, draw = Honeydew3!45] at (0, 0) {$(*_1) : x_i + x_1 - x_2 - x_3 = 0$};
        \node [fill = DarkSlateGray3!10, draw = DarkSlateGray3!30] at (0, -1) {$(*_2) : x_i + x_1 - x_4 - x_5 = 0$};
        \node [fill = MediumPurple3!10, draw = MediumPurple3!30] at (0, -2) {$(*) : x_2 + x_3 - x_4 - x_5 = 0$};
        \node [sdot, label = below: {$x_i$}] (i) at (5, -2) {};
        \node [sdot, label = above: {$x_1$}] (1) at (7.5, 0) {};
        \node [sdot, label = above: {$x_2$}] (2) at (5.5, 0.25) {};
        \node [sdot, label = below: {$x_3$}] (3) at ($(1) + (i) - (2)$) {};
        \node [sdot, label = left: {$x_4$}] (4) at (4.5, -1) {};
        \node [sdot, label = right: {$x_5$}] (5) at ($(1) + (i) - (4)$) {};
        \foreach \i\j\k\l\c\p in {i/2/1/3/Honeydew3/0.15, i/4/1/5/DarkSlateGray3/0.1, 2/4/3/5/MediumPurple3/0.1} {
            \filldraw [\c, draw opacity = 0.25, fill opacity = \p] (\i.center) -- (\j.center) -- (\k.center) -- (\l.center) -- cycle;
        }
        \foreach \i\j in {i/1, 2/3, 4/5} {
            \draw [gray!30, very thick] (\i) -- (\j);
        }
    \end{tikzpicture}
    \caption{A sum-aligned $2$-implication, which looks like three pairs with equal sums.}
\end{figure}

We now prove a few facts about $3$-implications. First note that $x_i$ appears in all three equations of a $3$-implication, so by Lemma \ref{lem:2-good-min-impl}\ref{item:variable-counts}, it must be $2$-full (i.e., it contains seven variables). 

\begin{claim} \label{claim:3-impls-cert-5}
    Any $3$-implication certifies at most five pairs $(i, \bullet)$. 
\end{claim}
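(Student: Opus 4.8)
The plan is to analyze the four difference equalities containing $x_i$ that the $3$‑implication $\cT = \{(*_1), (*_2), (*_3)\}$ implies, and to show that the (up to) eight pairs $(i, \bullet)$ these equalities naively certify collapse to at most five. First I would pin down the structure of $\cT$ using Section \ref{sec:observations}: since $\sabs{\cT} = 3 < 1/(2-c)$, Claim \ref{claim:c-to-2} makes $\cT$ $2$‑good, so Lemma \ref{lem:2-good-min-impl} applies. Because $x_i$ occurs in all three equations of $\cT$, part \ref{item:variable-counts} forces $\cT$ to be $2$‑full with $x_i$ occurring four times (hence also in the produced difference equality $(*)$) and each of the other six variables occurring exactly twice; part \ref{item:signs-pm1} lets me normalize contents so that ${*} = {*_1} + {*_2} + {*_3}$; and part \ref{item:unique}, together with the observation that any two of $(*_1), (*_2), (*_3)$ forming a minimal implication produce an equation \emph{not} containing $x_i$ (again by part \ref{item:variable-counts} with $t = 2$), shows that the only difference equalities containing $x_i$ implied by $\cT$ are $(*_1), (*_2), (*_3), (*)$.

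Next I would translate "certifies a pair $(i, \bullet)$" into combinatorics of these four equations. Each has the form $x_i + \pi_a = q_a + r_a$, where I call $\pi_a$ the \emph{partner} of $x_i$ (the variable on its side) and $q_a, r_a$ the two \emph{non‑partners}. Since $\cT$ is collinearity‑free, the "$j' = i$" clause in the definition of certification is never triggered (it would require $\cT$ to imply a three‑variable equation), so $(i, y)$ is certified exactly when $y$ occurs as a non‑partner in at least one of the four equations. As each of the six ordinary variables occurs in precisely two of the four equations, the number of distinct certified pairs $(i, \bullet)$ is $6 - n_0$, where $n_0$ is the number of ordinary variables that are a partner in \emph{both} of their equations; equivalently, $n_0 \geq 1$ iff two of $\pi_1, \pi_2, \pi_3, \pi_0$ coincide. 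So it suffices to show that two of the four partners coincide.

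For that I argue by contradiction: assume $\pi_1, \pi_2, \pi_3, \pi_0$ are pairwise distinct. After possibly negating all four contents, $x_i$ has coefficient $+1$ in $(*)$; then exactly one of $(*_1), (*_2), (*_3)$ has $x_i$‑coefficient $-1$, say $(*_3)$. Tracking the coefficient of $\pi_1$ through the formal identity ${*_1} + {*_2} + {*_3} - {*} = 0$ — it contributes $+1$ from $(*_1)$ and must contribute $-1$ from its unique other appearance — a short sign check rules out that other appearance being "$\pi_1$ a partner in $(*_3)$" (that would give $\pi_1 = \pi_3$) and "$\pi_1$ a partner in $(*)$" (that would give $\pi_1 = \pi_0$), leaving only "$\pi_1$ a non‑partner in $(*_2)$"; symmetrically $\pi_2$ is a non‑partner in $(*_1)$. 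But then $(*_1)$ and $(*_2)$ both contain the three distinct variables $x_i, \pi_1, \pi_2$, contradicting Claim \ref{claim:2-eqns-5-vars} applied to the pair $\{(*_1), (*_2)\} \subseteq \cT$ (which is valid and collinearity‑free). Hence two partners coincide, $n_0 \geq 1$, and $\cT$ certifies at most five pairs $(i, \bullet)$.

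Most of this is bookkeeping; the one place that needs care — and which I would flag as the main obstacle — is the combinatorial dictionary between certified pairs and non‑partners: correctly establishing that a single difference equality $x_i + \pi_a = q_a + r_a$ certifies exactly $(i, q_a)$ and $(i, r_a)$ (but not $(i, \pi_a)$), and that nothing else implied by $\cT$ can certify $(i, \pi_a)$. Both of these facts hinge precisely on collinearity‑freeness and validity of $\cT$, which is where the hypothesis that $\cT$ is $c$‑good does the work; once that dictionary is in place the rest is the sign computation above.
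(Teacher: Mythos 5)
Your proof is correct and uses essentially the same ingredients and argument as the paper: reduce to the four equalities $(*_1),(*_2),(*_3),(*)$ via Lemma \ref{lem:2-good-min-impl}, track signs through ${*}=\pm{*_1}\pm{*_2}\pm{*_3}$, and invoke Claim \ref{claim:2-eqns-5-vars} to force some variable to sit on $x_i$'s side in both of its appearances, so its pair is never certified. The paper organizes this as a two-case analysis on whether the partner of $(*_1)$ reappears with opposite sign, while you phrase it as a contradiction from assuming all four partners distinct, but these are the same argument.
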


(When we consider the pairs $(i, \bullet)$ certified by a $3$-implication, we are looking at \emph{all} difference equalities containing $x_i$ that it implies, not necessarily minimally --- this means we consider its three equations as well as the one that it produces.) 

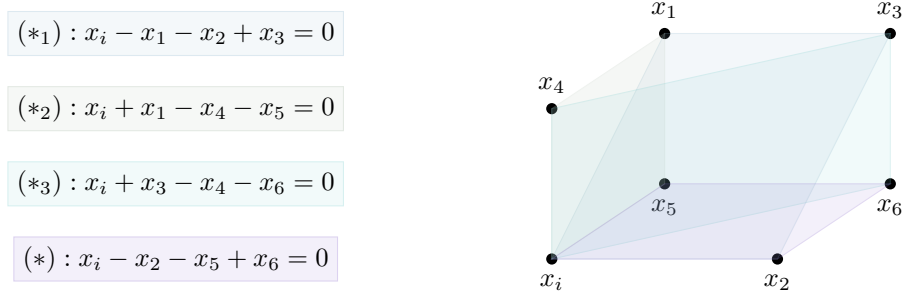
\begin{figure}[ht]
    \begin{tikzpicture}
        \node [fill = LightSkyBlue3!10, draw = LightSkyBlue3!30] at (0, 0) {$(*_1) : x_i - x_1 - x_2 + x_3 = 0$};
        \node [fill = Honeydew3!15, draw = Honeydew3!45] at (0, -1) {$(*_2) : x_i + x_1 - x_4 - x_5 = 0$};
        \node [fill = DarkSlateGray3!10, draw = DarkSlateGray3!30] at (0, -2) {$(*_3) : x_i + x_3 - x_4 - x_6 = 0$};
        \node [fill = MediumPurple3!10, draw = MediumPurple3!30] at (0, -3) {$(*) : x_i - x_2 - x_5 + x_6 = 0$};
        \node [sdot, label = below: {$x_i$}] (1) at (5, -3) {};
        \node [sdot, label = above: {$x_1$}] (2) at (6.5, 0) {};
        \node [sdot, label = below: {$x_2$}] (3) at (8, -3) {};
        \node [sdot, label = above: {$x_3$}] (4) at ($(2) + (3) - (1)$) {};
        \node [sdot, label = above: {$x_4$}] (5) at (5, -1) {};
        \node [sdot, label = below: {$x_5$}] (6) at ($(1) + (2) - (5)$) {};
        \node [sdot, label = below: {$x_6$}] (7) at ($(1) + (4) - (5)$) {};
        \foreach \i\j\k\l\c\p in {1/2/4/3/LightSkyBlue3/0.1, 1/5/2/6/Honeydew3/0.15, 1/5/4/7/DarkSlateGray3/0.1, 1/3/7/6/MediumPurple3/0.1} {
            \filldraw [\c, draw opacity = 0.25, fill opacity = \p] (\i.center) -- (\j.center) -- (\k.center) -- (\l.center) -- cycle;
        }
    \end{tikzpicture}
    \caption{An example of a $3$-implication, where ${*} = {*_1} + {*_2} - {*_3}$. This $3$-implication certifies $(i, 1)$, $(i, 2)$, $(i, 4)$, $(i, 5)$, and $(i, 6)$, but not $(i, 3)$. It looks like a cube with a missing vertex.} \label{fig:3-impl}
\end{figure}

\begin{proof}
    Let the $3$-implication be $\{(*_1), (*_2), (*_3)\}$, and let $(*)$ be the difference equality that it produces. We can assume without loss of generality that $x_i$ has positive coefficient in each of ${*_1}$, \ldots, ${*_3}$, $*$, and that \[{*} = {*_1} + {*_2} - {*_3}.\] Without loss of generality, suppose that \[{*_1} = x_i - x_1 - x_2 + x_3,\] so that $(*_1)$ certifies $(i, 1)$ and $(i, 2)$. If $x_3$ does not appear with opposite sign as $x_i$ in any of the three other equations, then the $3$-implication does not certify $(i, 3)$, and we are done (there are only six variables $x_j$ other than $x_i$ in the $3$-implication, and the $3$-implication can only certify $(i, j)$ for those indices $j$). This scenario is illustrated in Figure \ref{fig:3-impl}. 
    
    Meanwhile, if it does appear with opposite sign as $x_i$ in one of the three equations, that equation has to be $(*_2)$ (it can only appear in one other equation by Lemma \ref{lem:2-good-min-impl}\ref{item:variable-counts}, so if it appeared with opposite sign as $x_i$ in $(*_3)$ or $(*)$, then it would not cancel out of ${*_1} + {*_2} - {*_3} - {*}$). Since $(*_1)$ and $(*_2)$ cannot share more than two variables by Claim \ref{claim:2-eqns-5-vars}, we can assume without loss of generality that \[{*_2} = x_i - x_3 - x_4 + x_5.\] Then the same reasoning shows that the $3$-implication does not certify $(i, 5)$ (as $x_5$ does not appear with opposite sign as $x_i$ in $(*_1)$, and it cannot appear with opposite sign as $x_i$ in $(*_3)$ or $(*)$), so we are done. 
\end{proof}

\begin{claim} \label{claim:3-impls-disjoint}
    Any two distinct $3$-implications are disjoint. 
\end{claim}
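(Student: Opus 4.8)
The plan is to suppose for contradiction that two distinct $3$-implications $\cT_1$ and $\cT_2$ share a common equation, and then to use the density bounds from Section \ref{sec:observations} to show their intersection would be too dense to exist.

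First I would collect the relevant facts. Each $3$-implication contains $x_i$ in all three of its equations, so by Lemma \ref{lem:2-good-min-impl}\ref{item:variable-counts} it is $2$-full --- it has exactly $3$ equations and $7$ variables. Moreover $\cT_1, \cT_2 \subseteq \cS$, so the union $\cU = \cT_1 \cup \cT_2$ is linearly independent and, being a subset of the $c$-good collection $\cS$, is $c$-good; since $\abs{\cU} \le 5$, Claim \ref{claim:c-to-2} upgrades this to $\cU$ being $2$-good (here we use that $c$ is close enough to $2$ that $5 < 1/(2 - c)$).

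Next I would apply Lemma \ref{lem:2-full-intersection}: $\cT_1$ and $\cT_2$ are non-disjoint, both $2$-full, and have a linearly independent, $2$-good union, so $\cT_1 \cap \cT_2$ is $2$-full as well. But $\cT_1 \cap \cT_2$ is a subcollection of $\cS$ with $t$ equations for some $t \in \{1, 2\}$ (it is a proper subset of $\cT_1$, since $\cT_1 \ne \cT_2$ and both have size $3$), so being $2$-full forces it to contain exactly $2t + 1$ variables. If $t = 1$, this says the single difference equality in $\cT_1 \cap \cT_2$ has only $3$ variables, contradicting the fact that every difference equality in a $c$-good configuration contains four distinct variables. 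If $t = 2$, this says two distinct difference equalities forming a valid, collinearity-free collection have only $5$ variables, contradicting Claim \ref{claim:2-eqns-5-vars}. Either way we reach a contradiction, so $\cT_1$ and $\cT_2$ must be disjoint.

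I do not expect a genuine obstacle here, since the whole argument rests on lemmas already proven; the only points that need care are verifying the hypotheses of Lemma \ref{lem:2-full-intersection} --- in particular that $\cU$ is $2$-good, which is where $c$ being sufficiently close to $2$ enters --- and recalling that $3$-implications are $2$-full. As an alternative that avoids Lemma \ref{lem:2-full-intersection}, one can double-count directly: $\cU$ has $6 - \abs{\cT_1 \cap \cT_2}$ equations, so $2$-goodness forces $\cU$ to have at least $13 - 2\abs{\cT_1 \cap \cT_2}$ variables, whereas inclusion--exclusion on the $7$-element variable sets of $\cT_1$ and $\cT_2$, combined with the bound of $4$ (respectively $6$, via Claim \ref{claim:2-eqns-5-vars}) on the number of variables shared by $\cT_1$ and $\cT_2$ when their intersection has one (respectively two) equations, caps the number of variables in $\cU$ strictly below $13 - 2\abs{\cT_1 \cap \cT_2}$ --- a contradiction.
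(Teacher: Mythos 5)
Your proof is correct and follows essentially the same route as the paper: both note that every $3$-implication is $2$-full, apply Lemma \ref{lem:2-full-intersection} to conclude the intersection would be $2$-full, and rule out intersections of size $1$ (four variables in a single difference equality) and size $2$ (Claim \ref{claim:2-eqns-5-vars}). Your extra care in verifying the hypotheses via Claim \ref{claim:c-to-2}, and your alternative direct double-count, are fine but amount to the same underlying counting argument.
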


\begin{proof}
    Assume for contradiction that we have two $3$-implications which are not disjoint; each $3$-implication is $2$-full, so by Lemma \ref{lem:2-full-intersection} their intersection must be $2$-full as well. But their intersection has size either $1$ or $2$; a set of size $1$ cannot be $2$-full because a single difference equality contains four variables, and a set of size $2$ cannot be $2$-full by Claim \ref{claim:2-eqns-5-vars}. 
\end{proof}

Finally, we need one claim regarding $4$-implications.

\begin{claim} \label{claim:4-impls-overlap}
    If two $4$-implications intersect, then their intersection is a $3$-implication. 
\end{claim}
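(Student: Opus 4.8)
The plan is to reduce the claim to the structure theory of $2$-full collections developed in Section~\ref{sec:observations}. The first thing to check is that every $4$-implication is $2$-full. If $\cT = \{(*_1), (*_2), (*_3), (*_4)\} \subseteq \cS$ is a $4$-implication, then all four of its equations contain $x_i$, so $x_i$ appears at least four times among $(*_1), \ldots, (*_4)$ and the difference equality $(*)$ they produce. By Claim~\ref{claim:c-to-2}, $\cT$ is $2$-good, so Lemma~\ref{lem:2-good-min-impl}\ref{item:variable-counts} applies: either $\cT$ has $2 \cdot 4 + 2$ variables, each appearing exactly twice --- impossible, since then $x_i$ would appear only twice --- or $\cT$ has $2 \cdot 4 + 1 = 9$ variables, one appearing four times (necessarily $x_i$) and the rest twice. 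Hence $\cT$ is $2$-full.

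Now let $\cT_1$ and $\cT_2$ be two distinct $4$-implications that intersect. Their union lies in $\cS$, so it is linearly independent; and since $\abs{\cT_1 \cup \cT_2} \leq 8 < 1/(2 - c)$, Claim~\ref{claim:c-to-2} shows $\cT_1 \cup \cT_2$ is $2$-good. As $\cT_1$ and $\cT_2$ are both $2$-full, Lemma~\ref{lem:2-full-intersection} gives that $\cT_1 \cap \cT_2$ is $2$-full. Because $\cT_1 \neq \cT_2$ and both have size $4$, the intersection has size at most $3$; and it cannot have size $1$ (a single difference equality has four variables, not the three required of a $2$-full set of size $1$) or size $2$ (by Claim~\ref{claim:2-eqns-5-vars} such a set has at least six variables, not five). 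So $\abs{\cT_1 \cap \cT_2} = 3$.

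Finally, I would promote ``$2$-full of size $3$'' to ``$3$-implication'' using Lemma~\ref{lem:box-subbox}. Reindex so that $\cT_1 = \{(*_1), (*_2), (*_3), (*_4)\}$ with $\cT_1 \cap \cT_2 = \{(*_1), (*_2), (*_3)\}$, and let $\xi_1, \ldots, \xi_4 \in \{\pm 1\}$ be the signs for which $\cT_1$ produces its difference equality (which exist by Lemma~\ref{lem:2-good-min-impl}\ref{item:signs-pm1}, since $\cT_1$ is $2$-good and $2$-full). Applying Lemma~\ref{lem:box-subbox} to the $2$-full subset $\{(*_1), (*_2), (*_3)\}$ of $\cT_1$ shows that the equation $(*')$ with content $\xi_1{*_1} + \xi_2{*_2} + \xi_3{*_3}$ is a difference equality. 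Since $\{(*_1), (*_2), (*_3)\}$ is independent (being a subset of $\cS$) and implies $(*')$ with all three coefficients equal to $\pm 1 \neq 0$, it minimally implies $(*')$; being a size-$3$ subset of $\cS$ forming a minimal implication, it is a $3$-implication, as desired. I do not anticipate a genuine obstacle: the work is entirely in verifying the hypotheses of Lemmas~\ref{lem:2-full-intersection} and~\ref{lem:box-subbox} --- namely the $2$-goodness of the relevant small subsets of $\cS$ via Claim~\ref{claim:c-to-2}, and the $2$-fullness of a $4$-implication --- after which the conclusion follows from the box-subbox lemma with no further computation.
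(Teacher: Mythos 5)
Your proposal is correct and follows essentially the same route as the paper: show every $4$-implication is $2$-full, apply Lemma \ref{lem:2-full-intersection} to force the intersection to be $2$-full of size $3$ (ruling out sizes $1$ and $2$ via the four-variable count and Claim \ref{claim:2-eqns-5-vars}), and then use Lemma \ref{lem:box-subbox} to see that the intersection is itself a minimal implication, hence a $3$-implication. Your write-up simply spells out the $2$-goodness and sign details that the paper leaves implicit.
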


\begin{figure}[ht]
    \begin{tikzpicture}
        \node [fill = MediumPurple3!10, draw = MediumPurple3!30, align = center] at (-3, -1) {$x_i - x_1 - x_2 + x_3 = 0$ \\ [5pt] $x_i + x_1 - x_4 - x_5 = 0$ \\ [5pt] $x_i + x_3 - x_4 - x_6 = 0$};
        \node [fill = MediumPurple3!10, draw = MediumPurple3!30] at (2, -1) {$x_i - x_2 - x_5 + x_6 = 0$};
        \node at (-3, 0.5) {$x_i - x_2 - x_7 + x_8 = 0$};
        \node at (-3, -2.5) {$x_i + x_6 - x_9 - x_{10} = 0$};
        \node [fill = DarkSlateGray3!10, draw = DarkSlateGray3!30] at (2, 0) {$x_5 - x_6 - x_7 + x_8 = 0$};
        \node [fill = Honeydew3!15, draw = Honeydew3!45] at (2, -2) {$x_2 + x_5 - x_9 - x_{10} = 0$};
        \filldraw [DarkSlateGray3, fill opacity = 0.1, draw opacity = 0.3] (-5, -2) rectangle (-1, 1);
        \filldraw [Honeydew3, fill opacity = 0.15, draw opacity = 0.45] (-5, -3) rectangle (-1, 0);
        \node [font = {\Huge}, MediumPurple3!50] at (-0.5, -1) {$\rightsquigarrow$};
        \node [font = {\Huge}, DarkSlateGray3!50] at (-0.5, 0) {$\rightsquigarrow$};
        \node [font = {\Huge}, Honeydew3!75] at (-0.5, -2) {$\rightsquigarrow$};
    \end{tikzpicture}
    \caption{Two intersecting $4$-implications (shown in blue and green on the left, with the equations they produce on the right) and the $3$-implication formed by their intersection (shown in purple, with the equation it produces on the right).}
\end{figure}
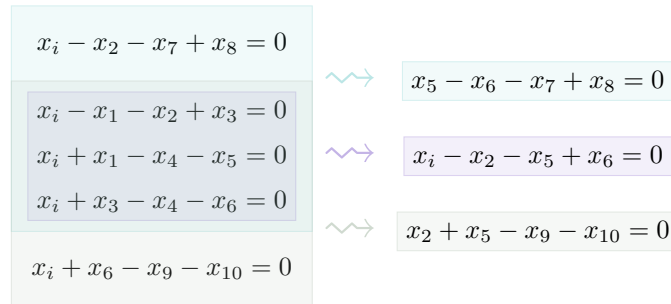

\begin{proof}
    First, $x_i$ appears in all four equations of a $4$-implication, so any $4$-implication is $2$-full. So if two $4$-implications intersect, then by Lemma \ref{lem:2-full-intersection}, their intersection is $2$-full as well. This means their intersection cannot have size $1$ or $2$, so it must have size $3$. Furthermore, Lemma \ref{lem:box-subbox} means that this intersection itself forms a minimal implication, so it must be a $3$-implication. 
\end{proof}

\subsection{Proof of Lemmas \ref{lem:baseline-technical} and \ref{lem:contained-in-s}} \label{subsec:proof-impls-xi}

We are now ready to deduce Lemmas \ref{lem:baseline-technical} and \ref{lem:contained-in-s}. First we prove a stronger version of Lemma \ref{lem:baseline-technical} which gives some information about near-equality cases. 

\begin{claim} \label{claim:baseline-technical-extra}
    The number of pairs $(i, \bullet)$ that $\cS$ certifies is at most $2s$. Furthermore, if this number is at least $2(1 - \eps)s$, then $\cS$ contains at most $2\eps s$ $3$-implications, and every equation in $\cS$ is difference-aligned with at most $8\eps s$ others. 
\end{claim}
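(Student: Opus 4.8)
The plan is to prove all three assertions with one counting argument that, for each difference equality on $x_i$ implied by $\cS$, charges the at most two pairs $(i,\bullet)$ it can certify. By Lemma~\ref{lem:impls-are-small} every difference equality on $x_i$ implied by $\cS$ is produced either by a single equation of $\cS$ (a $1$-implication) or by a $3$-implication, and by Claim~\ref{claim:3-impls-disjoint} the $3$-implications of $\cS$ are pairwise disjoint. So I would list the $3$-implications $\cT_1,\dots,\cT_m$ of $\cS$, put $U=\cT_1\cup\cdots\cup\cT_m$ (so $|U|=3m$), and observe that every certified pair $(i,\bullet)$ is certified either by some $\cT_a$ together with the equation it produces, or by a single equation of $\cS\setminus U$. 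A single difference equality involves $x_i$ in only two differences, hence certifies at most two pairs $(i,\bullet)$; and Claim~\ref{claim:3-impls-cert-5} says $\cT_a$ together with its produced equation certifies at most five. So the total is at most $5m+2(|\cS|-3m)=2|\cS|-m\le 2s$ — this is the first assertion (Lemma~\ref{lem:baseline-technical}), and the same computation in fact gives the sharper bound $|P|\le 2s-m$, where $P$ is the set of certified pairs $(i,\bullet)$. In particular $|P|\ge 2(1-\eps)s$ forces $m\le 2\eps s$, the second assertion.

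For the third assertion I would fix $e\in\cS$ and let $e_1,\dots,e_r$ be the equations of $\cS$ difference-aligned with $e$. The point is that $e$ involves $x_i$ in only two differences, say $x_i-a$ and $x_i-b$; and since each $e_j$ shares with $e$ exactly one variable besides $x_i$ and is difference-aligned, that shared variable is $a$ or $b$ and $e_j$ has the form $\{x_i-a=u_j-w_j\}$ (or the same with $b$) for a variable $u_j$ not occurring in $e$. Thus $e_j$ certifies at most the pair $(i,a)$ (resp. $(i,b)$) coming from the shared difference and one further pair $(i,u_j)$, so the $r+1$ equations $\{e,e_1,\dots,e_r\}$, viewed as $1$-implications, certify only a subset of $\{(i,a),(i,b)\}\cup\{(i,u_1),\dots,(i,u_r)\}$ — at most $r+2$ pairs $(i,\bullet)$, rather than the $2(r+1)$ one would get generically. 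Feeding this saving into the count above: if $q$ of $\{e,e_1,\dots,e_r\}$ lie outside $U$ — so $q\ge r+1-3m$ — then these $q$ loose equations certify at most $q+2$ pairs in total, the remaining $|\cS|-3m-q$ loose equations certify at most two each, and the $3$-implications certify at most $5m$, so $|P|\le 5m+(q+2)+2(|\cS|-3m-q)=2|\cS|-m-q+2\le 2s+2m-r+1$ (and if instead $r+1<3m$ then already $r<3m$). Combining with $m\le 2\eps s$ and $|P|\ge 2(1-\eps)s$ gives $r\le 6\eps s+O(1)$; a slightly more careful version of this bookkeeping yields the stated bound $8\eps s$.

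The part requiring care is establishing the structural shape of the difference-aligned equations $e_j$ — that $e_j$ shares with the rest of the picture only $x_i$ and one of $a,b$, and that no extra difference equality on $x_i$ is implied by combinations of $e$ and the $e_j$'s beyond those handled by the $3$-implication term. This is where I would invoke the validity and collinearity-freeness of $\cS$ (Claim~\ref{claim:2-eqns-5-vars}) and the structure of $2$-good minimal implications (Lemma~\ref{lem:2-good-min-impl}, available via Claim~\ref{claim:c-to-2}, since the small subcollections involved are $2$-good). The main obstacle is not any single hard step but the bookkeeping in the final count: a $3$-implication may use some of the $e_j$'s, so one must keep the ``$\le 5$ per $3$-implication'' accounting and the ``$\le q+2$ for the loose difference-aligned block'' accounting over disjoint sets of equations (loose versus inside $U$) and check that every certified pair is traced to exactly one of these buckets. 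Getting the absolute constant down to $8\eps s$ is a minor additional optimization with no conceptual content.
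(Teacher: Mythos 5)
Your proposal is correct and follows essentially the same route as the paper: partition $\cS$ into its (disjoint) $3$-implications and lone equations, use the $5$-versus-$6$ count per $3$-implication to get the bound $2s-m$ and hence $m\le 2\eps s$, and then observe that the cluster of lone equations difference-aligned with a fixed equation certifies only about one new pair each, which caps the alignment degree. The only (minor) difference is the final bookkeeping: your version gives $r\le 2\eps s+2m+1\le 6\eps s+1$, which yields the stated $8\eps s$ once $\eps s\ge 1/2$ (true in every application, where $s\ge 1/\eps$), whereas the paper charges a loss of one directly for each lone difference-aligned equation and gets $8\eps s$ with no side condition.
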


\begin{proof}
    Imagine that we write down all equations in $\cS$ (of which there are at most $s$) and draw a \emph{box} around every $3$-implication, as well as around every individual equation not included in any $3$-implication. These boxes partition $\cS$ (since $3$-implications are disjoint by Claim \ref{claim:3-impls-disjoint}), and every pair $(i, \bullet)$ certified by $\cS$ is certified by some box. 

    \begin{figure}[ht]
        \begin{tikzpicture}
            \node at (0, 0) {$x_i - x_1 - x_2 + x_3 = 0$};
            \node at (0, -1) {$x_i + x_1 - x_4 - x_5 = 0$};
            \node at (0, -2) {$x_i + x_3 - x_4 - x_6 = 0$};
            \node at (0, -3) {$x_i - x_7 - x_8 + x_9 = 0$};
            \node at (0, -4) {$x_i + x_7 - x_{10} - x_{11} = 0$};
            \node at (0, -5) {$x_i - x_8 - x_{11} + x_{12} = 0$};
            \node at (0, -6) {$x_i + x_{13} - x_{14} - x_{15} = 0$};
            \node at (0, -7) {$x_i + x_{13} - x_{16} - x_{17} = 0$};
            \node at (0, -8) {$x_i - x_{16} - x_{18} + x_{19} = 0$};
            \filldraw [DarkSlateGray3, draw opacity = 0.3, fill opacity = 0.1] (-2.3, -2.3) rectangle (2.3, 0.3);
            \filldraw [DarkSlateGray3, draw opacity = 0.3, fill opacity = 0.1] (-2.3, -5.3) rectangle (2.3, -2.7);
            \filldraw [DarkSlateGray3, draw opacity = 0.3, fill opacity = 0.1] (-2.3, -6.3) rectangle (2.3, -5.7);
            \filldraw [DarkSlateGray3, draw opacity = 0.3, fill opacity = 0.1] (-2.3, -7.3) rectangle (2.3, -6.7);
            \filldraw [DarkSlateGray3, draw opacity = 0.3, fill opacity = 0.1] (-2.3, -8.3) rectangle (2.3, -7.7);
            \foreach \i\l in {-1/{$1$, $2$, $4$, $5$, $6$}, -4/{$7$, $8$, $10$, $11$, $12$}, -6/{$14$, $15$}, -7/{$16$, $17$}, -8/{$16$, $18$}} {
                \node [MediumPurple3!50, font = {\Huge}] at (3, \i) {$\rightsquigarrow$};
                \node [MediumPurple3, anchor = west] at (3.5, \i) {\l};
            }
        \end{tikzpicture}
        \caption{A possible configuration of boxes in the proof of Claim \ref{claim:baseline-technical-extra}, where the numbers to the right of each box are the indices $j$ for which that box certifies $(i, j)$. For example, for the second box, the equations already present in the box certify $(i, 7)$, $(i, 8)$, $(i, 10)$, and $(i, 11)$; and the box implies $x_i + x_9 - x_{10} - x_{12} = 0$, so it also certifies $(i, 12)$.}
    \end{figure}
    A box of size $1$ (i.e., a lone equation) certifies exactly two pairs $(i, \bullet)$, corresponding to the two variables which appear with opposite sign as $x_i$ in that equation. Meanwhile, a box of size $3$ (i.e., a $3$-implication) certifies at most five pairs $(i, \bullet)$ by Claim \ref{claim:3-impls-cert-5}. 
    
    First, for every box, this means the number of pairs $(i, \bullet)$ it certifies is at most twice its size; so the total number of pairs $(i, \bullet)$ certified by $\cS$ is at most $2s$ (the total size of all boxes). 

    To prove the rest of the claim, suppose that $\cS$ certifies at least $2(1 - \eps)s$ pairs $(i, \bullet)$. First, we can get a loss of one in the above bound for every $3$-implication in $\cS$, since a $3$-implication certifies at most $5$ (rather than $6 = 2 \cdot 3$) pairs. This means $\cS$ must contain at most $2\eps s$ $3$-implications, since we cannot have a loss of more than $2\eps s$. 
    
    Now assume for contradiction that $\cS$ contains an equation $(*)$ which is difference-aligned with more than $8\eps s$ others. First, at most $6\eps s$ equations in $\cS$ are in $3$-implications, so $(*)$ is difference-aligned with more than $2\eps s$ lone equations. But we get a loss of one in the above bound for each such equation (since if $(*)$ is difference-aligned with $(*')$, then one of the pairs $(i, \bullet)$ certified by $(*')$ was already certified by $(*)$). Again this means we get a loss of more than $2\eps s$, which is a contradiction. 
\end{proof}

This completes the proof of Lemma \ref{lem:baseline-technical}. For Lemma \ref{lem:contained-in-s}, we will use Claim \ref{claim:baseline-technical-extra} to show that $\cS$ contains very few $4$-implications or difference-aligned $2$-implications. So the main contribution will come from sum-aligned $2$-implications; and the assumption that $\cS$ does not contain a large star means that the number of sum-aligned $2$-implications is substantially less than $\binom{s}{2}$, as captured by the following claim. 

\begin{claim} \label{claim:sum-aligned}
    If $\cS$ does not imply a star of size at least $2(1 - 16\eps)s$, then every equation in $\cS$ is sum-aligned with at most $(1 - 16\eps)s$ others. 
\end{claim}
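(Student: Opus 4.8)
The plan is to show that the equations of $\cS$ that are sum-aligned with a fixed equation $(*)$ must, together with $(*)$, form a single star, whose size is then directly bounded by the no-large-star hypothesis.

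First I would recall the description of sum-aligned pairs from Subsection \ref{subsec:t-impls}. Writing each equation of $\cS$ with $x_i$ having coefficient $+1$, every equation of $\cS$ takes the form $x_i + x_a = x_b + x_c$; call $x_a$ the \emph{$x_i$-partner} of that equation. (This is well-defined, and $x_a$, $x_b$, $x_c$ are distinct from each other and from $x_i$, since $\cS$ is $c$-good and so every difference equality it implies has four distinct variables.) The key point is that two distinct equations of $\cS$ are sum-aligned exactly when they have the same $x_i$-partner: a pair of equations containing $x_i$ that share exactly one other variable is sum-aligned when that shared variable is the $x_i$-partner of both (producing the difference equality $x_b + x_c = x_{b'} + x_{c'}$) and difference-aligned otherwise, while two distinct equations sharing the same $x_i$-partner cannot share any further variable without forcing two variables to be equal, contradicting validity.

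Then I would fix an equation $(*): x_i + x_a = x_{b_0} + x_{c_0}$ of $\cS$ and consider the $m$ equations of $\cS$ sum-aligned with it; by the previous paragraph these can be written as $x_i + x_a = x_{b_j} + x_{c_j}$ for $j = 1, \ldots, m$, all with the same $x_i$-partner $x_a$. For distinct $j, j' \in \{0, \ldots, m\}$ the pairs $\{b_j, c_j\}$ and $\{b_{j'}, c_{j'}\}$ must be disjoint (otherwise $\cS$ implies that two variables are equal, since these equations are distinct), and none of $b_0, c_0, \ldots, b_m, c_m$ equals $i$ or $a$. Hence the $m+2$ pairs $\{x_i, x_a\}, \{x_{b_0}, x_{c_0}\}, \ldots, \{x_{b_m}, x_{c_m}\}$ together involve $2(m+2)$ distinct variables, so $\cS$ implies the star
\[ x_i + x_a = x_{b_0} + x_{c_0} = x_{b_1} + x_{c_1} = \cdots = x_{b_m} + x_{c_m} \]
of size $2(m+2)$. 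Since $\cS$ implies no star of size at least $2(1 - 16\eps)s$, this forces $2(m+2) < 2(1 - 16\eps)s$, and in particular $m \leq (1 - 16\eps)s$, which is exactly the desired bound.

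I do not anticipate a genuine obstacle here: the statement is a bookkeeping consequence of the definition of sum-aligned together with the case analysis already carried out in Subsection \ref{subsec:t-impls}. The only thing requiring a little care is checking that the $2(m+2)$ variables appearing in the claimed chain of equalities are genuinely distinct, so that it really is a star of size $2(m+2)$ in the sense of Example \ref{ex:star} — and this is precisely what validity of $\cS$ provides.
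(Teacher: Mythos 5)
Your proof is correct and is essentially the paper's own argument: fixing an equation and collecting everything sum-aligned with it is the same as the paper's partition of $\cS$ by the variable appearing with the same sign as $x_i$ (your ``$x_i$-partner''), and in both cases the part of size $m+1$ is observed to form a star of size $2(m+2)$, which the no-large-star hypothesis bounds. Your disjointness check via validity is exactly the paper's parenthetical appeal to the fact that two equations cannot share more than two variables.
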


\begin{proof}
    Suppose we partition the difference equalities in $\cS$ based on which variable appears with the same sign as $x_i$, so that an equation is sum-aligned with precisely the other equations in its part. Then a part consisting of $p$ equations forms a star of size $2(p + 1)$. (For this, we are using the fact that two equations cannot share more than two variables, which ensures that the other $2p$ variables appearing in equations of the part are all distinct.)

    \begin{figure}[ht]
        \begin{tikzpicture}
            \node at (0, 0) {$x_i + x_1 - x_2 - x_3 = 0$};
            \node at (0, -1) {$x_i + x_1 - x_4 - x_5 = 0$};
            \node at (0, -2) {$x_i + x_1 - x_6 - x_7 = 0$};
            \node at (0, -3) {$x_i - x_4 + x_8 - x_9 = 0$};
            \node at (0, -4) {$x_i + x_8 - x_{10} - x_{11} = 0$};
            \filldraw [DarkSlateGray3, draw opacity = 0.3, fill opacity = 0.1] (-2, -2.25) rectangle (2, 0.25);
            \filldraw [MediumPurple3, draw opacity = 0.3, fill opacity = 0.1] (-2, -4.25) rectangle (2, -2.75);
            \node [sdot, label = left: {$x_i$}] (i) at (4.5, -2) {};
            \node [sdot, label = right: {$x_4$}] (4) at (7, -2) {};
            \node [sdot, label = above: {$x_1$}] (1) at (7.5, -0.25) {};
            \node [sdot, label = left: {$x_2$}] (2) at (4, -1) {};
            \node [sdot, label = right: {$x_3$}] (3) at ($(i) + (1) - (2)$) {};
            \node [sdot, label = left: {$x_5$}] (5) at ($(i) + (1) - (4)$) {};
            \node [sdot, label = above: {$x_6$}] (6) at (6.2, 0) {};
            \node [sdot, label = below: {$x_7$}] (7) at ($(i) + (1) - (6)$) {};
            \node [sdot, label = below: {$x_8$}] (8) at (7, -4) {};
            \node [sdot, label = below: {$x_9$}] (9) at ($(i) + (8) - (4)$) {};
            \node [sdot, label = left: {$x_{10}$}] (10) at (4, -3.25) {};
            \node [sdot, label = right: {$x_{11}$}] (11) at ($(i) + (8) - (10)$) {};
            \foreach \i\j in {i/1, 2/3, 4/5, 6/7} {
                \draw [DarkSlateGray3!50, very thick] (\i) -- (\j);
            }
            \foreach \i\j in {i/8, 4/9, 10/11} {
                \draw [MediumPurple3!50, very thick] (\i) -- (\j);
            }
        \end{tikzpicture}
        \caption{A partition as in the proof of Claim \ref{claim:sum-aligned}, with two stars.}
    \end{figure}
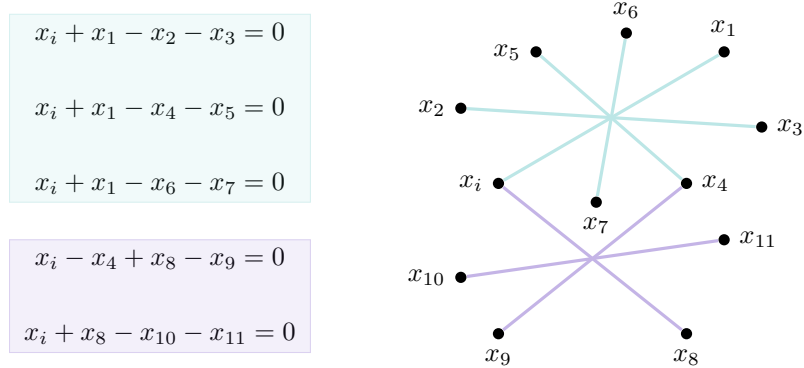

    Since every star has size at most $2(1 - 16\eps)s$, this means every part of the partition contains at most $(1 - 16\eps)s$ equations, as desired. 
\end{proof}

\begin{proof}[Proof of Lemma \ref{lem:contained-in-s}]
    If $\cS$ certifies a pair $(i', j') \in [i - 1]^2$, then this pair must be certified by the product of some $2$-implication or $4$-implication. (This is because by definition, $\cS$ certifies $(i', j')$ if it implies some difference equality where $i'$ is the largest index and $x_{i'}$ and $x_{j'}$ appear with opposite sign; and any such difference equality must be the product of a $2$-implication or $4$-implication.) Furthermore, a single difference equality certifies two pairs $(i', j')$. So in order to prove Lemma \ref{lem:contained-in-s}, it suffices to bound the number of $2$-implications and $4$-implications (since each produces only one difference equality). 

    First, Claim \ref{claim:sum-aligned} means that the number of sum-aligned $2$-implications is at most $(1 - 16\eps)s^2/2$ (since once we have chosen the first equation, there are at most $(1 - 16\eps)s$ choices for the second). Similarly, Claim \ref{claim:baseline-technical-extra} means that every equation in $\cS$ is difference-aligned with at most $8\eps s$ others, so the number of difference-aligned $2$-implications is at most $8\eps s^2/2 = 4\eps s^2$. 

    Next, the number of $4$-implications which are disjoint from all other $4$-implications is at most $s/4$. 

    Finally, to bound the number of $4$-implications which intersect some other $4$-implication, Claim \ref{claim:4-impls-overlap} means that any such $4$-implication consists of a $3$-implication together with one extra equation. By Claim \ref{claim:baseline-technical-extra} there are at most $2\eps s$ $3$-implications, and there are at most $s$ choices for the extra equation, so there are at most $2\eps s^2$ such $4$-implications. 

    Putting these bounds together, the total number of $2$-implications and $4$-implications in $\cS$ is at most \[\frac{(1 - 16\eps)s^2}{2} + 4\eps s^2 + \frac{s}{4} + 2\eps s^2 \leq \frac{(1 - 2\eps)s^2}{2}\] (using the assumption that $\eps$ is small and $s \geq 1/\eps$); each corresponds to two pairs $(i', j') \in [i - 1]^2$ that $\cS$ certifies, so the total number of such pairs is at most $(1 - 2\eps)s^2$. 
\end{proof}

\section{Extra certifications from a small set} \label{sec:intersect-r}

In this section, we prove Lemma \ref{lem:intersect-r}, which states that if we start with a large collection $\cS$ of difference equalities containing $x_i$, then adding a small collection of new difference equalities $\cR$ (satisfying certain conditions) cannot cause too many additional pairs to get certified. 

Imagine that we write down all equations in $\cR$ and $\cS$, and draw a \emph{box} around every minimal implication which intersects $\cR$ (in particular, we draw a box of size $1$ around every equation in $\cR$); then any pair certified by $\cR \cup \cS$ but not $\cS$ alone is certified by some box. We are going to show that the union of all boxes cannot be much bigger than $\cR$, as quantified by the following claim. 

\begin{claim} \label{claim:intersect-r-boxes}
    The union of all boxes has size at most $16\abs{\cR}$. 
\end{claim}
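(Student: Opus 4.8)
The plan is to analyze the boxes one at a time and then control their overlap. Fix a box, i.e.\ a minimal implication $\cT \subseteq \cR \cup \cS$ with $\cT \cap \cR \neq \varnothing$; write $\cT = (\cT \cap \cR) \sqcup (\cT \cap \cS)$, set $t_\cR = \sabs{\cT \cap \cR} \geq 1$, $t_\cS = \sabs{\cT \cap \cS}$, $t = t_\cR + t_\cS$, and let $(*)$ be the difference equality $\cT$ produces. The first step is to show that $(*)$ does not contain $x_i$: otherwise, by hypothesis ${*} = \sum_{(*_j) \in \cS} d_j {*_j}$ for some $d_j$, and comparing this with the minimal implication ${*} = \sum_{(*_j) \in \cT} c_j {*_j}$ (all $c_j \neq 0$) yields a nontrivial linear dependence among $\cR \cup \cS$ --- the equations of $\cT \cap \cR$ get nonzero coefficients and do not occur in the $\cS$-sum --- contradicting linear independence. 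Since then $x_i$ occurs in exactly the $t_\cS$ equations of $\cT \cap \cS$ among $(*_1), \dots, (*_t), (*)$ and must cancel, we get $t_\cS \in \{0\} \cup \{2, 3, \dots\}$.

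The second step is a slot count. The $t + 1$ difference equalities $(*_1), \dots, (*_t), (*)$ have $4(t+1)$ variable slots, $x_i$ uses $t_\cS$ of them, and every other variable of $\cT$ must occur at least twice among them; so if $\cT$ contains $v$ variables then $t_\cS + 2(v - 1) \leq 4(t + 1)$, giving $v \leq 2t + 3 - t_\cS / 2$. Combined with $v \geq ct + 1$ (as $\cT$ is $c$-good) this rearranges to $t_\cS \leq 2(2 - c)t + 4$, hence, since $2 - c$ is tiny, $t_\cS \leq t_\cR + 8$ --- every box is ``mostly in $\cR$.'' For boxes whose size is below $1/(2 - c)$, Claim~\ref{claim:c-to-2} makes $\cT$ $2$-good, so by Lemma~\ref{lem:2-good-min-impl}\ref{item:variable-counts} the variable $x_i$ occurs either twice or four times, i.e.\ $t_\cS \in \{0, 2, 4\}$, and the interaction of $2$-full subboxes is governed by Lemmas~\ref{lem:2-full-intersection} and~\ref{lem:box-subbox}. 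I also plan to use that a box is recoverable from the equation it produces: projecting modulo $\operatorname{span}(\cS)$ recovers $\cT \cap \cR$ and its coefficients, and then $\cT \cap \cS$ is the $\cS$-support of ${*}$ minus its $\cR$-part.

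To deal with overlaps I would induct on $\sabs{\cR}$, the base case $\cR = \varnothing$ being trivial since there are no boxes. Remove one equation $(*_r) \in \cR$, let $\cR_0 = \cR \setminus \{(*_r)\}$, and let $\cU_0$ be the union of the boxes for the pair $(\cR_0, \cS)$, so $\sabs{\cU_0} \leq 16\sabs{\cR_0}$ by induction and $\cR \cup \cU_0 \subseteq \cU$ (each equation of $\cR$ is its own size-$1$ box). The key observation is that any box $\cT$ containing an equation of $\cS \setminus \cU_0$ must contain $(*_r)$: if it did not, then $\cT \subseteq \cR_0 \cup \cS$ and $\cT$ still meets $\cR_0$, so $\cT$ would be a box for $(\cR_0, \cS)$ and hence contained in $\cU_0$. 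Therefore $\cU \setminus \cU_0$ consists of $(*_r)$ together with the new $\cS$-equations that appear in boxes through $(*_r)$; it remains to bound the latter by $15$, after which $\sabs{\cU} \leq \sabs{\cU_0} + 16 \leq 16\sabs{\cR}$.

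I expect the main obstacle to be exactly this final count: in principle many, heavily overlapping, minimal implications can pass through the single equation $(*_r)$, and one must show they collectively introduce only a bounded number of $\cS$-equations outside $\cU_0$. The plan here is to split into boxes $\cT$ with $\cT \cap \cR = \{(*_r)\}$ --- where $t_\cR = 1$ forces $t_\cS \leq 4$, and the $2$-good structure pins down the possible shapes so that two such boxes must overlap substantially (via Lemma~\ref{lem:2-full-intersection}) --- and boxes using some equation of $\cR_0$ --- where the fact that none of their $\cS$-equations lies in $\cU_0$, combined with the ``mostly in $\cR$'' bound and Lemma~\ref{lem:box-subbox}, should force their $\cS$-parts to be essentially determined and few. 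Carrying out this case analysis carefully, and thereby confirming the constant $16$, is the technical heart of the argument.
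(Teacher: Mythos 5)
Your preliminary steps are fine (the produced equality of a box cannot contain $x_i$, and the per-box slot count giving $t_\cS \le 2(2-c)t+4$ is essentially the paper's Lemma \ref{lem:indiv-box}), but the write-up stops exactly where the content of the claim lies, and the inductive route you sketch has a genuine gap. Your induction needs the local statement that removing an \emph{arbitrary} $(*_r)\in\cR$ introduces at most $15$ new $\cS$-equations via boxes through $(*_r)$; this follows neither from the inductive hypothesis nor from the global bound, and nothing in your sketch establishes it. Concretely: a box through $(*_r)$ may also use many equations of $\cR_0$, so your bound $t_\cS\le t_\cR+8$ permits far more than $15$ $\cS$-equations in a single such box, and there is no reason these lie in $\cU_0$ (they need not occur in any minimal implication avoiding $(*_r)$); the remark that Lemma \ref{lem:box-subbox} ``should force their $\cS$-parts to be essentially determined and few'' is a hope, not an argument. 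Even for boxes with $\cT\cap\cR=\{(*_r)\}$, where $t_\cS\le 4$, you must bound the number of such boxes with essentially disjoint $\cS$-parts; Lemma \ref{lem:2-full-intersection} only forces substantial overlap when both boxes are $2$-full, and in the non-$2$-full case ($2t+2$ variables, $x_i$ appearing twice) two boxes through $(*_r)$ can share nothing but $(*_r)$ itself. The real reason only boundedly many such boxes can coexist is that their \emph{union} would otherwise be $c$-heavy (too many equations on too few variables), so one must run a variables-versus-equations count over the whole union of boxes; no pairwise-intersection argument substitutes for this.

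That global count is precisely what the paper's proof supplies and what your proposal is missing: it processes the boxes one at a time while maintaining the potential inequality $v^*-2t^*\le 2r^*-s^*/4$, using Lemma \ref{lem:indiv-box} for a box disjoint from the previously processed ones and Lemma \ref{lem:overlap-box} for an overlapping box (the latter includes a delicate near-equality analysis in the case $r=0$, $s=1$, resting on Claim \ref{claim:2-eqns-5-vars} and the hypothesis that $x_i$-equalities are implied by $\cS$ alone), and only at the end applies $c$-lightness of the entire union via $v^*\ge ct^*$ to conclude $s^*\le 15r^*$ and hence $t^*\le 16\abs{\cR}$. The bound is thus amortized over all of $\cR$ at once; an honest proof of your per-removal bound of $15$ would have to redo this accounting for the boxes through $(*_r)$ while also controlling how their variables overlap $\cU_0$, which is not easier than the original claim. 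As it stands, the proposal proves the easy local facts but not the statement.
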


Claim \ref{claim:intersect-r-boxes} directly implies Lemma \ref{lem:intersect-r} --- every difference equality contains only four variables, so Claim \ref{claim:intersect-r-boxes} means the union of all boxes contains at most $64\abs{\cR}$ variables, and therefore certifies at most \[\frac{1}{2} \cdot (64\abs{\cR})^2 = 2048\abs{\cR}^2\] pairs (since if the union of boxes certifies $(i', j')$, then $x_{i'}$ and $x_{j'}$ have to both appear in this union). 

\begin{figure}[ht]
    \begin{tikzpicture}[scale = 1.1]
        \filldraw [fill = DarkSlateGray3!10, draw = DarkSlateGray3!30] (-6, -4.25) rectangle (6, 4.25);
        \filldraw [fill = white, draw = DarkSlateGray3!30] (-1.6, -1.1) rectangle (1.6, 1.1);
        \filldraw [fill = MediumPurple3!10, draw = MediumPurple3!30] (-1.5, -1) rectangle (1.5, 1);
        \foreach \i in {-5, ..., 5} {
            \foreach \j in {1.5, 2.25, 3, 3.75, -1.5, -2.25, -3, -3.75} {
                \draw [DarkSlateGray3, decorate, decoration = {snake, amplitude = 1.5pt}] (\i - 0.28, \j) to (\i + 0.28, \j);
            }
        }
        \foreach \i in {-5, -4, -3, -2, 2, 3, 4, 5} {
            \foreach \j in {0.75, 0, -0.75} {
                \draw [DarkSlateGray3, decorate, decoration = {snake, amplitude = 1.5pt}] (\i - 0.28, \j) to (\i + 0.28, \j);
            }
        }
        \foreach \i in {-1, ..., 1} {
            \foreach \j in {0.75, 0, -0.75} {
                \draw [MediumPurple3, decorate, decoration = {snake, amplitude = 1.5pt}] (\i - 0.28, \j) to (\i + 0.28, \j);
                \filldraw [gray, fill opacity = 0.08, draw opacity = 0.3] (\i - 0.35, \j - 0.15) rectangle (\i + 0.35, \j + 0.15);
            }
        }
        \foreach \i\j\k\l in {-1.45/0.375/0.45/1.75, -0.45/0.5/1.45/2.75, 0.55/-0.25/2.45/1.75, -0.45/0.25/3.45/-1.75, -0.55/0.25/-2.45/-1.75} {
            \filldraw [gray, fill opacity = 0.08, draw opacity = 0.3] (\i, \j) rectangle (\k, \l);
        }
    \end{tikzpicture}
    \caption{A schematic of Claim \ref{claim:intersect-r-boxes}, where $\cS$ is in blue and $\cR$ in purple, squiggly lines represent equations, and boxes are in gray. Intuitively, we want to show that even though $\cS$ may be huge, only a small portion of it --- whose size is comparable to $\cR$ --- `interacts' with $\cR$ (by being in a box).}
\end{figure}
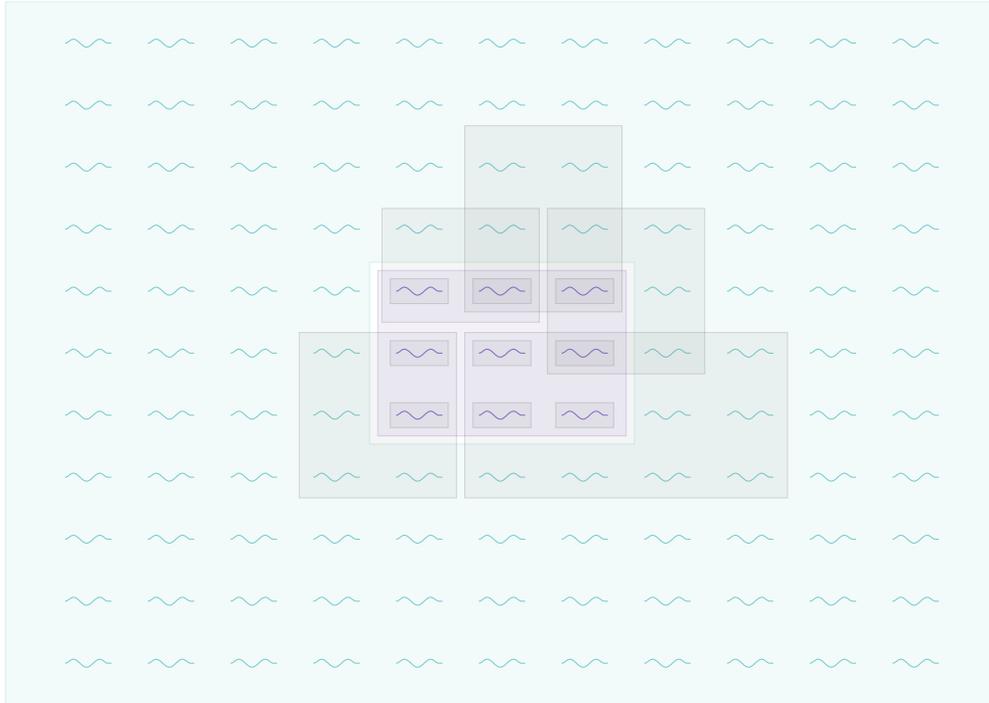

The main idea behind how we prove Claim \ref{claim:intersect-r-boxes} is that we imagine going through the boxes one by one and keeping track of the quantity $v^* - 2t^*$, where $v^*$ is the total number of variables we have seen (other than $x_i$) among the boxes processed so far, and $t^*$ is the total number of equations we have seen. Every time we process a new box, we will gain some number of new variables and equations. We will show that gaining equations in $\cR$ could potentially drive $v^* - 2t^*$ up, but gaining equations in $\cS$ has to drive it \emph{down}. But in the end, $v^* - 2t^*$ cannot be too negative compared to $t^*$ (the fact that $\cR \cup \cS$ is $c$-good means that $v^* \geq ct^*$, and $c$ is very close to $2$). This will mean that the fraction of our equations coming from $\cS$ cannot be too much bigger than the fraction coming from $\cR$, which will imply Claim \ref{claim:intersect-r-boxes}. 

To make this analysis work, we need two lemmas that describe how $v^* - 2t^*$ changes when we add boxes. 

\begin{lemma} \label{lem:indiv-box}
    Let $\cT$ be a box containing $t$ equations, of which $r$ equations are from $\cR$ and $s$ are from $\cS$, and containing $v$ variables which are not $x_i$. Then we have $v - 2t \leq 2r - s/2$. 
\end{lemma}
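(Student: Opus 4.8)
The plan is to prove Lemma~\ref{lem:indiv-box} by a single incidence count inside the box $\cT$, using only two facts: that $\cT$ is $c$-good (hence valid and collinearity-free) and that $\cT$ is a minimal implication.

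Write $\cT = \{(*_1), \dots, (*_t)\}$ with $t = r + s$, and let $(*)$ be the difference equality produced by $\cT$. First I would record that, since $\cT$ is a subset of the $c$-good collection $\cR \cup \cS$, every difference equality implied by $\cT$ --- in particular each equation $(*_j)$ and the equation $(*)$ --- involves exactly four distinct variables (one on two variables would contradict validity, one on three would contradict collinearity-freeness). Hence the total number of (variable, equation) incidences across the $t+1$ equations $(*_1), \dots, (*_t), (*)$ is exactly $4(t+1)$.

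Next I would use minimality of $\cT$: writing ${*} = c_1{*_1} + \cdots + c_t{*_t}$ with all $c_j \neq 0$, a variable occurring in exactly one of the $t+1$ equations above could not cancel in this identity, so every variable occurring in $\cT$ occurs in at least two of them. Now single out $x_i$: each of the $s$ equations from $\cS$ contains $x_i$, and none of the $r$ equations from $\cR$ does, so $x_i$ occurs in at least $s$ of the $t+1$ equations, while each of the other $v$ variables occurs in at least two. Comparing with the exact count gives $s + 2v \le 4(t+1)$, i.e.\ $v - 2t \le 2 - s/2$. Finally, a box meets $\cR$ by definition, so $r \ge 1$, whence $v - 2t \le 2 - s/2 \le 2r - s/2$, as claimed.

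I do not expect a genuine obstacle; the only care needed is in degenerate cases. For a size-$1$ box --- a lone equation of $\cR$, where the ``produced'' equation coincides with the equation itself --- one should check the incidence count still reads $4 \cdot 2$ with $v = 4$, $r = 1$, $s = 0$, so that $v - 2t = 2 = 2r - s/2$ (equality holds). The value $s = 1$, which by the ``at least two occurrences'' rule can arise only if $(*)$ itself contains $x_i$, is harmless, since the bound $v - 2t \le 2 - s/2$ was derived using only that $x_i$ occurs at least $s$ times. I would also confirm that it is precisely validity and collinearity-freeness that license ``four distinct variables per equation'', so that $4(t+1)$ is an exact incidence count and not merely an upper bound.
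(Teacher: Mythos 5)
Your proposal is correct and follows essentially the same incidence-counting argument as the paper: each of the $v$ non-$x_i$ variables appears at least twice among $(*_1), \ldots, (*_t), (*)$ by minimality, $x_i$ appears at least $s$ times, and comparing with the $4(t+1)$ variable slots gives $2v + s \leq 4(t+1)$, hence $v - 2t \leq 2 - s/2 \leq 2r - s/2$ since $r \geq 1$. The only (harmless) deviation is that the paper additionally invokes the hypothesis of Lemma \ref{lem:intersect-r} to note that $(*)$ cannot contain $x_i$, so that $x_i$ appears \emph{exactly} $s$ times, whereas you observe that the lower bound ``at least $s$'' already suffices for the inequality.
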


\begin{proof}
    Let $\cT = \{(*_1), \ldots, (*_t)\}$ and let $(*)$ be a difference equality that $\cT$ produces, so that \[{*} = c_1{*_1} + \cdots + c_t{*_t}\] for nonzero $c_1, \ldots, c_t \in \QQ$. Then each of our $v$ variables has to appear at least twice among $(*_1)$, \ldots, $(*_t)$, $(*)$, while $x_i$ appears exactly $s$ times. (Note that $(*)$ does not contain $x_i$ because we assumed that every difference equality containing $x_i$ implied by $\cR \cup \cS$ is in fact implied by $\cS$ alone, and since $(*)$ comes from a minimal implication which intersects $\cR$, it is not implied by $\cS$ alone.) Since each equation only has four slots for variables to appear, this means \[2v + s \leq 4(t + 1),\] which rearranges to \[v - 2t \leq 2 - \frac{s}{2} \leq 2r - \frac{s}{2}\] (we have $r \geq 1$ because every box intersects $\cR$). 
\end{proof}

\begin{lemma} \label{lem:overlap-box}
    Let $\cT$ be a box, and let $\cT' \subseteq \cT$ be nonempty. Suppose that $\cT$ contains $t$ equations not in $\cT'$, of which $r$ are from $\cR$ and $s$ are from $\cS$. Also suppose that $\cT$ contains $v$ variables which are not $x_i$ and are not contained in $\cT'$. Then we have $v - 2t \leq r/2 - s/4$. 
\end{lemma}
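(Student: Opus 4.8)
The plan is to mimic the slot count in the proof of Lemma \ref{lem:indiv-box}, but now localized to the part of the box lying outside $\cT'$. Write $\cT$ as a minimal implication producing a difference equality $(*)$; exactly as in Lemma \ref{lem:indiv-box}, the second hypothesis of Lemma \ref{lem:intersect-r} forces $(*)$ not to contain $x_i$ (a box produces a difference equality not implied by $\cS$ alone, so if it contained $x_i$ it would be implied by $\cS$ alone, a contradiction). Relabel so that the equations of $\cT$ not in $\cT'$ are $(*_1), \ldots, (*_t)$, with $(*_1), \ldots, (*_r)$ from $\cR$ and $(*_{r+1}), \ldots, (*_t)$ from $\cS$, and write the minimal-implication expansion as ${*} = {*''} + \sum_{j=1}^t c_j {*_j}$, where $(*'')$ denotes the sum of the equations of $\cT'$ weighted by the (nonzero) coefficients they carry in this expansion. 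Note that $(*'')$ is supported on the variables of $\cT'$, that it is nonzero (the equations of $\cT$ are independent and all the $\cT'$-coefficients are nonzero), and that in the expansion $1 \cdot {*''} + \sum_j c_j {*_j} = {*}$ every coefficient is nonzero, so every variable of $\{(*''), (*_1), \ldots, (*_t)\}$ appears at least twice among these equations together with $(*)$.

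The first step is the slot count among $(*_1), \ldots, (*_t), (*)$, which have $4(t+1)$ variable-slots in total. Here $x_i$ occupies exactly $s$ of them (it lies in the $s$ equations from $\cS$ and not in $(*)$), and each of the $v$ variables of $\cT$ that are neither $x_i$ nor in $\cT'$ occupies at least $2$ of them (any such ``new'' variable appears at least twice in $\cT$ together with $(*)$, and all of its appearances are confined to $(*_1), \ldots, (*_t), (*)$). Writing $q$ for the number of the remaining slots --- necessarily occupied by variables of $\cT'$ other than $x_i$ --- we get $s + 2v + q \le 4(t+1)$, i.e.\ $v - 2t \le 2 - \tfrac{s}{2} - \tfrac{q}{2}$. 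So it suffices to show $q \ge 4 - r - s/2$.

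To bound $q$ from below I would use $(*'')$: since $(*'')$ is a nonzero equation implied by the $c$-good configuration $\cR \cup \cS$, it cannot be trivial, cannot have two variables (this would contradict validity), and cannot have three variables (this would contradict collinearity-freeness), so it has at least four distinct variables, at most one of which is $x_i$. Each of its (at least three) non-$x_i$ variables is a variable of $\cT'$, and by the remark above it appears at least once among $(*_1), \ldots, (*_t), (*)$; these appearances contribute to $q$, so $q \ge 3$. Since $4 - r - s/2 \le 3$ unless $(r,s) = (0,1)$ (the case $t = 0$ being trivial, as then $v = 0$ and all quantities vanish), this settles all but one case.

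The remaining case $r = 0$, $s = 1$ is where I expect the real work to be, and it is where the second hypothesis of Lemma \ref{lem:intersect-r} gets used again. When $r = 0$, the complementary sum $\sum_j c_j {*_j} = {*} - {*''}$ is a combination of equations of $\cS$, hence implied by $\cS$; since $(*)$ is not implied by $\cS$, neither is $(*'')$, so $(*'')$ is not a difference equality containing $x_i$. But with $s = 1$ the variable $x_i$ has nonzero coefficient in $c_1 {*_1}$ and, being absent from $(*)$, nonzero coefficient in $(*'')$; hence $(*'')$ is not a difference equality at all. The delicate point --- which I expect to be the main obstacle --- is then to rule out $(*'')$ having exactly four variables, i.e.\ to show that a $c$-good configuration (for $c$ close to $2$) cannot imply a four-variable equation that fails to be a difference equality; I would attempt this via the parity/graph argument underlying Lemma \ref{lem:2-good-min-impl}, applied to a subcollection of $\cT'$ that expresses $(*'')$ with all-nonzero coefficients (reducing to $2$-goodness through Claim \ref{claim:c-to-2} after cutting down to a small subconfiguration). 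Granting that, $(*'')$ has at least five variables, at least four of them distinct from $x_i$, which upgrades the count to $q \ge 4$; the rest of this case is routine bookkeeping of how the variables of $(*'')$, $(*_1)$, and $(*)$ are identified, and everything else reduces to the slot count above.
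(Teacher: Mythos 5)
Your slot count and the reduction to the case $r = 0$, $s = 1$ are correct and essentially the paper's argument (your inequality $s + 2v + q \leq 4(t+1)$ together with $q \geq 3$ is the paper's $2v + 4 + (s - 1) \leq 4(t+1)$), and your observation that for $r = 0$ the equation $(*'')$ is not implied by $\cS$ alone is a valid and slightly cleaner way to set up the endgame. The gap is exactly at the point you flag as the main obstacle: the general lemma you propose to prove there --- that a $c$-good configuration cannot imply a four-variable equation that fails to be a difference equality --- is \emph{false}. For a counterexample, take the three difference equalities $x_7 - x_1 = x_4 - x_5$, $x_7 - x_2 = x_5 - x_6$, $x_7 - x_3 = x_6 - x_4$. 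This configuration is $2$-good (hence $c$-good): it has seven variables for three independent equations, any two of its equations share only two variables, and checking the coefficient functionals of a general combination $a{*_1} + b{*_2} + c{*_3}$ (namely $a+b+c,\,-a,\,-b,\,-c,\,c-a,\,a-b,\,b-c$) shows that every nonzero implied equation has at least four variables and no two independent implied equations are jointly supported on at most four variables. Yet the sum of the three contents is $3x_7 - x_1 - x_2 - x_3 = 0$, a four-variable implied equation that is not (even up to scaling) a difference equality. Since all three equations contain $x_7$, this example can sit entirely inside $\cS$ with $x_i = x_7$, so the hypotheses of Lemma \ref{lem:intersect-r} do not rescue the general statement; and the parity/graph argument of Lemma \ref{lem:2-good-min-impl} cannot force $\pm 1$ coefficients here, because an edge through the implied equation does not equate the coefficients of two different equations once the implied equation is no longer assumed to be a difference equality.

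The case can be closed, but only by using structure your write-up has and does not exploit: when $r = 0$ and $s = 1$ there is exactly \emph{one} equation of $\cT$ outside $\cT'$, so ${*''} = {*} - c_1{*_1}$ is a combination of just two difference equalities. If $(*'')$ had exactly four variables, then $(*)$ and $(*_1)$ would have to share exactly two variables (not more, by Claim \ref{claim:2-eqns-5-vars}) and both shared variables would have to cancel, which forces $c_1 = \pm 1$; then all four coefficients of $(*'')$ are $\pm 1$, so $(*'')$ is a genuine difference equality, and it contains $x_i$ (which lies in $(*_1)$ but not in $(*)$, hence is unshared). Combined with your own observation that $(*'')$ is not implied by $\cS$ alone, this contradicts the hypothesis that every difference equality containing $x_i$ implied by $\cR \cup \cS$ is implied by $\cS$; hence $(*'')$ has at least five variables, $q \geq 4$, and the bound follows. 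This is precisely how the paper handles this case (phrased as ruling out equality in the slot count and then using integrality of $v - 2t$). As written, your proposal rests on a false general claim at the decisive step and is therefore incomplete.
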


\begin{proof}
    First, if $\cT' = \cT$ then there is nothing to show (all the relevant quantities are $0$). Now assume that $\cT'$ is a proper subset of $\cT$ (i.e., $t > 0$). Let $\cT' = \{(*_1), \ldots, (*_{t'})\}$ and $\cT = \{(*_1), \ldots, (*_{t' + t})\}$, and let $(*)$ be a difference equality that $\cT$ produces (as in the proof of Lemma \ref{lem:indiv-box}, $(*)$ cannot contain $x_i$); this means \[{*} = c_1{*_1} + \cdots + c_{t' + t}{*_{t' + t}}\] for nonzero $c_1, \ldots, c_{t' + t} \in \QQ$. Let $(*')$ be the portion of this linear combination coming from $\cT'$, i.e., \[{*'} = c_1{*_1} + \cdots + c_{t'}{*_{t'}}.\] The coefficients of $(*')$ must sum to $0$; and $(*')$ cannot be identically zero because $\cT$ is independent, it cannot have two variables because $\cT$ is valid, and it cannot have three variables because $\cT$ is collinearity-free. So $(*')$ must contain four variables, and all variables in $(*')$ must be present in $\cT'$. 

    Now we can write 
    \begin{equation}
        {*'} = {*} - c_{t' + 1}{*_{t' + 1}} - \cdots - c_{t' + t}{*_{t' + t}}.\label{eqn:star-prime}
    \end{equation}
    Each of the $v$ variables which are not present in $\cT'$ must appear in at least two equations on the right-hand side of \eqref{eqn:star-prime} (they cannot appear in $(*')$, so they must cancel out of the right-hand side); each of the variables in $(*')$ must appear in at least one equation; and $x_i$ appears in exactly $s$ equations. Meanwhile, there are $t + 1$ equations on the right-hand side of \eqref{eqn:star-prime}, and each has four slots for variables to appear. So this means 
    \begin{equation}
        2v + 4 + (s - 1) \leq 4(t + 1). \label{eqn:t-tprime}
    \end{equation}
    (The reason the third term is $s - 1$ instead of $s$ is because $x_i$ could potentially be one of the four variables in $(*')$.) This rearranges to \[v - 2t \leq \frac{1}{2} - \frac{s}{2}.\] In particular, if $r \geq 1$, or if $r = 0$ and $s \geq 2$, then we are immediately done (as this is at most the desired bound of $r/2 - s/4$). So the only case it remains to consider is when $r = 0$ and $s = 1$. And for this case, it suffices to show that equality does not hold in \eqref{eqn:t-tprime} --- then we get $v - 2t < 0$, so $v - 2t \leq -1 < -1/4$. 

    Assume for contradiction that equality does hold in \eqref{eqn:t-tprime}. Then we need $(*')$ to contain \emph{exactly} four variables (otherwise we could replace the $4$ on the left-hand side with a $5$), and one of those four variables has to be $x_i$ (otherwise we could replace $s - 1$ with $s$). Also, in this case, \eqref{eqn:star-prime} states that \[{*'} = {*} - c_{t' + 1}{*_{t' + 1}}.\] But $(*)$ and $(*_{t' + 1})$ are both difference equalities, so each has exactly four variables. Then for $(*')$ to also contain exactly four variables, we need $(*)$ and $(*_{t' + 1})$ to share exactly two variables (they cannot share more than two variables because of Claim \ref{claim:2-eqns-5-vars}), and these two variables must cancel out of ${*} - {c_{t' + 1}}{*_{t' + 1}}$. In particular, this means $c_{t' + 1} = \pm 1$. But then the four variables in $(*')$ all must have coefficients of $\pm 1$, and since these coefficients sum to $0$, this means $(*')$ is a difference equality. 

    So now $(*')$ is a difference equality containing $x_i$. Since $(*')$ is minimally implied by $\cT'$, and we assumed that all difference equalities containing $x_i$ implied by $\cR \cup \cS$ are implied by $\cS$ alone, we must have $\cT' \subseteq \cS$. Finally, since $r = 0$, this means we have $\cT \subseteq \cS$ as well. But this is a contradiction, because by definition all boxes intersect $\cR$.
    
    We have shown that equality cannot hold in \eqref{eqn:t-tprime} when $r = 0$ and $s = 1$, so we are done. 
\end{proof}

\begin{proof}[Proof of Claim \ref{claim:intersect-r-boxes}]
    Imagine that we process boxes one at a time, and throughout the process, we let $v^*$, $t^*$, $r^*$, and $s^*$ be the total number of non-$x_i$ variables, total equations, equations in $\cR$, and equations in $\cS$ that we have seen so far (among the processed boxes). We claim that at all times, we have 
    \begin{equation}
        v^* - 2t^* \leq 2r^* - \frac{s^*}{4}. \label{eqn:box-processing}
    \end{equation}
    Both sides start out at $0$. Now suppose that \eqref{eqn:box-processing} was true before we added a new box $\cT$; we will show that it remains true after we add $\cT$ as well. 

    \case{1}{$\cT$ does not intersect any previously added boxes} Then we can define $t$, $r$, $s$, and $v$ as in Lemma \ref{lem:indiv-box}. Adding $\cT$ to our picture contributes exactly $t$ new equations, exactly $r$ new equations in $\cR$, and exactly $s$ new equations in $\cS$; and it contributes \emph{at most} $v$ new variables (in other words, $t^*$, $r^*$, and $s^*$ increase by exactly $t$, $r$, and $s$, while $v^*$ increases by at most $v$). And Lemma \ref{lem:indiv-box} gives that \[v - 2t \leq 2r - \frac{s}{2} \leq 2r - \frac{s}{4},\] so the left-hand side of \eqref{eqn:box-processing} increases by at most as much as the right-hand side. 

    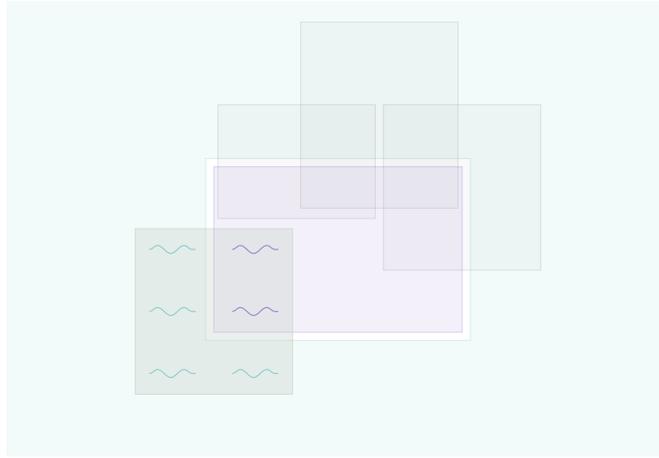
\begin{figure}[ht]
        \begin{tikzpicture}[scale = 1.1]
            \fill [DarkSlateGray3!10] (-4, -2.5) rectangle (4, 3);
            \filldraw [fill = white, draw = DarkSlateGray3!30] (-1.6, -1.1) rectangle (1.6, 1.1);
            \filldraw [fill = MediumPurple3!10, draw = MediumPurple3!30] (-1.5, -1) rectangle (1.5, 1);
            \foreach \i\j in {-2/0, -2/-0.75, -2/-1.5, -1/-1.5} {
                \draw [DarkSlateGray3, decorate, decoration = {snake, amplitude = 1.5pt}] (\i - 0.28, \j) to (\i + 0.28, \j);
            }

            \foreach \i\j in {-1/0, -1/-0.75} {
                \draw [MediumPurple3, decorate, decoration = {snake, amplitude = 1.5pt}] (\i - 0.28, \j) to (\i + 0.28, \j);
            }
            \foreach \i\j\k\l in {-1.45/0.375/0.45/1.75, -0.45/0.5/1.45/2.75, 0.55/-0.25/2.45/1.75} {
                \filldraw [gray, fill opacity = 0.05, draw opacity = 0.2] (\i, \j) rectangle (\k, \l);
            }
            \foreach \i\j\k\l in {-0.55/0.25/-2.45/-1.75} {
                \filldraw [Honeydew3, fill opacity = 0.3, draw opacity = 0.6] (\i, \j) rectangle (\k, \l);
            }
        \end{tikzpicture}
        \caption{In the proof of Claim \ref{claim:intersect-r-boxes}, if we add the box shown in green (having already added the boxes shown in gray), then we are in \casetext{Case 1} with $r = 2$ and $s = 4$ (and $t = 6$).}
    \end{figure}

    \case{2}{$\cT$ does intersect the previously added boxes} Then let $\cT'$ be the intersection of $\cT$ with all previously added boxes, and define $t$, $r$, $s$, and $v$ as in Lemma \ref{lem:overlap-box}. Again, adding $\cT$ to our picture contributes exactly $t$ new equations, $r$ new equations in $\cR$, and $s$ new equations in $\cS$; and it contributes at most $v$ new variables (the variables in $\cT$ which also appear in $\cT'$ are certainly not new). Lemma \ref{lem:overlap-box} gives that \[v - 2t \leq \frac{r}{2} - \frac{s}{4} \leq 2r - \frac{s}{4}.\] So again the left-hand side of \eqref{eqn:box-processing} increases by at most as much as the right-hand side. 

    \begin{figure}[ht]
        \begin{tikzpicture}[scale = 1.1]
            \fill [DarkSlateGray3!10] (-4, -2.5) rectangle (4, 3);
            \filldraw [fill = white, draw = DarkSlateGray3!30] (-1.6, -1.1) rectangle (1.6, 1.1);
            \filldraw [fill = MediumPurple3!10, draw = MediumPurple3!30] (-1.5, -1) rectangle (1.5, 1);
            \foreach \i\j in {0/2.25, 1/2.25} {
                \draw [DarkSlateGray3, decorate, decoration = {snake, amplitude = 1.5pt}] (\i - 0.28, \j) to (\i + 0.28, \j);
            }
            \foreach \i\j in {0/1.5, 1/1.5} {
                \draw [DarkSlateGray3!50, decorate, decoration = {snake, amplitude = 1.5pt}] (\i - 0.28, \j) to (\i + 0.28, \j);
            }

            \foreach \i\j in {0/0} {
                \draw [MediumPurple3, decorate, decoration = {snake, amplitude = 1.5pt}] (\i - 0.28, \j) to (\i + 0.28, \j);
            }
            \foreach \i\j in {1/0, 0/0.75, 1/0.75} {
                \draw [MediumPurple3!50, decorate, decoration = {snake, amplitude = 1.5pt}] (\i - 0.28, \j) to (\i + 0.28, \j);
            }
            \foreach \i\j\k\l in {-1.45/0.5/0.45/1.75, 0.55/-0.25/2.45/1.75} {
                \filldraw [gray, fill opacity = 0.05, draw opacity = 0.2] (\i, \j) rectangle (\k, \l);
            }
            \foreach \i\j\k\l in {-0.45/-0.375/1.45/2.75} {
                \filldraw [Honeydew3, fill opacity = 0.3, draw opacity = 0.6] (\i, \j) rectangle (\k, \l);
            }
        \end{tikzpicture}
        \caption{In the proof of Claim \ref{claim:intersect-r-boxes}, if we add the box shown in green (having already added the boxes shown in gray), then we are in \casetext{Case 2} with $r = 1$ and $s = 2$ (and $t = 3$).}
    \end{figure}
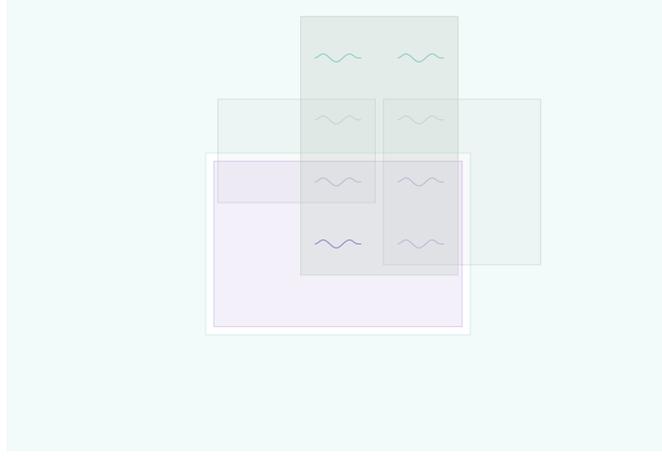
    
    This means \eqref{eqn:box-processing} remains true throughout the process; in particular, it is true at the end, when we have processed all boxes (so $t^*$ is the size of the union of all boxes, and $r^*$ and $s^*$ are the sizes of its intersections with $\cR$ and $\cS$). 

    Now, the fact that $\cR \cup \cS$ is $c$-good means that $v^* \geq ct^*$ (the reason we do not have a $+1$ is because $v^*$ does not count $x_i$). Combining this with \eqref{eqn:box-processing} gives \[(c - 2)t^* \leq v^* - 2t^* \leq 2r^* - \frac{s^*}{4},\] and plugging in $t^* = r^* + s^*$ and rearranging gives \[\left(\frac{1}{4} - (2 - c)\right)s^* \leq (2 + (2 - c))r^*.\] For $c$ sufficiently close to $2$, this means $s^* \leq 15r^*$, so $t^* \leq 16r^* \leq 16\abs{\cR}$. 
\end{proof}

\section{Adding equations to a huge star} \label{sec:huge-star}

In this section, we prove Lemma \ref{lem:outside-star}, which states that if we start with a huge star $\cP$ and add a small collection $\cS$ of additional difference equalities involving $x_i$ (without adding $x_i$ to the star), then the number of pairs $(i, \bullet)$ we certify cannot be much more than the size of $\cS$. This proof has two components. The first says that by enlarging $\cS$ a bit, we can find a (not necessarily independent) set of `representatives' $\cS'$ such that if we want to understand what difference equalities containing $x_i$ are implied by $\cS \cup \cP$, it suffices to consider implications involving only \emph{one} equation from $\cS'$ (\emph{a priori} we would need to consider implications with arbitrarily many equations from $\cS$). The second component handles implications of this simple form. 

\begin{lemma} \label{lem:representatives}
    There exists a set $\cS'$ of difference equalities containing $x_i$ which are implied by $\cS \cup \cP$ such that $\sabs{\cS'} \leq 2\abs{\cS}$, and every difference equality containing $x_i$ which is implied by $\cS \cup \cP$ is in fact implied by $\{(*)\} \cup \cP$ for some $(*) \in \cS'$. 
\end{lemma}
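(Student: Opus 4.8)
The plan is to work modulo the star. Write $\cS = \{\sigma_1, \ldots, \sigma_m\}$ with $m = \abs{\cS}$, let $V \subseteq \QQ^k$ be the linear span of the contents of the equations of $\cP$ (so $\dim V = p - 1$), and for a linear form $L$ write $\ol L$ for its image in $\QQ^k/V$. Since every equation of $\cP$ involves only $x_1, \ldots, x_{2p}$, passing to the quotient preserves the coefficient of $x_i$ (and of every $x_j$ with $j > 2p$), so $\ol{\sigma_j}$ and the image of any difference equality containing $x_i$ still carry a $\pm 1$ coefficient on $x_i$. Two elementary facts will drive the argument: (i) a difference equality $L$ containing $x_i$ is implied by $\cS \cup \cP$ if and only if $\ol L$ lies in $U := \langle \ol{\sigma_1}, \ldots, \ol{\sigma_m}\rangle$; and (ii) if $L, L'$ are difference equalities containing $x_i$ with $\ol L$ and $\ol{L'}$ on the same line through the origin, then (both having $x_i$-coefficient $\pm 1$) in fact $\ol L = \pm \ol{L'}$, so $L \mp L' \in V$ and hence each of $L, L'$ is implied by the other together with $\cP$. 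By (ii) it suffices to show that the images $\ol L$ of difference equalities $L$ containing $x_i$ implied by $\cS \cup \cP$ occupy at most $2m$ lines through the origin; we can then let $\cS'$ consist of one genuine such $L$ on each of these lines (each line contains such an $L$, which is a difference equality containing $x_i$ implied by $\cS \cup \cP$, hence a legitimate element of $\cS'$), and any difference equality $L'$ containing $x_i$ implied by $\cS \cup \cP$ will be implied by $\{(*)\} \cup \cP$ for the $(*) \in \cS'$ on its line.

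To bound the number of lines, I would analyze the minimal implications inside $\cS \cup \cP$. Let $\cT \subseteq \cS \cup \cP$ be a minimal implication producing a difference equality $L$ containing $x_i$, and split $\cT = \cT_{\cS} \cup \cT_{\cP}$. In the relation writing $L$ as a combination of the equations of $\cT$, every variable appears at least twice among those equations together with $L$, while $x_i$ appears exactly $\abs{\cT_{\cS}} + 1$ times; counting these against the $4(\abs{\cT} + 1)$ available variable-slots bounds the number of variables of $\cT$ from above, and combined with the lower bound $c\abs{\cT} + 1$ from $c$-goodness of $\cT$ (subsets of a $c$-good collection are $c$-good) this forces $\abs{\cT_{\cS}}$ to be an absolute constant once $2 - c$ is small relative to $k$, as we assume. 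Since the equations of $\cP$ pairwise share only $x_1$ and $x_2$, each one lying in $\cT_{\cP}$ introduces two variables that must recur among $\cT_{\cS}$ and $L$, so a similar count bounds $\abs{\cT_{\cP}}$; hence $\cT$ has bounded size. Being $c$-good and of bounded size, $\cT$ is $2$-good by Claim \ref{claim:c-to-2}, so Lemma \ref{lem:2-good-min-impl} applies: the coefficients in the implication are all $\pm 1$, and the multiplicity of $x_i$ -- which equals $\abs{\cT_{\cS}} + 1$ -- must be $2$ or $4$. Therefore $\abs{\cT_{\cS}} \in \{1, 3\}$, and since $\cT_{\cP}$ vanishes modulo $V$ we conclude that $\ol L = \pm \ol{\sigma_a}$ for a single index $a$, or $\ol L = \pm(\ol{\sigma_a} + \ol{\sigma_b} - \ol{\sigma_c})$ for three indices $a, b, c$ (the signs being $\pm 1$ and summing to $\pm 1$).

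Finally I would count the lines. The lines of the form $\langle \ol{\sigma_j}\rangle$ number at most $m$ and account for every line arising from the case $\abs{\cT_{\cS}} = 1$ (with representative $\sigma_j \in \cS$ available). For a line arising from $\abs{\cT_{\cS}} = 3$, namely $\langle \ol{\sigma_a} + \ol{\sigma_b} - \ol{\sigma_c}\rangle$, I would charge it to its distinguished equation $\sigma_c$ (the one entering with the minus sign) and show that no $\sigma \in \cS$ is the distinguished equation of more than one such line; this yields at most $m$ further lines, hence at most $2m$ in all. I expect proving this last injectivity to be the main obstacle: the idea is that the validity and $c$-goodness of $\cS \cup \cP$ are rigid enough -- two difference equalities containing $x_i$ cannot share three variables with matching signs, or they would force $x_a = x_b$, and Lemma \ref{lem:box-subbox} restricts which $2$-full sub-collections can occur -- to recover the unordered pair $\{\sigma_a, \sigma_b\}$ (and the difference equality $L$ it produces, up to $V$) from $\sigma_c$ alone. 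Granting this, the assembled $\cS'$ has $\abs{\cS'} \le 2m$ and the required property by fact (ii), completing the proof.
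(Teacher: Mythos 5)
Your reduction to counting lines in the quotient by the star is sound, and up to that point you are essentially reformulating the paper's argument: facts (i) and (ii) are correct (the $x_i$-coefficient survives the quotient since $\cP$ only involves $x_1,\ldots,x_{2p}$ and $i>2p$), the slot-counting bound on minimal implications and the application of Claim \ref{claim:c-to-2} and Lemma \ref{lem:2-good-min-impl} correctly force the $\cS$-part of any relevant minimal implication to have size $1$ or $3$, mirroring the paper's claim that every ``box'' has head of size exactly $3$ and tail of size at most $2$. The genuine gap is exactly where you flag it: the injectivity of your charging scheme --- that no $\sigma_c \in \cS$ can be the distinguished (minus-sign) equation of two distinct lines $\langle\ol{\sigma_a}+\ol{\sigma_b}-\ol{\sigma_c}\rangle$ --- is asserted but not proved, and it is not a routine verification. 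It amounts to showing that any two minimal implications whose $\cS$-parts share even one equation must in fact share their entire three-equation head and produce proportional equalities modulo the star. That statement is the real content of the paper's proof: it is established there via Lemma \ref{lem:2-full-intersection} (intersections of $2$-full collections are $2$-full), Lemma \ref{lem:box-subbox} (a $2$-full sub-collection is itself a minimal implication with matching signs), the observation that only $x_i$ can appear three or more times in a box, and finally the fluffy/non-fluffy dichotomy with Claim \ref{claim:non-fluffy-disjoint} (non-fluffy boxes are pairwise disjoint). Without some version of this argument your bound of at most $m$ additional lines is unsupported; note also that once this machinery is in place one gets the stronger count of at most $\abs{\cS}/3$ additional representatives (disjoint heads of size $3$), which is how the paper reaches $\sabs{\cS'}\leq 4\abs{\cS}/3\leq 2\abs{\cS}$. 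So the proposal is a plausible repackaging of the paper's approach, but it defers rather than supplies the decisive step.
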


\begin{lemma} \label{lem:indiv-eqn}
    For any difference equality $(*)$ containing $x_i$, the set $\{(*)\} \cup \cP$ certifies at most $3$ pairs $(i, \bullet)$. 
\end{lemma}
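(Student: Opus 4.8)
The plan is to list every difference equality containing $x_i$ that $\{(*)\}\cup\cP$ implies, and then simply read off how many pairs $(i,\bullet)$ they certify. I will use the standing assumptions in force at this point in the proof of Lemma~\ref{lem:outside-star}: since $(*)$ will be one of the representatives produced by Lemma~\ref{lem:representatives}, it is implied by $\cS\cup\cP$, so $\{(*)\}\cup\cP$ is $c$-good and does not imply $x_1+x_2=x_i+x_j$ for any $j>2p$ (both inherited from $\cS\cup\cP$). These hypotheses are essential: without the second one, $(*):x_i+x_\beta=x_1+x_2$ with $x_\beta$ outside the star would force $x_i+x_\beta=x_{2t-1}+x_{2t}$ for every $t$ and thus certify roughly $2p$ pairs $(i,\bullet)$.

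First I would record the basic description of what gets implied. Every equation of $\cP$ is $(x_{2t-1}+x_{2t})-(x_1+x_2)=0$, so a linear combination of $\cP$'s equations is precisely an expression $\sum_{t=1}^p e_t(x_{2t-1}+x_{2t})$ with $\sum_t e_t=0$. An implied equation containing $x_i$ must involve $(*)$ with nonzero coefficient, so after scaling to make the coefficient of $x_i$ equal to $1$ it has the form
\[(x_i+x_\beta-x_\gamma-x_\delta)+\sum_{t=1}^p e_t(x_{2t-1}+x_{2t})=0,\qquad \sum_t e_t=0,\]
where I write $(*):x_i+x_\beta=x_\gamma+x_\delta$ (this has four distinct variables, since $\{(*)\}\cup\cP$ is collinearity-free and valid; these two properties also rule out the ``$j'=i$'' midpoint alternative in the definition of ``certifies'', as that would be a three-variable consequence). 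So the task becomes: among those expressions that happen to be difference equalities --- four distinct variables, coefficients $\pm1$, two of each sign --- how many pairs $(i,j)$ with $j<i$ are certified in total.

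The next step is to constrain $(e_t)$. Validity rules out $\{\gamma,\delta\}$ being a star pair of index $\le 2p$, and the forbidden-implication hypothesis rules it out for index $>2p$, so I may assume $\{\gamma,\delta\}$ is not one of the pairs $\{2t-1,2t\}$. Given this, a short coefficient count shows $e_t=0$ unless the pair $\{2t-1,2t\}$ meets $\{\beta,\gamma,\delta\}$ --- so at most three $e_t$ are nonzero --- and, since $x_\beta$ has coefficient $+1$ while $x_\gamma,x_\delta$ have coefficient $-1$ in $(*)$, the only effective reflections are $e_t=-1$ on $\beta$'s pair (which requires $\beta$ to be a star variable) and $e_t=+1$ on $\gamma$'s or $\delta$'s pair; since $\sum_t e_t=0$ there is at most one of each. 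Hence the only difference equalities containing $x_i$ implied by $\{(*)\}\cup\cP$ are $(*)$ itself and, when $\beta$ is a star variable (partner $x_{\beta'}$) and $\gamma$ (resp.\ $\delta$) is a star variable lying in a different pair (partner $x_{\gamma'}$, resp.\ $x_{\delta'}$), the reflected equations $x_i+x_{\gamma'}=x_{\beta'}+x_\delta$ and $x_i+x_{\delta'}=x_{\beta'}+x_\gamma$.

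Finally I would read off the certified pairs: $(*)$ certifies $(i,\gamma)$ and $(i,\delta)$; $x_i+x_{\gamma'}=x_{\beta'}+x_\delta$ certifies $(i,\beta')$ and $(i,\delta)$; $x_i+x_{\delta'}=x_{\beta'}+x_\gamma$ certifies $(i,\beta')$ and $(i,\gamma)$ --- with possibly fewer once one imposes the index condition that the two ``other'' indices be $<i$. Since the reflected equations keep $x_{\gamma'},x_{\delta'}$ on the same side as $x_i$, they never certify $(i,\gamma')$ or $(i,\delta')$, so the union of certified pairs lies in $\{(i,\gamma),(i,\delta),(i,\beta')\}$, a set of size at most three. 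I expect the main obstacle to be the third step: the coefficient bookkeeping that pins down exactly which $(e_t)$ yield a difference equality, where one must treat several small sub-cases according to which of $\beta,\gamma,\delta$ are star variables and how they share star pairs --- in particular the degenerate situation where $\{\beta,\gamma\}$ or $\{\beta,\delta\}$ is itself a star pair, in which no nontrivial reflection is possible and only $(*)$ survives.
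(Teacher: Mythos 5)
Your proposal is correct and takes essentially the same route as the paper: both describe every implied equality containing $x_i$ as $(*)$ plus a zero-sum combination of the star-pair sums and then pin down the admissible $\pm 1$ sign patterns, your uniform analysis of the coefficients $e_t$ (yielding only the two ``reflections'' of $(*)$, hence certified pairs confined to $\{(i,\gamma),(i,\delta),(i,\beta')\}$) being a streamlined version of the paper's four-way casework on how many variables of $(*)$ lie in the star. You are also right that the statement must be read with the standing hypotheses of Lemma \ref{lem:outside-star} ($c$-goodness of $\{(*)\}\cup\cP$ and the exclusion of $x_1+x_2=x_i+x_j$), which is exactly how the paper's proof uses them.
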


We prove Lemma \ref{lem:representatives} in Subsection \ref{subsec:representatives} and Lemma \ref{lem:indiv-eqn} in Subsection \ref{subsec:indiv-eqn}. Together, they immediately imply Lemma \ref{lem:outside-star} --- Lemma \ref{lem:representatives} means that every pair $(i, \bullet)$ certified by $\cS \cup \cP$ is certified by $\{(*)\} \cup \cP$ for some $\{(*)\} \in \cS'$, and Lemma \ref{lem:indiv-eqn} means that the total number of pairs $(i, \bullet)$ certified by sets of this form is at most $3\abs{\cS'} \leq 6\abs{\cS}$. 

\subsection{Finding a set of representatives} \label{subsec:representatives}

In this subsection, we prove Lemma \ref{lem:representatives}. Imagine that we draw a \emph{box} around every subset of $\cS \cup \cP$ which contains more than one equation from $\cS$ and minimally implies a difference equality containing $x_i$. We define the \emph{head} and \emph{tail} of a box as its intersections with $\cS$ and $\cP$.

\begin{figure}[ht]
    \begin{tikzpicture}
        \fill [DarkSlateGray3!5] (-3, 0) rectangle (3, 2.25);
        \node [DarkSlateGray3, anchor = north west] at (-3, 2.25) {$\cS$};
        \fill [MediumPurple3!5] (-3, 0) rectangle (3, -2);
        \node [MediumPurple3, anchor = south west] at (-3, -2) {$\cP$};
        \foreach \i\j in {0/0.5, 0/1, 0/1.5} {
            \draw [DarkSlateGray3, decorate, decoration = {snake, amplitude = 1.5pt}] (\i - 0.75, \j) to (\i + 0.75, \j);
        }
        \foreach \i\j in {0/-0.5, 0/-1} {
            \draw [MediumPurple3, decorate, decoration = {snake, amplitude = 1.5pt}] (\i - 0.75, \j) to (\i + 0.75, \j);
        }
        \draw [gray!50, dashed] (-3, 0) -- (3, 0);
        \filldraw [gray, fill opacity = 0.07, draw opacity = 0.3] (-1, -1.35) rectangle (1, 1.85);
        \filldraw [DarkSlateGray3, fill opacity = 0.07, draw opacity = 0.3] (-0.9, 0.1) rectangle (0.9, 1.75);
        \filldraw [MediumPurple3, fill opacity = 0.07, draw opacity = 0.3] (-0.9, -0.1) rectangle (0.9, -1.25);
        \node [DarkSlateGray3, font = {\Huge}] at (1.3, 1) {$\rightsquigarrow$};
        \node [DarkSlateGray3, anchor = west] at (1.6, 1.05) {head};
        \node [MediumPurple3, font = {\Huge}] at (1.3, -0.75) {$\rightsquigarrow$};
        \node [MediumPurple3, anchor = west] at (1.6, -0.7) {tail};
        \begin{scope}[xshift = 8cm, yscale = 0.75]
            \node at (0, 2.5) {$(*_1) : x_i - x_3 - x_5 + x_7 = 0$};
            \node at (0, 1.5) {$(*_2) : x_i - x_4 + x_{10} - x_{11} = 0$};
            \node at (0, 0.5) {$(*_3) : x_i - x_5 - x_{11} + x_{13} = 0$};
            \node at (0, -0.5) {$(*_4) : x_1 + x_2 - x_3 - x_4 = 0$};
            \node at (0, -1.5) {$(*_5) : x_1 + x_2 - x_9 - x_{10} = 0$};
            \filldraw [gray, fill opacity = 0.07, draw opacity = 0.3] (-2.5, -2) rectangle (2.5, 3);
            \filldraw [DarkSlateGray3, fill opacity = 0.07, draw opacity = 0.3] (-2.4, 0.1) rectangle (2.4, 2.9);
            \filldraw [MediumPurple3, fill opacity = 0.07, draw opacity = 0.3] (-2.4, -0.1) rectangle (2.4, -1.9);
        \end{scope}
        
    \end{tikzpicture}
    \caption{A schematic of a box on the left, and an actual example on the right. (This box minimally implies $(*) : x_i + x_7 - x_9 - x_{13} = 0$, with ${*} = {*_1} + {*_2} - {*_3} - {*_4} + {*_5}$.)}
\end{figure}
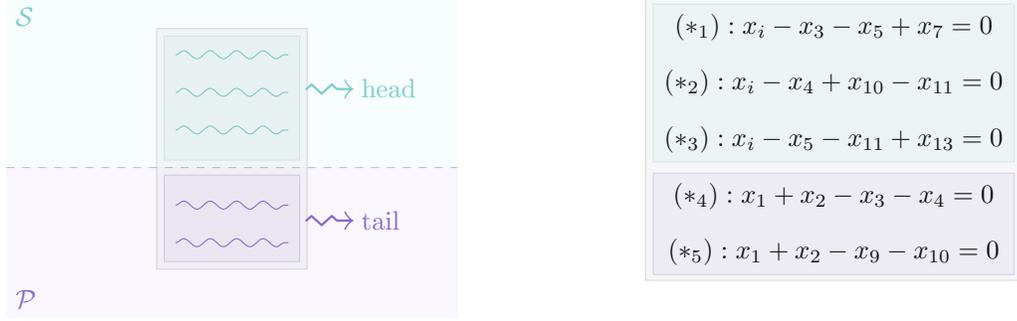

\begin{claim}
    Every box has a head of size exactly $3$ and tail of size at most $2$. 
\end{claim}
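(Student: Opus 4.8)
The plan is to analyze an arbitrary box $\cT$ by first showing it is $2$-good and then reading off the conclusion from Lemma~\ref{lem:2-good-min-impl}. Write $\cT = \cH \cup \cL$, where $\cH = \cT \cap \cS$ is the head and $\cL = \cT \cap \cP$ is the tail, and set $h = \abs{\cH}$, $l = \abs{\cL}$, $t = h + l$; by definition of a box we have $h \geq 2$, and $\cT$ minimally implies some difference equality $(*)$ containing $x_i$. The bookkeeping facts I would record first are: $\cT$ is $c$-good (being a subset of the $c$-good set $\cS \cup \cP$), so $(*)$ has four distinct variables; every variable of $\cT$ appears at least twice among $(*_1), \dots, (*_t), (*)$; the variable $x_i$ appears there exactly $h + 1$ times, namely once in each head equation and once in $(*)$ but never in a tail equation (tail equations involve only $x_1, \dots, x_{2p}$ and $i > 2p$); and $x_1$ and $x_2$ each appear at least $l$ times, once per tail equation.

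Next I would bound $t$ by a slot count. Among $(*_1), \dots, (*_t), (*)$ there are $4(t+1)$ variable-slots, and distributing them according to the multiplicities above gives $2v \leq 3h + 2l + 9$, where $v$ is the number of variables of $\cT$ (this inequality also holds in the degenerate case $l = 0$, where $x_1, x_2$ need not occur at all, since then it reads $2v \leq 3h + 9$, which is weaker than the true bound $2v \leq 3h + 5$). Since $\cT$ is $c$-good it implies $t$ independent equations on $v$ variables, so $v \geq ct + 1$; combining this with the slot bound yields $(2c - 3)h + (2c - 2)l \leq 7$. As $c$ is close to $2$, both coefficients are bounded below by positive absolute constants, so $h$ and $l$, hence $t$, are at most an absolute constant — in particular $t < 1/(2 - c)$. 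Claim~\ref{claim:c-to-2} then shows $\cT$ is $2$-good, so Lemma~\ref{lem:2-good-min-impl}\ref{item:variable-counts} applies.

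Finally I would run the dichotomy of Lemma~\ref{lem:2-good-min-impl}\ref{item:variable-counts}. In the case where every variable of $\cT$ appears exactly twice, $x_i$ would appear twice, forcing $h + 1 = 2$ and contradicting $h \geq 2$; so $\cT$ falls into the other case, where one variable appears four times and all others appear twice. Since $x_i$ appears $h + 1 \geq 3$ times, it cannot be one of the twice-occurring variables, so it must be the one occurring four times, giving $h + 1 = 4$, i.e.\ $h = 3$. Then $x_1$ — which is distinct from $x_i$ since $i > 2p \geq 2$ — is one of the twice-occurring variables, yet it occurs at least $l$ times, so $l \leq 2$, as claimed.

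I expect the only step with real content to be the slot count that bounds $t$; everything after establishing $2$-goodness is a direct application of Lemma~\ref{lem:2-good-min-impl}. The one point to handle carefully is the degenerate empty-tail case $l = 0$ noted above, together with checking that $x_1$, $x_2$, $x_i$ really are three distinct variables, which holds because $1, 2 \leq 2p < i$.
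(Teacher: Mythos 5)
Your proof is correct and follows essentially the same route as the paper: the same slot-count on the multiplicities of $x_i$, $x_1$, $x_2$ combined with $c$-lightness to bound the size of the box, then Claim \ref{claim:c-to-2} and Lemma \ref{lem:2-good-min-impl}\ref{item:variable-counts} to force $x_i$ to be the quadruply-occurring variable (head $= 3$) and $x_1$ to occur at most twice (tail $\leq 2$). The only difference is cosmetic — you rearrange the inequality as $(2c-3)h + (2c-2)l \leq 7$ instead of $t \leq 7$, and you spell out the degenerate empty-tail case that the paper leaves implicit.
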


\begin{proof}
    Let $\cT = \{(*_1), \ldots, (*_t)\}$ be a box, and let $(*)$ be a difference equality involving $x_i$ that it minimally implies. Let its head and tail have sizes $s$ and $p$, respectively, and suppose that it contains $v$ variables. Then we can write \[{*} = c_1{*_1} + \cdots + c_t{*_t}\] for nonzero $c_1, \ldots, c_t \in \QQ$, so every variable appears at least twice among $(*_1)$, \ldots, $(*_t)$, $(*)$. Furthermore, $x_i$ appears exactly $s + 1$ times, and $x_1$ and $x_2$ each appear at least $p$ times (every equation in $\cP$ contains $x_1 + x_2$). There are $t + 1 = s + p + 1$ equations, and each has four slots for variables to appear, so we have \[2(v - 3) + (s + 1) + 2p \leq 4(s + p + 1),\] which rearranges to \[v \leq \frac{3s + 2p + 9}{2} \leq \frac{3t + 9}{2}.\] The fact that $\cT$ is $c$-good means that $v \geq ct + 1$, so for $c$ sufficiently close to $2$, we get $t \leq 7$. 
    
    Now Claim \ref{claim:c-to-2} means that $\cT$ is actually $2$-good, so we can apply Lemma \ref{lem:2-good-min-impl}\ref{item:variable-counts}. We have $s > 1$ (since by definition, a box contains more than one equation from $\cS$) and $x_i$ appears in exactly $s + 1$ equations among $(*_1)$, \ldots, $(*_t)$, $(*)$. So we must have $s = 3$, and $x_i$ is the variable that appears four times (as given by Lemma \ref{lem:2-good-min-impl}\ref{item:variable-counts}). This means every other variable appears twice; in particular, the fact that $x_1$ and $x_2$ appear twice means that $p \leq 2$. 
\end{proof}

Now Claim \ref{claim:c-to-2} means that every box --- and every union of a small number of boxes --- is $2$-good, so we can use the results of Section \ref{sec:observations}. In particular, Lemma \ref{lem:2-good-min-impl}\ref{item:variable-counts} means that every box is $2$-full (since $x_i$ appears three times in the box), and Lemma \ref{lem:2-good-min-impl}\ref{item:unique} means that every box produces a \emph{unique} difference equality. 

We say a box $\cT$ is \emph{fluffy} if there exists another box $\cT'$ with the same head and strictly smaller tail (meaning that the tail of $\cT'$ is a strict subset of the head of $\cT$). 

\begin{figure}[ht]
    \begin{tikzpicture}
        \fill [DarkSlateGray3!5] (-3, 0) rectangle (3, 2.1);
        \fill [MediumPurple3!5] (-3, 0) rectangle (3, -2);
        \foreach \i\j in {0/0.5, 0/1, 0/1.5} {
            \draw [DarkSlateGray3, decorate, decoration = {snake, amplitude = 1.5pt}] (\i - 0.75, \j) to (\i + 0.75, \j);
        }
        \foreach \i\j in {0/-0.5, 0/-1} {
            \draw [MediumPurple3, decorate, decoration = {snake, amplitude = 1.5pt}] (\i - 0.75, \j) to (\i + 0.75, \j);
        }
        \draw [gray!50, dashed] (-3, 0) -- (3, 0);
        \filldraw [gray, fill opacity = 0.07, draw opacity = 0.3] (-1, -1.35) rectangle (1, 1.85);
        \filldraw [Honeydew3, fill opacity = 0.3, draw opacity = 0.6] (-0.9, -0.75) rectangle (0.9, 1.75);
    \end{tikzpicture}
    \caption{Here the larger box (shown in gray) is fluffy, because the smaller box (shown in green) has the same head and strictly smaller tail.}
\end{figure}
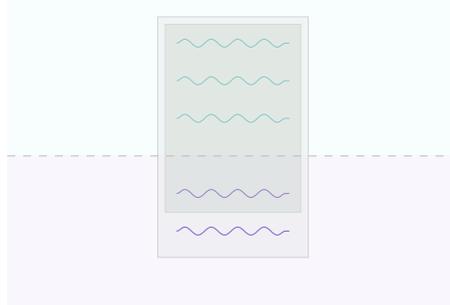

\begin{claim} \label{claim:non-fluffy-disjoint}
    Any two non-fluffy boxes are disjoint. 
\end{claim}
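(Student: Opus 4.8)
The plan is to argue by contrapositive: given two distinct boxes $\cT_1$ and $\cT_2$ with $\cT_1 \cap \cT_2 \neq \emptyset$, I want to show that one of them is fluffy. First note that every box is $2$-full — its three head equations each contain $x_i$ and so does the difference equality it produces, so $x_i$ appears at least (hence, by Lemma \ref{lem:2-good-min-impl}\ref{item:variable-counts}, exactly) four times, forcing the box into the $2t+1$-variable case. Also $\cT_1 \cup \cT_2$ is a union of a bounded number of boxes, hence a subset of the $c$-good set $\cS \cup \cP$, hence $2$-good by Claim \ref{claim:c-to-2}, and it is linearly independent since $\cS \cup \cP$ is. So Lemma \ref{lem:2-full-intersection} applies: $\cT_1 \cap \cT_2$ is $2$-full; in particular it has at least three equations, since a nonempty $2$-full collection of size $1$ or $2$ is impossible (a single difference equality has four variables, and a valid collinearity-free pair has at least six by Claim \ref{claim:2-eqns-5-vars}). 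By Lemma \ref{lem:box-subbox}, $\cT_1 \cap \cT_2$ is itself a minimal implication, producing a difference equality $g$, and replacing $\cT_1 \cap \cT_2$ by $g$ inside $\cT_i$ still yields a minimal implication for the equation produced by $\cT_i$.

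The main dichotomy is whether $\cT_1$ and $\cT_2$ have the same head. Suppose first that they do, say $H = \operatorname{head}(\cT_1) = \operatorname{head}(\cT_2)$. Then $H \subseteq \cT_1 \cap \cT_2$, and since its three equations in $H$ all contain $x_i$, parity (Lemma \ref{lem:2-good-min-impl}\ref{item:variable-counts}) forces $g$ to contain $x_i$ as well. Hence $\cT_1 \cap \cT_2$ is a box, with head $H$ (its intersection with $\cS$ is contained in, and contains, $H$) and tail $\operatorname{tail}(\cT_1) \cap \operatorname{tail}(\cT_2)$. Since $\cT_1 \neq \cT_2$ and their heads agree, their tails differ; so, possibly after swapping the roles of $\cT_1$ and $\cT_2$, this common tail is a strict subset of $\operatorname{tail}(\cT_1)$, and then $\cT_1 \cap \cT_2$ witnesses that $\cT_1$ is fluffy.

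It remains to handle the case $\operatorname{head}(\cT_1) \neq \operatorname{head}(\cT_2)$. Here $\cT_1 \cap \cT_2$ shares $a$ equations with each head, where $1 \le a \le 2$: it shares at least one because no subcollection of the star $\cP$ is $2$-full (a $q$-element subcollection has $2q+2$ variables, not $2q+1$), and at most two because the heads differ and each has size $3$. If the common tail has $b$ equations then $a+b \ge 3$ and $b \le 2$. The key point is that since $\cT_1$ is $2$-full with $x_i$ as its unique four-times variable, in each sub-case one can pin down how often $x_1$ and $x_2$ (which lie in every equation of $\cP$) appear inside $\cT_1$, and hence show they cannot appear in the relevant head equations or in $g$; combining this with the fact that $g$ has exactly four variables, the appearance counts of Lemma \ref{lem:2-good-min-impl}\ref{item:variable-counts}, the two-shared-variables bound for distinct difference equalities (Claim \ref{claim:2-eqns-5-vars}), the validity of $\cT_1 \cup \cT_2$, and the running hypothesis that $\cS \cup \cP$ does not imply $x_1 + x_2 = x_i + x_j$ for $j > 2p$, a finite case analysis over $a$, $b$, and $\sabs{\operatorname{tail}(\cT_1)}$ yields a contradiction in every case. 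For instance, when $a = 2$ and $b = 1$, maximizing the number of variable-appearances forces the two "new" variables of the two shared head equations each to occur in both of them, so those two head equations share three variables, contradicting Claim \ref{claim:2-eqns-5-vars}; in the looser sub-cases one instead shows, via Lemma \ref{lem:box-subbox}, that the three head equations of $\cT_1$ already form a box (so $\cT_1$ is fluffy).

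The step I expect to be the main obstacle is this last one: carefully bookkeeping the variable-appearance constraints in the small minimal implication $\cT_1 \cap \cT_2$ across all sub-cases of $\operatorname{head}(\cT_1) \neq \operatorname{head}(\cT_2)$, and in particular identifying precisely where validity and the hypothesis that $\cS \cup \cP$ avoids $x_1 + x_2 = x_i + x_j$ are needed to rule out the tightest configurations.
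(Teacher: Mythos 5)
Your setup (every box is $2$-full, the intersection $\cT' = \cT_1 \cap \cT_2$ is $2$-full by Lemma \ref{lem:2-full-intersection}, and is itself a minimal implication by Lemma \ref{lem:box-subbox}) and your equal-heads case match the paper and are fine. The genuine gap is the heads-differ case: you do not actually prove it, but instead assert that ``a finite case analysis over $a$, $b$, and $\sabs{\mathrm{tail}(\cT_1)}$ yields a contradiction in every case,'' flag it yourself as the main obstacle, and offer only one sketched sub-case whose reasoning (two head equations being forced to share three variables) is not carried out. As written, the claim is not established.

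The missing observation --- which is how the paper closes the argument, and which makes the dichotomy on heads unnecessary --- is a short appearance count. Apply Lemma \ref{lem:2-good-min-impl}\ref{item:variable-counts} to $\cT_1$ together with the difference equality it produces: $x_i$ is the variable appearing four times, so every \emph{other} variable appears at most twice inside $\cT_1$, while $x_i$ appears exactly three times in $\cT_1$, all in its head. Since $\cT'$ is $2$-full, Lemma \ref{lem:2-good-min-impl}\ref{item:variable-counts} applied to the minimal implication $\cT'$ (producing $g$) forces some variable to appear four times among the equations of $\cT'$ and $g$, hence at least three times in $\cT'$ itself; as $\cT' \subseteq \cT_1$, that variable can only be $x_i$, so $\cT'$ must contain all three head equations of $\cT_1$ (and, symmetrically, of $\cT_2$ --- so two intersecting boxes automatically have the same head, and your second case is vacuous). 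Since $x_i$ must then also appear in $g$, $\cT'$ is itself a box with the same head as $\cT_1$ and, after choosing $\cT_1 \not\subseteq \cT_2$, strictly smaller tail, so $\cT_1$ is fluffy. Note also that the hypothesis that $\cS \cup \cP$ does not imply $x_1 + x_2 = x_i + x_j$ is not needed anywhere in this claim; that it appears in your sketch is a sign the intended case analysis had not been pinned down.
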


\begin{proof}
    Suppose that $\cT_1$ and $\cT_2$ are distinct boxes which are not disjoint, and without loss of generality assume that $\cT_1$ is not a subset of $\cT_2$; then we will show that $\cT_1$ is fluffy. 

    Let $\cT' = \cT_1 \cap \cT_2$. First, since any two boxes are $2$-full, $\cT'$ is also $2$-full by Lemma \ref{lem:2-full-intersection}. Then Lemma \ref{lem:box-subbox} (applied to $\cT' \subseteq \cT_1$) says that $\cT'$ is itself a minimal implication, implying some difference equality $(*')$. 
    
    Since $\cT'$ is $2$-full, Lemma \ref{lem:2-good-min-impl}\ref{item:variable-counts} means that some variable must appear four times in $\cT' \cup \{(*')\}$, and therefore at least three times in $\cT'$. But the only variable which could possibly appear at least three times in $\cT'$ is $x_i$ (since $x_i$ appears three times in $\cT_1$, and every other variable in $\cT_1$ appears at most twice), and this requires $\cT'$ to contain the full head of $\cT_1$. Furthermore, since this variable has to appear four times in $\cT' \cup \{(*')\}$, it must also appear in $(*')$. 

    So we have shown that $\cT'$ contains the full head of $\cT_1$ and that it forms a minimal implication producing a difference equality $(*')$ containing $x_i$, which means $\cT'$ is also a box. This means $\cT'$ is a box with the same head as $\cT_1$ and strictly smaller tail (it cannot also have the same tail as $\cT_1$ because we assumed $\cT_1 \not\subseteq \cT_2$), showing that $\cT_1$ is fluffy. 
\end{proof}

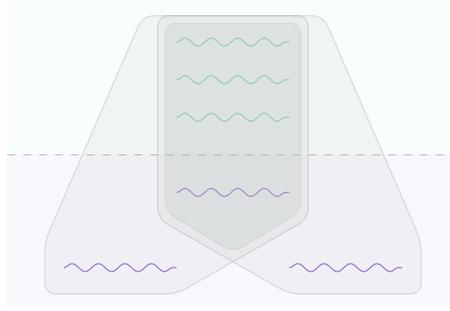
\begin{figure}[ht]
    \begin{tikzpicture}
        \fill [DarkSlateGray3!5] (-3, 0) rectangle (3, 2.1);
        \fill [MediumPurple3!5] (-3, 0) rectangle (3, -2);
        \foreach \i\j in {0/0.5, 0/1, 0/1.5} {
            \draw [DarkSlateGray3, decorate, decoration = {snake, amplitude = 1.5pt}] (\i - 0.75, \j) to (\i + 0.75, \j);
        }
        \foreach \i\j in {0/-0.5, -1.5/-1.5, 1.5/-1.5} {
            \draw [MediumPurple3, decorate, decoration = {snake, amplitude = 1.5pt}] (\i - 0.75, \j) to (\i + 0.75, \j);
        }
        \draw [gray!50, dashed] (-3, 0) -- (3, 0);
        \filldraw [gray, rounded corners, fill opacity = 0.07, draw opacity = 0.3] (1, 1.85) -- (1, -0.85) -- (-0.75, -1.85) -- (-2.5, -1.85) -- (-2.5, -1.15) -- (-1.2, 1.85) -- cycle;
        \begin{scope}[xscale = -1]
            \filldraw [gray, fill opacity = 0.07, rounded corners, draw opacity = 0.3] (1, 1.85) -- (1, -0.85) -- (-0.75, -1.85) -- (-2.5, -1.85) -- (-2.5, -1.15) -- (-1.2, 1.85) -- cycle;
        \end{scope}
        \filldraw [Honeydew3, rounded corners, fill opacity = 0.3, draw opacity = 0.6] (0.9, 1.75) -- (0.9, -0.75) -- (0, -1.3) -- (-0.9, -0.75) -- (-0.9, 1.75) -- cycle;
    \end{tikzpicture}
    \caption{Two intersecting boxes $\cT_1$ and $\cT_2$ shown in gray, and their intersection $\cT'$ shown in green; $\cT'$ shows that $\cT_1$ is fluffy. (In this case $\cT_2$ is also fluffy.)}
\end{figure}

\begin{proof}[Proof of Lemma \ref{lem:representatives}]
    Define $\cS'$ to consist of $\cS$ as well as every difference equality produced by a non-fluffy box. Claim \ref{claim:non-fluffy-disjoint} means that all non-fluffy boxes are disjoint (and each produces only one difference equality), and since each has a head of size $3$, there are at most $\abs{\cS}/3$ of them. So we have $\sabs{\cS'} \leq 4\abs{\cS}/3 \leq 2\abs{\cS}$. 

    To see that $\cS'$ has the desired `representative' property, consider a difference equality $(*)$ containing $x_i$ which is implied by $\cS \cup \cP$, and consider the minimal implication in $\cS \cup \cP$ that produces it. If this minimal implication has exactly one equation from $\cS$, then we are done (because $\cS \subseteq \cS'$). Otherwise this minimal implication is a box $\cT$. 
    
    If $\cT$ is non-fluffy, then $(*)$ is in $\cS'$ and we are done. Otherwise, let $\cT'$ be the smallest box with the same head as $\cT$ and whose tail is contained in that of $\cT$. Then $\cT'$ is non-fluffy, so the difference equality $(*')$ that it produces is in $\cS'$. Furthermore, Lemma \ref{lem:box-subbox} means that $\{(*')\} \cup (\cT \setminus \cT')$ minimally implies $(*)$. And $\cT'$ has the same head as $\cT$, so we have $\cT \setminus \cT' \subseteq \cP$; this means $\{(*')\} \cup \cP$ implies $(*)$, as desired. 
\end{proof}

\begin{figure}[ht]
    \begin{tikzpicture}
        \fill [DarkSlateGray3!5] (-3, 0) rectangle (12, 2.1);
        \fill [MediumPurple3!5] (-3, 0) rectangle (12, -2);
        \foreach \i\j in {0/0.5, 0/1, 0/1.5, 3/1.25, 3/0.75, 7.75/1.25, 7.75/0.75} {
            \draw [DarkSlateGray3, decorate, decoration = {snake, amplitude = 1.5pt}] (\i - 0.75, \j) to (\i + 0.75, \j);
        }
        \foreach \i\j in {0/-0.5, -1.5/-1.5, 1.5/-1.5, 4/-1.5, 7/-1.5, 7.75/-0.75} {
            \draw [MediumPurple3, decorate, decoration = {snake, amplitude = 1.5pt}] (\i - 0.75, \j) to (\i + 0.75, \j);
        }
        \foreach \i\j in {0/2.5, 5.5/2.5, 10/2.5} {
            \draw [Honeydew3, decorate, decoration = {snake, amplitude = 1.5pt}] (\i - 0.75, \j) to (\i + 0.75, \j);
            \draw [Honeydew3, ->] (\i, 1.7) -- (\i, 2.3);
        }
        \draw [gray!50, dashed] (-3, 0) -- (12, 0);
        \filldraw [gray, rounded corners, fill opacity = 0.07, draw opacity = 0.3] (1, 1.85) -- (1, -0.85) -- (-0.75, -1.85) -- (-2.5, -1.85) -- (-2.5, -1.15) -- (-1.2, 1.85) -- cycle;
        \begin{scope}[xscale = -1]
            \filldraw [gray, fill opacity = 0.07, rounded corners, draw opacity = 0.3] (1, 1.85) -- (1, -0.85) -- (-0.75, -1.85) -- (-2.5, -1.85) -- (-2.5, -1.15) -- (-1.2, 1.85) -- cycle;
        \end{scope}
        \filldraw [Honeydew3, rounded corners, fill opacity = 0.3, draw opacity = 0.6] (0.9, 1.75) -- (0.9, -0.75) -- (0, -1.3) -- (-0.9, -0.75) -- (-0.9, 1.75) -- cycle;
        \begin{scope}[xshift = 5.5cm]
            \foreach \i\j in {0/0.5, 0/1, 0/1.5} {
                \draw [DarkSlateGray3, decorate, decoration = {snake, amplitude = 1.5pt}] (\i - 0.75, \j) to (\i + 0.75, \j);
            }
            \foreach \i\j in {0/-0.5} {
                \draw [MediumPurple3, decorate, decoration = {snake, amplitude = 1.5pt}] (\i - 0.75, \j) to (\i + 0.75, \j);
            }
            \filldraw [gray, fill opacity = 0.07, draw opacity = 0.3] (-1, -0.85) rectangle (1, 1.85);
            \filldraw [Honeydew3, fill opacity = 0.3, draw opacity = 0.6] (-0.9, 0.25) rectangle (0.9, 1.75);
        \end{scope}
        \begin{scope}[xshift = 10cm]
            \foreach \i\j in {0/0.5, 0/1, 0/1.5} {
                \draw [DarkSlateGray3, decorate, decoration = {snake, amplitude = 1.5pt}] (\i - 0.75, \j) to (\i + 0.75, \j);
            }
            \foreach \i\j in {0/-0.5, 0/-1} {
                \draw [MediumPurple3, decorate, decoration = {snake, amplitude = 1.5pt}] (\i - 0.75, \j) to (\i + 0.75, \j);
            }
            \filldraw [Honeydew3, fill opacity = 0.3, draw opacity = 0.6] (-1, -1.35) rectangle (1, 1.85);
        \end{scope}
    \end{tikzpicture}
    \caption{In this configuration of boxes (with fluffy boxes shown in gray and non-fluffy boxes in green), $\cS'$ would consist of the $13$ equations in $\cS$ (shown as blue squiggles) and the three equations produced by the fluffy boxes (shown as green squiggles).}
\end{figure}
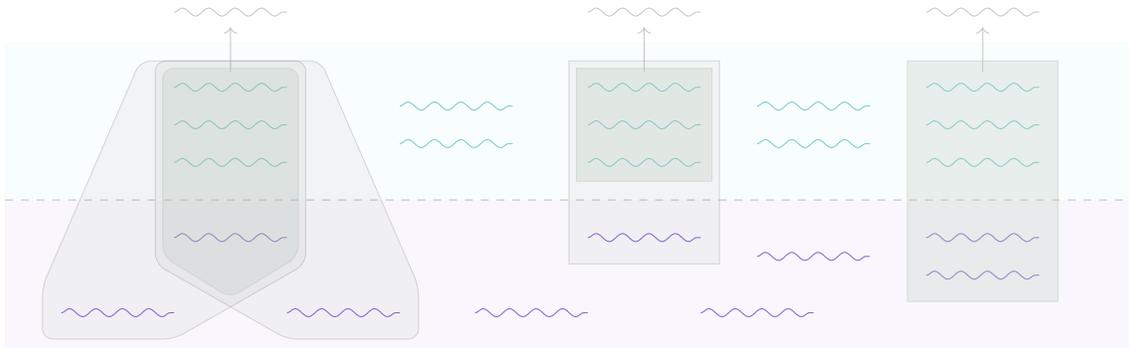

\subsection{Adding a single equation to a star} \label{subsec:indiv-eqn}

In this subsection, we prove Lemma \ref{lem:indiv-eqn}. First note that any equation implied by $\cP$ is of the form 
\begin{equation}
    \alpha_1(x_1 + x_2) + \alpha_2(x_3 + x_4) + \cdots + \alpha_p(x_{2p - 1} + x_{2p}) = 0 \label{eqn:impl-by-p}
\end{equation} 
for $\alpha_1 + \cdots + \alpha_p = 0$. So we want to consider all ways to obtain a difference equality $(*')$ by adding $(*)$ to an equation of this form; and our goal is to show that at most three variables $x_j$ can appear with opposite sign as $x_i$ in such an equation $(*')$. (\emph{A priori} we could be allowed to scale $(*)$ as well when taking our linear combination, but since $x_i$ must have coefficient $\pm 1$ in both $(*)$ and $(*')$, we cannot scale except by $\pm 1$.)

For each $1 \leq j \leq p$, we say $x_{2j - 1}$ and $x_{2j}$ are \emph{opposites}. We now perform casework based on how many variables in $(*)$ are part of the star (i.e., are among $x_1$, \ldots, $x_{2p}$). 

\case{1}{The three variables in $(*)$ are all part of the star} First we claim that none of these three variables can be opposites. Assume for contradiction that two are; then without loss of generality, we can assume $(*)$ contains $x_1$, $x_2$, $x_3$, and $x_i$. But then $\cS \cup \cP$ implies both $(*)$ and the difference equality $x_1 + x_2 = x_3 + x_4$, which share three variables; this contradicts Claim \ref{claim:2-eqns-5-vars}. 

Now we can assume without loss of generality that $(*)$ is the equation \[x_i - x_1 - x_3 + x_5 = 0.\] Then $(*')$ must contain either $-x_1$ or $+x_2$, either $-x_3$ or $+x_4$, and either $+x_5$ or $-x_6$. (Here and in the following cases, when we talk about the signs of coefficients in $(*')$, we assume that $x_i$ has coefficient $+1$.) In particular, this accounts for all three variables other than $x_i$ that appear in $(*')$, so no variables other than these six can appear in $(*')$; and of these six, only $x_1$, $x_3$, and $x_6$ can appear with coefficient $-1$. So $\{(*)\} \cup \cP$ can only certify the three pairs $(i, 1)$, $(i, 3)$, and $(i, 6)$. 

\case{2}{Exactly two variables in $(*)$ are part of the star, and they are opposites} Then we can assume without loss of generality that $(*)$ contains the variables $x_i$, $x_1$, $x_2$, and $x_j$ for some $j > 2p$. It cannot be the equation $x_i - x_1 - x_2 + x_j = 0$ (because we assumed $\cS \cup \cP$ does not imply any equation of this form), so we can assume without loss of generality that it is \[x_i - x_1 - x_j + x_2 = 0.\] Then $x_i$ and $x_j$ both have to appear in $(*')$. If one of $x_1$ and $x_2$ did not appear in $(*')$, then the other would have coefficient $\pm 2$, which is not allowed. So both must appear, and they must have the same coefficients as in $(*)$. This means $(*')$ is the same as $(*)$; so $\{(*)\} \cup \cP$ can only certify $(i, 1)$ and $(i, j)$ (if $i > j$).

\case{3}{Exactly two variables in $(*)$ are part of the star, and they are not opposites} Then we can assume those two variables are $x_1$ and $x_3$, so $(*)$ is either of the form \[x_i - x_1 - x_3 + x_j = 0 \quad \text{or} \quad x_i - x_1 + x_3 - x_j = 0\] for some $j > 2p$. In the first case, $(*')$ must contain $+x_j$, either $-x_1$ or $+x_2$, and either $-x_3$ or $+x_4$. It also contains $+x_i$, and it cannot contain three variables with coefficient $+1$; so $(*')$ has to be the same as $(*)$, and $\{(*)\} \cup \cP$ can only certify $(i, 1)$ and $(i, j)$. 

In the second case, $(*')$ must contain $-x_j$, either $-x_1$ or $+x_2$, and either $+x_3$ or $-x_4$ (in addition to $+x_i$). So the only possibility for $(*')$ other than $(*)$ itself is \[x_i + x_2 - x_4 - x_j = 0.\] This means $\{(*)\} \cup \cP$ can only certify $(i, 1)$, $(i, 4)$, and $(i, j)$. 

\case{4}{At most one variable in $(*)$ is part of the star} Then the three variables in $(*)$ which are not part of the star, including $x_i$, must all appear in $(*')$ with the same coefficients as in $(*)$. This means the fourth must as well --- getting rid of it by adding an equation of the form \eqref{eqn:impl-by-p} would introduce at least three additional variables (which are part of the star but not contained in $(*)$), which would cause $(*')$ to have more than four variables. So $\{(*)\} \cup \cP$ again does not imply any difference equalities containing $x_i$ other than $(*)$ itself, which means it certifies at most two pairs $(i, \bullet)$. 

So in all cases, $\{(*)\} \cup \cP$ certifies at most three pairs $(i, \bullet)$, as desired. 

\section{Adaptation to odd \texorpdfstring{$k$}{k}} \label{sec:odd-k}

We have now completed the proof of Theorem \ref{thm:main}. In this section, we briefly explain how to adapt this proof to get Proposition \ref{prop:odd-k}. For this, we use the same random construction (in other words, we use Lemma \ref{lem:random-constr} directly); meanwhile, we need to replace Lemma \ref{lem:good-certify} with the following statement. 

\begin{lemma} \label{lem:good-certify-odd}
    Suppose that $c$ is sufficiently close to $2$ and that $k$ is odd. Then every $c$-good $k$-configuration certifies at most $(k - 1)(k - 3)/4 + 3$ pairs.
\end{lemma}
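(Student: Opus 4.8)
The plan is to run the same three-step stability argument as in the proof of Lemma \ref{lem:good-certify}, now with $k-1$ (the largest even number at most $k$) playing the role that $k$ did there, and with one variable left over; the target $\tfrac{(k-1)(k-3)}{4}+3$ is exactly the number of pairs certified by a star of size $k-1$ together with one difference equality involving the leftover variable. Let $d$ be the dimension of the solution space of the given $c$-good $k$-configuration $\cC$, so that $\cC$ implies $r:=k-d\le (k-1)/c$ independent equations; since $2-c$ is tiny, in fact $r\le (k-1)/2$. Unlike in the even case, the baseline bound of Lemma \ref{lem:baseline} --- which gives at most $(k-d)(k-d+1)\le\tfrac{k-1}{2}\cdot\tfrac{k+1}{2}=\tfrac{k^2-1}{4}$ pairs --- no longer suffices on its own (it overshoots the target by $\Theta(k)$), although it is already enough when $r\le\tfrac{k-3}{2}$, so I may assume $r$ is essentially $\tfrac{k-1}{2}$.

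For large $k$ (i.e. $k\ge k_0$) I would run the stability dichotomy. If $\cC$ does not imply a star of size at least $(1-17\eps)(k-1)$, the index-by-index argument of Lemma \ref{lem:stability} (Cases 1 and 2, essentially verbatim) bounds the number of certified pairs by $(1-\eps^2/8)\cdot\tfrac{k^2-1}{4}$, and since $k\ge k_0\ge 32/\eps^2$ and $2-c$ is negligible this is at most $\tfrac{(k-1)(k-3)}{4}+3$. Otherwise $\cC$ implies a largest star, which (stars having even size) has size $2p$ with $(1-17\eps)(k-1)\le 2p\le k-1$, leaving $k-2p\ge1$ variables outside it; by Lemma \ref{lem:within-star} exactly $p^2-p$ pairs are certified among the star variables. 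If $2p\le k-3$, then as in the proof of Lemma \ref{lem:huge-star} each of the at least two leftover variables $x_i$ certifies at most $144\eps p\le p/2$ pairs $(i,\bullet)$ via Lemma \ref{lem:outside-star} (whose hypothesis holds because the star is maximal), for a total of at most $p^2-p+(k-2p)\tfrac{p}{2}=p(\tfrac{k}{2}-1)\le\tfrac{(k-3)(k-2)}{4}\le\tfrac{(k-1)(k-3)}{4}$.

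The delicate case --- and the one I expect to be the main obstacle --- is $2p=k-1$: here the star uses every variable but one, say $x_k$, the within-star contribution is exactly $p^2-p=\tfrac{(k-1)(k-3)}{4}$, and there is a budget of only $3$ left for the pairs $(k,\bullet)$, so Lemma \ref{lem:outside-star}'s bound of $6\abs{\cS}$ is too weak. The fix is to show that a $c$-good $k$-configuration implying a star $\cP$ of size $k-1$ can imply \emph{at most one} independent difference equality containing $x_k$: if it implied two independent such equalities $g_1,g_2$, consider the sub-star $\cP_V$ of $\cP$ on the at most six star-pairs touched by $g_1$ and $g_2$; then $\cP_V\cup\{g_1,g_2\}$ is implied by $\cC$, hence $c$-good, and (provided $g_1-g_2$ does not lie in the span of $\cP_V$, which a short case check on signs --- using validity and collinearity-freeness of $\cC$ --- should rule out) it has rank $\abs V+1$ while involving only $2\abs V+1$ variables, so $c$-goodness forces $2\abs V+1\ge c(\abs V+1)+1$, i.e. $\abs V\ge c/(2-c)>6$, a contradiction. (Morally: a sub-star is very efficient --- two variables per equation --- and hanging two more equations on $x_k$ off it violates $c$-lightness.) Granted this, every pair $(k,\bullet)$ certified by $\cC$ is certified by $\{g_1\}\cup\cP$ for a single difference equality $g_1$, so Lemma \ref{lem:indiv-eqn} caps it at $3$ and the total is at most $\tfrac{(k-1)(k-3)}{4}+3$.

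Finally, for $k<k_0$ (where the stability step is unavailable), I would instead project $\cC$ onto the $(k-1)$-configuration $\cC^-$ of difference equalities it implies on $x_1,\dots,x_{k-1}$, which is again $c$-good; Lemma \ref{lem:good-certify} then bounds the pairs $(i,j)\in[k-1]^2$ it certifies --- these are exactly the ones $\cC$ certifies --- by $\tfrac{(k-1)(k-3)}{4}$, and it remains to bound the number of pairs $(k,\bullet)$ by $3$, which when $2-c$ is negligible compared to $k_0$ follows from the same sub-collection counting as above (now without presupposing a star: $c$-goodness with $2-c$ this small leaves room for at most one independent difference equality on $x_k$ beyond those forced by $\cC^-$, after which Lemma \ref{lem:baseline-technical} or a single application of the bound in Lemma \ref{lem:indiv-eqn}-type casework finishes). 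Throughout, the one genuinely new ingredient over the even case is controlling the single leftover variable in the tight huge-star case, which forces one to invoke the definition of $c$-goodness for \emph{all} sub-collections, not merely for the maximal system implied by $\cC$.
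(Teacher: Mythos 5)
Your large-$k$ argument is essentially the paper's: the stability dichotomy, the $2p\le k-3$ case via Lemma \ref{lem:outside-star}, and, in the tight case $2p=k-1$, the claim that only one difference equality at the leftover variable can exist modulo the star is exactly the paper's Lemma \ref{lem:huge-star-only-one}, proved the same way (count the $\le 6$ touched star-pairs, get $t+1$ independent equations on $2t+1$ variables, contradict $c$-lightness); your hedge about ``$g_1-g_2$ not in the span of $\cP_V$'' is unnecessary, since if it were in that span then $g_2$ would already be implied by $\{g_1\}\cup\cP$, which is the desired conclusion rather than something to rule out. The only structural difference is that the paper applies this ``$+3$'' treatment to one designated variable $x_{2p+1}$ in every huge-star case, while you invoke it only when $2p=k-1$; both accountings give $p(k-3)/2+3$.

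The genuine gap is your $k<k_0$ case. The paper handles it by noting that $c$-good implies $2$-good in this range and citing the already-proved $2$-good odd-$k$ bound (\cite[Lemma 3.1]{Das23}); your proposed substitute — project to the $c$-good $(k-1)$-configuration $\cC^-$, bound the pairs inside $[k-1]$ by $(k-1)(k-3)/4$ via Lemma \ref{lem:good-certify}, and separately bound the pairs $(k,\bullet)$ by $3$ — does not work, because the second bound is false without a huge star. Concretely, for $k=7$ take $\cC=\{x_7-x_1-x_2+x_3=0,\ x_7+x_1-x_4-x_5=0,\ x_7+x_3-x_4-x_6=0\}$ (the cube-minus-a-vertex of Figure 7): it is $2$-good (hence $c$-good), has $r=3=(k-1)/2$ so your first-paragraph reduction does not exclude it, and it certifies the five pairs $(7,1),(7,2),(7,4),(7,5),(7,6)$. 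Note that here every difference equality containing $x_7$ \emph{is} implied by a single one together with $\cC^-$, so even the ``at most one independent equality at $x_k$ beyond $\cC^-$'' formulation holds — yet the conclusion ``at most $3$ pairs $(k,\bullet)$'' still fails, because the cap of $3$ in Lemma \ref{lem:indiv-eqn} comes from the rigidity of a star $\cP$, not from an arbitrary $c$-good $\cC^-$. The total count in this example is $5+4=9=(k-1)(k-3)/4+3$, so the lemma itself is fine, but only because a surplus at $x_k$ is compensated by a deficit inside $[k-1]$; an additive decomposition with each part bounded separately by its own extremum cannot capture this trade-off. To close the gap, either argue as the paper does (reduce to $2$-good and invoke \cite[Lemma 3.1]{Das23}) or carry out a genuine joint accounting for small odd $k$.
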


This bound is sharp --- the $k$-configuration \[\{x_1 + x_2 = x_3 + x_4 = \cdots = x_{k - 2} + x_{k - 1}, \, x_k - x_1 = x_3 - x_5\},\] consisting of a star of size $k - 1$ and one extra equation involving $x_k$, certifies exactly $(k - 1)(k - 3)/4 + 3$ pairs (the ones certified by the star, as well as $(k, 1)$, $(k, 3)$, and $(k, 6)$). 

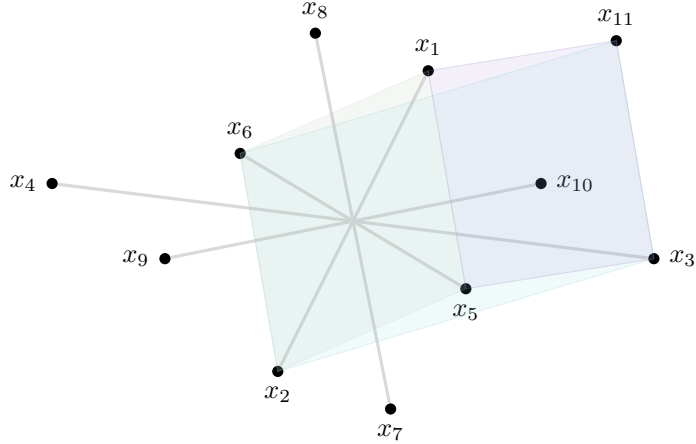
\begin{figure}[ht]
    \begin{tikzpicture}
        \node [sdot, label = above: {$x_1$}] (1) at (1, 2) {};
        \node [sdot, label = right: {$x_3$}] (3) at (4, -0.5) {};
        \node [sdot, label = below: {$x_5$}] (5) at (1.5, -0.9) {};
        \node [sdot, label = below: {$x_7$}] (7) at (0.5, -2.5) {};
        \node [sdot, label = left: {$x_9$}] (9) at (-2.5, -0.5) {};
        \foreach \i\j\k in {1/2/below, 3/4/left, 5/6/above, 7/8/above, 9/10/right} {
            \node [sdot, label = \k: {$x_{\j}$}] (\j) at ($(0, 0) - (\i)$) {};
            \draw [gray!30, very thick] (\i) -- (\j);
        }
        \node [sdot, label = above: {$x_{11}$}] (k) at ($(1) + (3) - (5)$) {};
        \foreach \i\j\k\l\c\p in {k/1/5/3/MediumPurple3/0.1, k/6/2/3/DarkSlateGray3/0.1, 1/5/2/6/Honeydew3/0.15} {
            \filldraw [\c, draw opacity = 0.25, fill opacity = \p] (\i.center) -- (\j.center) -- (\k.center) -- (\l.center) -- cycle;
        }
    \end{tikzpicture}
    \caption{An illustration of the equality case in Lemma \ref{lem:good-certify-odd} for $k = 11$.}
\end{figure}

The proof of Lemma \ref{lem:good-certify-odd} is mostly the same as the proof of Lemma \ref{lem:good-certify} given in Section \ref{sec:backbone}, but it requires one extra ingredient for the huge-star case. 

\begin{lemma} \label{lem:huge-star-only-one}
    Suppose that $\cC$ implies a star $\{x_1 + x_2 = x_3 + x_4 = \cdots = x_{2p - 1} + x_{2p}\}$, and a difference equality $(*)$ on $x_1$, \ldots, $x_{2p + 1}$ which contains $x_{2p + 1}$. Then every other difference equality on $x_1$, \ldots, $x_{2p + 1}$ which contains $x_{2p + 1}$ and is implied by $\cC$ is actually implied by $(*)$ combined with the star. 
\end{lemma}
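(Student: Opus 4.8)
The plan is to mimic the proof of Lemma~\ref{lem:within-star}: assume for contradiction that there is a difference equality $(**)$ on $x_1,\ldots,x_{2p+1}$ that contains $x_{2p+1}$, is implied by $\cC$, is distinct from $(*)$, and is \emph{not} implied by $(*)$ together with the star; then extract a small, highly constrained system of independent equations implied by $\cC$ to contradict $c$-goodness. Throughout, $\cC$ is a $c$-good $k$-configuration, so every difference equality it implies has four distinct variables; in particular $(*)$ and $(**)$ each involve $x_{2p+1}$ and exactly three further variables, whose index sets we call $A,B\subseteq[2p]$ respectively, with $|A|=|B|=3$. Call $x_{2j-1}$ and $x_{2j}$ \emph{opposites} for $1\le j\le p$, as in the proof of Lemma~\ref{lem:within-star}. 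Since $(*)$ and $(**)$ are distinct difference equalities, they cannot be supported on the same four variables (that would violate validity, as in Example~\ref{ex:c-good}\ref{ex-item:2-eqns-4-vars}), so $|A\cup B|\ge 4$. Let $V$ be the union of $A\cup B$ with the opposites of all its elements; then $V$ is a union of $m$ opposite-pairs, where $2\le m\le 6$.

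Next I would exhibit the system. The star implies the $m-1$ independent equations asserting that each of the $m$ opposite-pairs in $V$ has the same sum as a fixed reference pair among them; these involve only the $2m$ variables of $V$. Together with $(*)$ they form $m$ independent equations, since $(*)$ is the only one of them involving $x_{2p+1}$. Adjoining $(**)$ keeps the system independent: if $(**)$ lay in the span of these $m$ equations, then --- since the $m-1$ pair-sum equations are implied by the star alone and $(*)$ is present --- $(**)$ would be implied by $(*)$ combined with the star, contradicting our assumption. (Concretely, after cancelling the contribution of $x_{2p+1}$, which has coefficient $\pm1$ in both $(*)$ and $(**)$, such a dependence would express $(**)$ using $(*)$ and the star.) Hence $\cC$ implies $m+1$ independent equations on the $2m+1$ variables of $V\cup\{x_{2p+1}\}$.

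Finally, the contradiction: $c$-goodness (specifically $c$-lightness) forces $2m+1\ge c(m+1)+1$, i.e. $c\le 2m/(m+1)=2-2/(m+1)\le 2-2/7$, which is impossible since $c$ is close to $2$. The only step requiring care is the independence claim in the middle paragraph, i.e.\ that adjoining $(**)$ genuinely produces a new independent equation rather than a redundancy; but this is exactly where the hypothesis that $(**)$ is not implied by $(*)$-together-with-the-star is used, and it goes through cleanly because only $(*)$ and $(**)$ among the equations in play mention $x_{2p+1}$. Everything else is bookkeeping essentially identical to the proof of Lemma~\ref{lem:within-star}.
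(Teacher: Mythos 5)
Your proposal is correct and follows essentially the same route as the paper: take the variables of $(*)$ and $(**)$ together with their opposites (and $x_{2p+1}$), note the star supplies the pair-sum equations on this set, and contradict $c$-lightness with $m+1$ independent equations on $2m+1 \leq 13$ variables. Your extra care in verifying independence of the adjoined equality $(**)$ — which is exactly where the hypothesis that $(**)$ is not implied by $(*)$ plus the star enters — is the same reasoning the paper leaves implicit.
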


\begin{proof}
    Assume for contradiction that $\cC$ implies some difference equality $(*')$ on $x_1$, \ldots, $x_{2p + 1}$ which contains $x_{2p + 1}$, such that $(*')$ is not implied by $(*)$ together with the star. 

    Consider all variables which appear in $(*)$ or $(*')$ other than $x_{2p + 1}$, as well as their opposites on the star (where we say $x_{2j - 1}$ and $x_{2j}$ are opposites for $1 \leq j \leq p$). Together with $x_{2p + 1}$, this forms a set of $2t + 1$ variables for some $t \leq 6$ (since $(*)$ and $(*')$ together contain at most six variables other than $x_{2p + 1}$). 

    But the star implies $t - 1$ independent equations on these variables, namely the equations stating that each pair of opposites has equal sum. (For example, if $(*)$ and $(*')$ were $x_{2p + 1} - x_1 - x_3 + x_5 = 0$ and $x_{2p + 1} - x_2 - x_8 + x_{10} = 0$, then we would consider the $11$ variables $x_1$, \ldots, $x_{10}$, $x_{2p + 1}$, and these $t - 1 = 4$ equations would be $x_1 + x_2 = x_3 + x_4$, $x_1 + x_2 = x_5 + x_6$, $x_1 + x_2 = x_7 + x_8$, and $x_1 + x_2 = x_9 + x_{10}$.)
    
    So together with $(*)$ and $(*')$, we get a collection of $(t - 1) + 2 = t + 1$ independent equations on $2t + 1$ variables. But because $\cC$ is $c$-good, any $t + 1$ independent equations that it implies must contain at least $c(t + 1) + 1$ variables. Since $t \leq 6$, this is a contradiction (for $c$ sufficiently close to $2$). 
\end{proof}

\begin{proof}[Proof of Lemma \ref{lem:good-certify-odd}]
    We define parameters $\eps$, $k_0$, and $c$ in the same way as in Section \ref{sec:backbone} --- we choose $\eps$ to be a small absolute constant, $k_0$ to be large with respect to $\eps$, and $c$ such that $2 - c$ is small with respect to $k_0$ and $\eps$. (As in Section \ref{sec:backbone}, $\eps$ is the error parameter for the stability argument, and $k_0$ quantifies what we mean when we say that $k$ is large with respect to $\eps$.)

    First, if $k < k_0$, then every $c$-good $k$-configuration is also $2$-good (this follows from the same argument as in the proof of Claim \ref{claim:c-to-2}, since we chose $2 - c$ to be small with respect to $k_0$). And \cite{Das23} proved that Lemma \ref{lem:good-certify-odd} holds for $2$-good $k$-configurations (this is \cite[Lemma 3.1]{Das23}, and is where the bound \eqref{eqn:das-odd-quadratic-upper} comes from). So from now on, we will assume that $k \geq k_0$, i.e., that $k$ is large with respect to $\eps$. 

    As in Section \ref{sec:backbone}, we split into cases based on whether $\cC$ does or does not imply a huge star, specifically a star of size at least $(1 - 17\eps)k$. 
    
    If $\cC$ does not imply any star of size at least $(1 - 17\eps)k$, then we can use Lemma \ref{lem:stability} directly (Lemma \ref{lem:stability} does not require $k$ to be even); this gives that $\cC$ certifies at most \[\left(1 - \frac{\eps^2}{8}\right)\frac{k^2}{c^2} \leq \frac{(k - 1)(k - 3)}{4} + 3\] pairs (since $k$ is large and $2 - c$ is small with respect to $\eps$). 
    
    Now suppose that $\cC$ does imply a huge star; let $2p$ be the size of the largest star that $\cC$ implies, so that $(1 - 17\eps)k \leq 2p \leq k - 1$. By renaming variables, we can assume that this star is \[\{x_1 + x_2 = x_3 + x_4 = \cdots = x_{2p - 1} + x_{2p}\}.\] Let $\cP = \{x_1 + x_2 = x_3 + x_4, \, x_1 + x_2 = x_5 + x_6, \, \ldots, \, x_1 + x_2 = x_{2p - 1} + x_{2p}\}$, so that $\cP$ is a collection of difference equalities defining this star. 
    
    First, by Lemma \ref{lem:within-star}, the only pairs $(i, j) \in [2p]^2$ that $\cC$ certifies are the ones certified by the star itself, and there are $p^2 - p$ such pairs. 
    
    Next, Lemmas \ref{lem:indiv-eqn} and \ref{lem:huge-star-only-one} together mean that $\cC$ certifies at most three pairs $(2p + 1, \bullet)$. Explicitly, if $\cC$ certifies at least one pair $(2p + 1, \bullet)$, then it implies some difference equality $(*)$ on $x_1$, \ldots, $x_{2p + 1}$ containing $x_{2p + 1}$. Then Lemma \ref{lem:huge-star-only-one} means that \emph{every} difference equality of this form implied by $\cC$ is actually implied by $\{(*)\} \cup \cP$; and Lemma \ref{lem:indiv-eqn} means that $\{(*)\} \cup \cP$ certifies at most three pairs $(2p + 1, \bullet)$. 

    Finally, we can bound the number of pairs $(i, \bullet)$ certified by $\cC$ for each $i \geq 2p + 2$ in the same way as in the proof of Lemma \ref{lem:huge-star} (the huge-star case when $k$ is even): Fix $i$, and let $\cS$ be a maximal collection of difference equalities on $x_1$, \ldots, $x_i$ containing $x_i$ such that $\cS \cup \cP$ is independent. The fact that $\cC$ is $c$-good means that $c(\abs{\cS} + \abs{\cP}) + 1 \leq k$, and therefore \[\abs{\cS} \leq \frac{k - 1}{c} - \frac{(1 - 17\eps)k}{2} + 1 \leq 10\eps k \leq 24\eps p.\] So Lemma \ref{lem:outside-star} gives that $\cS \cup \cP$ (and therefore $\cC$) certifies at most $144\eps p \leq p/2$ pairs $(i, \bullet)$. 

    Finally, putting these bounds together, the total number of pairs that $\cC$ certifies is at most \[(p^2 - p) + 3 + (k - 2p - 1) \cdot \frac{p}{2} = p\left(\frac{k - 3}{2}\right) + 3,\] and plugging in $p \leq (k - 1)/2$ gives the desired bound. 
\end{proof}

\section*{Acknowledgements}

The author thanks Noah Kravitz, Joe Gallian, and Colin Defant for helpful advice and feedback. This work originated from a project the author worked on at the University of Minnesota Duluth REU in 2023; the author is grateful to Jane Street Capital, the National Security Agency, and the CYAN Undergraduate Mathematics Fund at MIT for funding during that time. 

\bibliography{refs}{}
\bibliographystyle{plain}

\appendix 

\section{A modification of Behrend's construction} \label{app:modified-behrend}

In this section, we prove Lemma \ref{lem:modified-behrend}, which states that we can construct subsets of $\{1, 2, \ldots, n\}$ of size $n^{1 - o(1)}$ avoiding all solutions to $\alpha s_1 + \beta s_2 + \gamma s_3 = 0$ for `small' integers $\alpha$, $\beta$, and $\gamma$ with sum $0$. We will use the same construction that Behrend \cite{Beh46} used to produce large $3$-AP-free sets --- taking a high-dimensional sphere and projecting it down to $\ZZ$. The main idea is that solutions to $\alpha s_1 + \beta s_2 + \gamma s_3 = 0$ will correspond to collinear triples on the sphere, which cannot exist. 

Assume that $n$ is sufficiently large, and fix parameters $d = \sfloor{\sqrt{\log n}}$ and $m = \sfloor{e^{\sqrt{\log n}}/16\kappa}$. 

First, for any $\vec{v} = (v_1, \ldots, v_d) \in [m]^d$, we have $v_1^2 + \cdots + v_d^2 \leq dm^2$. So we can choose some $1 \leq r \leq dm^2$ for which the set \[\mathbb{S} = \{\vec{v} \in [m]^d \mid v_1^2 + \cdots + v_d^2 = r\}\] has size at least $m^d/dm^2 = m^{d - 2}/d$. Now let $\varphi \colon \ZZ^d \to \ZZ$ be the map \[\vec{v} \mapsto v_1 + (16\kappa m)v_2 + (16\kappa m)^2v_3 + \cdots + (16\kappa m)^{d - 1}v_d.\] Then $\varphi$ is injective on $[m]^d$ (by the uniqueness of base-$16\kappa m$ expansion), and for all $\vec{v} \in [m]^d$, we have \[1 \leq \varphi(\vec{v}) \leq (16\kappa m)^d \leq e^{\sqrt{\log n} \cdot \sqrt{\log n}} = n.\] 

Now take $S$ to be $\varphi(\mathbb{S})$. First we will show that $S$ does avoid the linear patterns that we wish to avoid. Assume not; this means there exist distinct points $\vec{u}, \vec{v}, \vec{w} \in \mathbb{S}$ such that \[\alpha \varphi(\vec{u}) + \beta \varphi(\vec{v}) + \gamma \varphi(\vec{w}) = 0\] for some nonzero $\alpha, \beta, \gamma \in \ZZ$ of magnitude at most $\kappa$ with sum $0$. Plugging in the definition of $\varphi$, this means \[\sum_{i = 1}^d (16\kappa m)^{i - 1}(\alpha u_i + \beta v_i + \gamma w_i) = 0.\] But we have $-3\kappa m \leq \alpha u_i + \beta v_i + \gamma w_i \leq 3\kappa m$ for each $i$, so by the uniqueness of base-$16\kappa m$ expansion, this means we must have \[\alpha u_i + \beta v_i + \gamma w_i = 0\] for all $i$. But this means $\vec{u}$, $\vec{v}$, and $\vec{w}$ are collinear, which is impossible because $\vec{u}$, $\vec{v}$, and $\vec{w}$ lie on a sphere (and a line can only intersect a given sphere at most twice). 

So we have shown that $S$ indeed avoids the forbidden linear patterns; it remains to show that $\abs{S} = n^{1 - o(1)}$, or equivalently that $\log \abs{S} = (1 - o(1))\log n$. For this, we have $\abs{S} = \abs{\mathbb{S}} \geq m^{d - 2}/d$, which means \[\log \abs{S} \geq (d - 2)\log m - \log d.\] Since $d \geq \sqrt{\log n} - 1$ and $m \geq e^{\sqrt{\log n}}/32\kappa$, we can write $(d + 1)\log(32\kappa m) \geq \sqrt{\log n} \cdot \sqrt{\log n} = \log n$, so 
\begin{align*}
    \log \abs{S} &\geq (d + 1)\log(32\kappa m) - (d + 1)\log 32\kappa - 3\log m - \log d \\
    &\geq \log n - (d + 1)\log(32\kappa) - 3\log m - \log d.
\end{align*} 
Each of the terms being subtracted is $O(\sqrt{\log n})$ (with the implicit constant depending on $\kappa$), so $\log \abs{S} = \log n - O(\sqrt{\log n}) = (1 - o(1))\log n$, as desired. 
\end{document}